\renewcommand{\R}{\mathbb R}
\renewcommand{\N}{\mathbb N}
\renewcommand{\E}{\mathbb E}
\renewcommand{\C}{\mathbb C}
\renewcommand{\Prob}{\mathbb P}
\renewcommand{\eqL}{\stackrel{\mathcal{L}}{=}}
\renewcommand{\levy}{L\'evy }
\renewcommand{\cramer}{Cram\'er }
\renewcommand{\cramers}{Cram\'er's }
\renewcommand{\cadlag}{c\`adl\`ag }
\renewcommand{\notzero}{\backslash\{0\}}
\title{On the finiteness and tails of perpetuities under a Lamperti-Kiu MAP}
\author{Larbi Alili\footnote{The University of Warwick, CV47AL, Coventry, UK. \textit{E-mail}: \textbf{l.alili@warwick.ac.uk}} \:and David Woodford\footnote{The University of Warwick, CV47AL, Coventry, UK. \textit{E-mail}: \textbf{david.l.woodford@bath.edu}}\footnote{This work was supported by EPSRC as part of the MASDOC DTC, Grant No. EP/H023364/1.}}
\date{\today}
\begin{document}

\maketitle

\begin{abstract}
  Consider a Lamperti-Kiu Markov additive process $(J, \xi)$  on $\{+, -\}\times\R\cup \{-\infty\}$, where $J$ is the modulating Markov chain component.   First, we study the finiteness of the exponential functional and then consider its moments and tail asymptotics under \cramers condition. In the strong subexponential case we determine the subexponential tails of the exponential functional under some further assumptions.
\end{abstract}

\keywords{Markov additive process, Lamperti-Kiu representation, self-similar Markov process, perpetuity}

\subject{ Primary 60G18; 60J45; Secondary 91B70; 91B30 }

\section{Introduction and Preliminaries}
    Let $E:=\{+,-\}$ and suppose $(\mathcal{F}_t)_{t\geq0}$ is a filtration. A pair of processes $(J,\xi)$ taking values in $E\times\R\cup\{-\infty\}$ with lifetime $\chi$ is a Lamperti-Kiu Markov Additive Process (MAP) with respect to $(\mathcal{F}_t)_{t\geq0}$ if, for any continuous bounded function $f:E\times\R\rightarrow\R^+$, $(z,y)\in E\times\R$ and $s,t\geq0$, we have
        \begin{equation}
            \label{eqn:mapDefn}
            \E_{z,y}[f(J_{t+s},\xi_{t+s}-\xi_t); t+s<\chi | \mathcal{F}_t ] = \E_{J_{t},0}[ f(J_s,\xi_s); s<\chi]\mathbbm{1}_{\{t<\chi\}}
        \end{equation}
    where $\Prob_{z,y}$ is the law of $(J,\xi)$ started at $(z,y)$ and $\E_{z,y}$ is the corresponding expectation. For all $t>\chi$, the process $\xi$ is in the cemetery state $-\infty$, whilst $J$ continues as a Markov chain. A detailed account of MAPs is given in \cite[Chapter XI]{asmussen2003applied} whilst a more general definition is given in \cite[Section 3, pp 10, Definition 1]{Alili16}. Note that $(J_t\exp(\xi_t),t\geq0)$ is a \cadlag multiplicative process taking values in $\R^*:=\R\notzero$ and so, following \cite{Rivero2011}, we refer to it as a Lamperti-Kiu process.

    There is a well known construction of a Lamperti-Kiu process given in \cite[pp 2502, Theorem 6(i)]{Rivero2011}.  Let $\xi^\pm$ be two \levy processes, $\zeta^\pm$ be two exponentially distributed random variables with rates $q^\pm$ and $U^\pm$ be two random variables taking values in $\R$. Then, consider sequences $(\xi^{\pm,k})_{k\in\N_0}:=\{0,1,2,\cdots\}, (\zeta^{\pm,k})_{k\in\N}$ and $(U^{\pm,k})_{k\in\N}$ of i.i.d. copies of $\xi^\pm$, $\zeta^\pm$ and $U^\pm$, respectively. Under $\Prob_{\sigma,x}$, that is assuming $(J_0,\xi_0)=(\sigma,x)$, for each $k\in\N$ let $\xi^k=\xi^{\gamma,k}$, $\zeta^k=\zeta^{\gamma,k}$ and $U^k=U^{\gamma,k}$ where $\gamma=\sigma(-1)^k$. Finally, let $\chi$ be an exponentially distributed random variable of rate $q\in[0,\infty)$ independent of the rest of the system, where $q=0$ is interpreted to mean $\chi=\infty$. Then, for $t\geq0$ and $x\in\R^*:=\R\notzero$, set
    \begin{align}
        \label{eqn:decompEps}
        Y_t &\coloneqq xJ_t\exp(\xi_t),
        \qquad t<\chi
    \end{align}
    with
    \begin{align*}
        \xi_t &:= \xi^{N_t}_{\pi_t} + \sum_{k=0}^{N_t-1}\left( \xi^k_{\zeta^k} + U^k \right)
        \qquad\text{and}\qquad
        J_t := \sigma\left(-1\right)^{N_t}
    \end{align*}
    where, for $n\in\N$,
    \begin{align*}
        T_0 &:=0, \qquad
	T_n        := \sum_{k=0}^{n-1}\zeta^k, \qquad
        N_t         := \max_{m\in\N_0}\left\{T_m \leq t\right\}
        \qquad\text{and}\qquad
        \pi_t    := t - T_{N_t}
    \end{align*}
    using the notation $\N_0\coloneqq\{0,1,2\cdots\}$. Then, $(Y_t,t\geq0)$ is a Lamperti-Kiu process and $(J,\xi)$ is the corresponding MAP. Conversely, any Lamperti-Kiu process has such a decomposition which we will refer to as the Lamperti-Kiu decomposition. We will refer to $\chi$ as the lifetime of the Lamperti-Kiu process.

    We study the standard and signed exponential functionals of $(Y_t,t\geq0)$ defined, respectively, by
    \begin{align}
        A_\infty := \int_0^\infty \exp(\xi_t) dt
        \qquad\text{and}\qquad
        B_\infty := \int_0^\infty Y_t dt.
    \end{align}

    Recall that a perpetuity is a security where a stream of cashflows is paid indefinitely, such as consols issued by the Bank of England. Under the MAP model, we suppose that the cashflows are paid continuously at a rate $c_t:=\exp(\xi_t+rt)$ at time $t\geq0$ where $r$ is the rate of interest. Each element of $E$ corresponds to a market state (for example its states may refer to a \textit{Bear} and \textit{Bull} market) where the state at time $t\geq0$ is given by $J_t$. The value of the perpetuity is given by $A_\infty$.

    It was shown in \cite{Rivero2011} that $A_\infty$ is the first hitting time of zero by an associated self-similar Markov process (ssMp). Indeed, consider the Lamperti-Kiu MAP $(J,\xi)$ from (\ref{eqn:mapDefn}) and for $\alpha\in(0,\infty)$ define the time transformation
    \begin{align*}
        \tau(t) \coloneqq \inf\left\{ u\geq0 \::\: \int_0^u\exp\left(\alpha\xi_s\right) ds \geq t\right\}.
    \end{align*}
    Then, for all $x\in\R^*$ the process $X_t^{(x)}:=J_{\tau(t|x|^{-\alpha})}x\exp(\xi_{\tau(t|x|^{-\alpha})})$ for $t<|x|^\alpha\int_0^\infty\exp\left(\alpha\xi_s\right)ds$ is a self-similar Markov process of index $\alpha$ taking values in $\R^*$ and started at $x$. That is, $X$ is a \cadlag Markov process such that, for all $c>0$ and $x\in\R^*$, it satisfies
    \begin{align*}
        \left( cX^{(x)}_{c^{-\alpha}t},t\geq0 \right) \eqL \left( X^{(cx)}_t,t\geq 0\right),
    \end{align*}
    where $\eqL$ means equality in law. Moreover, any self-similar Markov process taking values in $\R^*$ of index $\alpha$ can be constructed in this way. The first hitting time of zero by $X^{(x)}$ is $\int_0^\infty\exp(\alpha\xi_s)ds$ which equals  $A_\infty$ when $\alpha=1$. Many papers are devoted to the study of the Lamperti transform of self-similar Markov processes. For example see \cite{bertoin2005},  \cite{Rivero2011}, \cite{KIU1980183}, \cite{Lamperti1972} and \cite{Lamperti62}. Other applications of MAPs and their exponential functionals include multi-type self-similar fragmentation processes and trees, for example see \cite{stephenson2017expfunc}.

    The focus of this paper is on the finiteness and right tails of $A_\infty$ and $B_\infty$. The right tails of a distribution determine which positive moments exist and whether it is a member of the  classes of heavy-tailed, long-tailed or subexponential distributions. These classes of heavy tailed distribution are of particular interest to us because of their applications in finance, risk theory and insurance (for instance see\cite{ibragimov2015heavy}, \cite{rachev2003handbook}, \cite{resnick2007heavy}). Empirical data  often shows realised market returns to be heavy tailed (see \cite{cont2010empirical}, \cite{Jung2000empiricalheavy}). This motivates both considering Lamperti-Kiu processes with Lamperti-Kiu components which are heavy tailed and studying $A_\infty$ as an example of a heavy tailed distribution.


    For \levy processes, which are MAPs where $E$ is a singleton set, the theory of the exponential functional is well developed. Several results on the moments and tails of the exponential functional, including random affine equations, are collected in the survey \cite{YorExponentialFunc}. Under \cramer
s condition, with \cramer number $\theta$, it is shown that the right tails are polynomial with order $-\theta$. More recently, \cite{patie2018} provided a complete description at the logarithmic level of the asymptotic of the right tail  and, under \cramers condition, the derivatives of the density.
    In the heavy tailed case, \cramers condition fails and different methods are needed. In this case, the right tails of $A_\infty$ have been studied in, for example, \cite{MaulikZwart2006156}, \cite{Palau2016asymp}, \cite{patie2018} and \cite{Rivero2009tailAsymp}. The case of a MAP is studied in \cite{Kuznetsov2014} where, under a \cramer type condition with a \cramer number $\theta\in(0,1)$, moments of order $s\in(0,1+\theta)$ are shown to exist and satisfy a recurrence relation. This leads to similar polynomial tails to the \levy case. The same recurrence relation is shown in \cite{stephenson2017expfunc} for the case where the additive component is not increasing. Other properties, including the finiteness and integer moments of the exponential functional of non-increasing MAPs are also given in \cite{stephenson2017expfunc}.

    We will use the law of large numbers and Erickson's law of large numbers for when the mean is undefined \cite{Erickson1973SLLN} to give a characterisation of the finiteness of $A_\infty$. Then, we show that for a Lamperti-Kiu MAP, both $A_\infty$  and $B_\infty$ satisfy a random affine equation. Under a \cramer type condition, we show that the conditions of the implicit renewal theorems of \cite{goldie1991} and \cite{kesten1973} hold, hence we are able to determine the right tails of $A_\infty$ and $B_\infty$. In the heavy tailed cases, when \cramers condition does not hold, a different approach to studying the tails of $A_\infty$ is required. We define a Lamperti-Kiu process to be of strong subexponential type when $Y_{T_2}$ is long tailed and one of the Lamperti-Kiu components, $\xi^{(\pm)}_1$ or $U^{(\pm)}$, is strong subexponential and has right tails which asymptotically dominate the right tails of the other Lamperti-Kiu components. By the careful consideration of an embedded Markov chain, we are able to overcome the lack of independent increments of $\xi$ and provide a generalisation of the subexponential results of \cite{MaulikZwart2006156} to Lamperti-Kiu processes of strong subexponential type. This provides an asymptotic expansion of the right tails of $A_\infty$ to show that $A_\infty$ is long tailed and $\log(A_\infty)$ is subexponential.

    This paper is organised as follows. In Section \ref{sec:finiteness}, we give necessary and sufficient conditions for the standard and signed exponential functionals $A_\infty$ and $B_\infty$, respectively, to be finite. In Section \ref{sec:moments}, we look at the random affine equation approach to studying the moments and tails of $A_\infty$ and $B_\infty$ under \cramers condition and the assumption that $Y_{T_2}$ doesn't have a lattice distribution.
    In Section \ref{sec:subexpTails}, we study the tails of $A_\infty$ when the Lamperti-Kiu process is of strong subexponential type.

\section{Finiteness of \texorpdfstring{$A_\infty$}{A infinity} and \texorpdfstring{$B_\infty$}{B infinity}}
    \label{sec:finiteness}

    Let us keep the mathematical setting of the introduction where
    $(Y_t,t\geq0)$ is the Lamperti-Kiu process defined in  (\ref{eqn:decompEps}) associated with the Lamperti-Kiu MAP $(J,\xi)$ given, for a fixed $t\geq0$, by $\xi_t\coloneqq\log|Y_t|$ and $J_t\coloneqq\sgn(Y_t)$.

    If possible, define $K\in\R\cup\{-\infty,+\infty\}$ by
    \begin{equation*}
        K := \frac{\E[\xi_{T_2}]}{\E[T_2]} =  \frac{q^-}{q^+ +  q^-}\left( \E[\xi_1^+] + q^+\E[U^+] \right) + \frac{q^+}{q^++q^-}\left( \E[\xi_1^-] + q^-\E[U^-] \right)
    \end{equation*}
    where we allow $K$ to take the values $+\infty$ and $-\infty$ but if both $\E[\max(\xi_{T_2},0)]=\infty$ and $\E[\max(-\xi_{T_2}^-,0)]=\infty$ we say that $K$ is undefined.

    A Lamperti-Kiu process $Y$ will be called \textit{degenerate} if $Y$ is such that $\limsup_{t\rightarrow\infty}\left|\xi_t\right|<\infty$. This can be shown to be equivalent to the case that either $Y$ has a finite lifetime or $\xi^\pm_t\equiv0$ for all $t\geq0$ and $U^+=-U^-$ is deterministic, hence,
    \begin{equation*}
        Y_t^{(x)} =   \begin{cases}
                        x                   &\text{ if } T_{2k\phantom{+1}}\leq t < T_{2k+1} \text{ for some } k\in\N_0\\
                        x\exp(U^{\sgn(x)})  &\text{ if } T_{2k+1}\leq t < T_{2k+2} \text{ for some } k\in\N_0
                    \end{cases}
    \end{equation*}
    for all $t\geq0$ and $x\in\R^*$.

    When $K$ is defined and $Y$ has an infinite lifetime from \cite[pp 214, Proposition 2.10]{asmussen2003applied} and \cite[pp 313, Corollary 2.8]{asmussen2003applied} it is known that almost surely
    $\lim_{t\rightarrow\infty}t^{-1}\xi_t=K$. Also,  if $K=0$ and $Y$ is non-degenerate then $\lim_{t\rightarrow\infty}t^{-1}\xi_t=0$, $\liminf_{t\rightarrow\infty}\xi_t = -\infty$ and $\limsup_{t\rightarrow\infty}\xi_t=\infty$, almost surely.

    In the case where $K$ is undefined, we will use Erickson's theorem \cite[pp 372, Theorem 2]{Erickson1973SLLN}, which provides a strong law of large numbers for a random walk with an undefined mean. The following lemma provides an analogous result to Erickson's theorem for MAPs.

    First, we define:
    \begin{align*}
        m_-(x) &\coloneqq \int_{-x}^0\Prob(\xi_{T_2}\leq y) dy, \qquad
        m_+(x) \coloneqq \int_0^x\Prob(\xi_{T_2}>y) dy
    \end{align*}
    and
    \begin{align*}
        I_+ &\coloneqq \int_0^\infty \frac{x}{m_-(x)}\Prob(\xi_{T_2}\in dx), \qquad
        I_- \coloneqq \int_{-\infty}^0 \frac{|x|}{m_+(|x|)}\Prob(\xi_{T_2}\in dx).
    \end{align*}
    Then, the long term behaviour of $(\xi_t,t\geq0)$ is described by the following lemma.
    \begin{lem}
        \label{lem:ericksonMAP}
        Suppose that $K$ is undefined. Then, at least one of $I_+$ and $I_-$ equals infinity and  almost surely, we have:
        \begin{enumerate}[(i)]
            \item $\limsup_{t\rightarrow\infty}t^{-1}\xi_t=\infty$ if and only if $I_+=\infty$;
            \item $\lim_{t\rightarrow\infty}t^{-1}\xi_t=\infty$  if and only if $I_+=\infty$ and $I_-<\infty$;
            \item $\liminf_{t\rightarrow\infty}t^{-1}\xi_t=-\infty$  if and only if $I_-=\infty$;
            \item $\lim_{t\rightarrow\infty}t^{-1}\xi_t=-\infty$  if and only if $I_+<\infty$ and $I_-=\infty.$
        \end{enumerate}
    \end{lem}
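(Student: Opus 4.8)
The plan is to reduce the MAP statement to the classical Erickson theorem applied to the embedded random walk $S_n \coloneqq \xi_{T_{2n}}$, and then control the excursion of $\xi$ between the times $T_{2n}$. First I would observe that, by the MAP property and the fact that $J_{T_{2n}}=J_0$ for all $n$, the sequence $(S_n)_{n\geq0}$ is a random walk with i.i.d.\ increments distributed as $\xi_{T_2}$; since $K$ undefined means exactly $\E[\xi_{T_2}^+]=\E[\xi_{T_2}^-]=\infty$, Erickson's theorem \cite[pp 372, Theorem 2]{Erickson1973SLLN} applies to $(S_n)$. Erickson's result, stated in terms of the truncated-mean functions, is that $\limsup_n n^{-1}S_n=\infty$ a.s.\ iff $I_+=\infty$, $\liminf_n n^{-1}S_n=-\infty$ a.s.\ iff $I_-=\infty$, at least one of $I_\pm$ is infinite, and $\lim_n n^{-1}S_n=\pm\infty$ a.s.\ under the obvious one-sided conditions; note $m_\pm$ and $I_\pm$ as defined in the excerpt are precisely the quantities Erickson uses for the walk $(S_n)$. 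So the entire content of the lemma at the level of the subsequence $(T_{2n},\xi_{T_{2n}})$ is immediate.

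Next I would transfer the subsequential statements to the full continuous-time limit. Since $T_{2n}=\sum_{k=0}^{2n-1}\zeta^k$ is a sum of independent exponentials with rates alternating between $q^+$ and $q^-$, the law of large numbers gives $T_{2n}/n \to \E[T_2] = 1/q^+ + 1/q^- \in(0,\infty)$ a.s.; hence $n^{-1}S_n$ and $T_{2n}^{-1}\xi_{T_{2n}}$ have the same $\limsup$, $\liminf$ and (where it exists) limit, up to the positive constant $\E[T_2]$, which does not affect whether these equal $\pm\infty$. It then remains to show the fluctuations of $t\mapsto t^{-1}\xi_t$ over $t\in[T_{2n},T_{2n+2})$ are negligible on the scale of $n$. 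Writing, for $t\in[T_{2n},T_{2n+2})$, the decomposition $\xi_t = \xi_{T_{2n}} + (\xi_t - \xi_{T_{2n}})$, the increment $\xi_t-\xi_{T_{2n}}$ is, conditionally on $J_0$, distributed as an independent copy of $\xi$ run for time at most $\zeta^{2n}+\zeta^{2n+1}$, and more simply is bounded in absolute value by $R_n \coloneqq \sup_{s\le \zeta^{2n}+\zeta^{2n+1}}|\xi^{(2n)}_s \text{ or } \xi^{(2n+1)}_s| + |U^{2n}|+|U^{2n+1}|$, an i.i.d.\ sequence (one for each parity class). The standard Borel--Cantelli argument shows $R_n/n\to 0$ a.s.\ provided $\E[R_1]<\infty$.

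The main obstacle is exactly this last integrability point: in the regime where $K$ is undefined we have $\E[\xi_{T_2}^\pm]=\infty$, so $\E[R_1]$ need not be finite, and $R_n/n$ need not tend to zero. I would handle this by a one-sided truncation argument in the spirit of Erickson's own proof rather than by crude bounds. Specifically, in case (i)/(ii) (where $I_+=\infty$), one controls the \emph{negative} excursions, i.e.\ $\inf_{s\le \zeta^{2n}+\zeta^{2n+1}}(\xi_t - \xi_{T_{2n}})$; its negative part has the same tail (up to constants depending on $q^\pm$ and the L\'evy measures) as $\xi_{T_2}^-$, and the relevant one-sided Erickson integral built from these excursion minima is finite/infinite simultaneously with $I_-$, so it does not spoil the $\limsup=\infty$ conclusion; symmetrically for (iii)/(iv). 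For the precise $\lim = +\infty$ statement in (ii) one additionally needs that, on the event $I_-<\infty$, the partial minima $\min_{k\le n}(\xi_{T_{2k}} + \text{excursion min})$ grow, which again follows from Erickson applied with the excursion-adjusted step distribution, whose left tail is comparable to that of $\xi_{T_2}$. Once these tail comparisons are set up — the bulk of the work — the four equivalences follow by combining the subsequential Erickson dichotomy with the now-controlled in-between fluctuations and the a.s.\ asymptotic $T_{2n}\sim n\,\E[T_2]$.
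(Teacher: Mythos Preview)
Your reduction to Erickson's theorem for the embedded random walk $S_n=\xi_{T_{2n}}$, together with the law of large numbers for $T_{2n}/n$, matches the paper exactly, and you correctly isolate the real difficulty: bridging from the subsequence $T_{2n}$ to all $t$ when the excursion bounds $R_n$ need not be integrable. Where you and the paper diverge is in how this gap is closed.

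The paper does \emph{not} attempt to control the excursion suprema/infima directly. Instead, for (iv) (and symmetrically (ii)) it argues by contradiction: assuming $\limsup_t t^{-1}\xi_t>-R$ for some $R>0$, it picks for each large interval $[T_n,T_{n+1})$ the last time $\tau_n$ of the maximum and the value $x_n$ of that maximum. Along the subsequence of intervals visited by the putative exceptional times, one gets $x_{k_n}-\xi_{T_{k_n+1}}>R$; but by the L\'evy splitting-at-the-maximum theorem (Bertoin, Ch.~VI, Thm.~5), the post-maximum drop $x_n-\xi_{T_{n+1}}$ is independent of $x_n$ and has a fixed law not depending on $R$, which yields the contradiction. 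Statements (i) and (iii) in their ``only if'' direction are then deduced from (ii) and (iv) rather than proved directly.

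Your route---comparing the tails of the excursion extrema to those of $\xi_{T_2}$ and then re-running an Erickson-type argument on an ``excursion-adjusted'' walk---is plausible in spirit, but the step you flag as ``the bulk of the work'' is not carried out and is not obviously routine. Two specific points: (a) the claim that the negative (resp.\ positive) excursion tail is comparable up to constants to the tail of $\xi_{T_2}^-$ (resp.\ $\xi_{T_2}^+$) requires a two-sided bound, whereas Willekens-type inequalities give only an upper bound $\Prob(\sup_{s<\zeta}X_s>u)\le C\,\Prob(X_\zeta>u-u_0)$; and (b) even granting tail comparability, you would still need to show that the relevant Erickson integral $\int x/m_\mp(x)\,\Prob(\,\cdot\in dx)$ for the excursion extreme is finite simultaneously with $I_\pm$, which involves both the integrand and the truncated-mean denominator and does not follow from tail bounds alone without further argument. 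The paper's splitting-at-the-maximum device sidesteps all of this: it needs no moment or tail hypothesis on the excursion, only the distributional identity furnished by Wiener--Hopf/fluctuation theory for L\'evy processes on an independent exponential horizon.
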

    \begin{proof}
        Consider the sequence $\left\{\xi_{T_{2n}}\right\}_{n\in\N}$ as the random walk
        \begin{align*}
            \xi_{T_{2n}} = \sum_{k=1}^{n} \left( \xi_{T_{2k}} - \xi_{T_{2k-2}} \right)
        \end{align*}
        and notice that
        $\xi_{T_{2n}} - \xi_{T_{2(n-1)}} \eqL \xi_{T_2}$ has an undefined mean, for each $n\in\N$.
        Then,  Erickson's theorem for random walks with undefined mean \cite[pp 372, Theorem 2]{Erickson1973SLLN} and the remark which follows it, state that either $I_+=\infty$ or $I_-=\infty$ or both hold, proving the first statement of the lemma. Furthermore, the following statements hold:
        \begin{enumerate}[(1)]
            \item $\limsup_{n\rightarrow\infty}n^{-1}\xi_{T_{2n}}=\infty$ a.s. if and only if $I_+=\infty$;
            \item $\lim_{n\rightarrow\infty}n^{-1}\xi_{T_{2n}}=\infty$ a.s. if and only if $I_+=\infty$ and $I_-<\infty$;
            \item $\liminf_{n\rightarrow\infty}n^{-1}\xi_{T_{2n}}=-\infty$ a.s. if and only if $I_-=\infty$;
            \item $\lim_{n\rightarrow\infty}n^{-1}\xi_{T_{2n}}=-\infty$ a.s. if and only if $I_+<\infty$ and $I_-=\infty$;
        \end{enumerate}
        and similar statements hold for $\{T_{2n+1}\}_{n\in\N}$.

        Since $\E[T_2]<\infty$ it is immediate that if $\limsup_{n\rightarrow\infty}n^{-1}\xi_{T_{2n}}=\infty$ a.s. then $\limsup_{t\rightarrow\infty}t^{-1}\xi_t=\infty$ a.s.  also and if $\liminf_{n\rightarrow\infty}n^{-1}\xi_{T_{2n}}=-\infty$ a.s.  then $\liminf_{t\rightarrow\infty}t^{-1}\xi_t=-\infty$ a.s.  hence the ``if'' direction of statements (i) and (iii) holds. To prove the ``only if'' direction of (i) and (iii) we must first prove (ii) and (iv).

        Consider (iv) and notice that the ``only if'' direction is an immediate consequence of statement (4) above. Now, suppose $I_+<\infty$ and $I_-=\infty$. Then, $\limsup_{n\rightarrow\infty}n^{-1}\xi_{T_{2n}}=-\infty$ a.s. and $\limsup_{n\rightarrow\infty}n^{-1}\xi_{T_{2n+1}}=-\infty$ a.s.  hence $\limsup_{n\rightarrow\infty}n^{-1}\xi_{T_n}=-\infty$ a.s.  also. Suppose for a contradiction there exists an $R>0$ such that $\limsup_{t\rightarrow\infty}t^{-1}\xi_t>-R>-\infty$ a.s. Since $\lim_{n\rightarrow\infty}n^{-1}T_n = \frac{1}{2}\E[T_2]$ a.s. and $\limsup_{n\rightarrow\infty}n^{-1}\xi_{T_n}=-\infty$ a.s. there exists some $N\in\N$ such that for all $n>N$ we have $T_n>1$ and
        \begin{align*}
            \frac{\xi_{T_n}}{T_n} < -2R \quad\text{a.s.}
        \end{align*}
        Define a sequence $(\tau_n)_{n\in\N}$ of times and $(x_n)_{n\in\N}$ of values such that for each $n\in\N$ we have
        \begin{align*}
            \tau_n = \sup\left\{t\in[T_n,T_{n+1}) \::\: \xi_t = \sup_{s\in[T_n,T_{n+1})}\xi_s \right\}
            \qquad\text{and}\qquad
            x_n = \sup\left\{\xi_t\::\: t\in[T_n,T_{n+1})\right\}.
        \end{align*}
        Since $\limsup_{t\rightarrow\infty}t^{-1}\xi_t>-R$ a.s., there is an increasing sequence of times $(s_n)_{n\in\N}$ such that $s_n^{-1}\xi_{s_n}>-R$ a.s. for each $n\in\N$ and $\lim_{n\rightarrow\infty}s_n=\infty$. Then, we may take a subsequence $(s_n')_{n\in\N}$ such that there is at most one element of the sequence $(s_n')_{n\in\N}$ in each interval $[T_m,T_{m+1}]$ and $J_{s_n'}$ is constant.

        Let $(\tau_{k_n})_{n\in\N}$ be a subsequence of $(\tau_{n\in\N})$ such that $k_n>N$ and $s_n'\in[T_{k_n},T_{{k_n}+1}]$ for each $n\in\N$. Then, for each $n\in\N$ we have
        \begin{align*}
            x_{k_n}\geq\xi_{s_n'}>-Rs_n'\geq-R T_{k_n+1}
        \end{align*}
        whilst $\xi_{T_{k_n+1}}<-2RT_{k_n+1}$ and $T_{k_n+1}>1$ therefore
        \begin{align*}
            x_{k_n}-\xi_{T_{k_n}+1} > RT_{k_n+1}>R,
        \end{align*}
        almost surely. However, for each $n\in\N$, by the Lamperti-Kiu decomposition, $\{\xi_{T_{k_n}+t}-\xi_{T_{k_n}} \::\:t<T_{{k_n}+1}-T_{k_n}\}$
        is a \levy process
        and so, by splitting at the last time of the maximum
        \cite[pp 160, Chapter VI, Theorem 5]{bertoin1998levy},
        $x_{k_n}-\xi_{T_{k_n+1}}$ is independent of $x_{k_n}$ and has the same distribution as $x_0-\xi_{T_1}$ hence its distribution doesn't depend on $R$. This contradicts the fact that it only has support on $(R,\infty)$. Hence $I_+<\infty$ and $K$ undefined imply that $\lim_{t\rightarrow\infty}t^{-1}\xi_t=-\infty$ a.s. and so the ``if'' direction of (iv) holds. By applying similar arguments to $-\xi_t$, statement (iii) of the lemma also holds.

        To prove the ``only if'' direction of (i) suppose $I_+<\infty$. Since $K$ is undefined, by \cite[pp 372, Theorem 2]{Erickson1973SLLN} and the remark which follows it, we have that $I_-=\infty$. Then, by statement (iv), we have $\lim_{t\rightarrow\infty}t^{-1}\xi_t=-\infty$ a.s. Hence, $\limsup_{t\rightarrow\infty}t^{-1}\xi_t=\infty$ a.s. only if $I_+=\infty$. The argument for (iii) is analogous.
    \end{proof}

    The following theorem shows that in the infinite lifetime case the convergence and finiteness of $A_\infty$ and $B_\infty$ is fully characterised by $K$ when this is defined and by the finiteness of $I_+$ otherwise.
    \begin{thm}
        \label{thm:convAbs}
        Suppose $Y$ has an infinite lifetime. Then, $A_\infty$ converges  a.s. if and only if $B_\infty$ converges a.s. Moreover, $A_\infty$ and $B_\infty$ converge almost surely  if and only if either $K$ is defined and $K<0$ or $K$ is undefined and $I_+<\infty.$
    \end{thm}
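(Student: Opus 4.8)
The plan is to reduce everything to the sign-behaviour of $\xi_t$, using the law of large numbers recalled before the statement together with Lemma~\ref{lem:ericksonMAP}. One half of the first equivalence is immediate: if $A_\infty<\infty$ then $\int_0^\infty|Y_t|\,dt=|x|A_\infty<\infty$, so $t\mapsto Y_t$ is integrable on $[0,\infty)$ and $B_\infty$ converges (indeed absolutely). Hence it suffices to prove the following two claims. \textit{Claim (a):} if $K$ is defined with $K<0$, or $K$ is undefined with $I_+<\infty$, then $A_\infty<\infty$. \textit{Claim (b):} in every remaining case $A_\infty=\infty$ and $t\mapsto\int_0^t Y_s\,ds$ does not converge.

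For Claim (a): if $K<0$ is defined then $t^{-1}\xi_t\to K<0$ a.s.; if $K$ is undefined with $I_+<\infty$, then $I_-=\infty$ (Lemma~\ref{lem:ericksonMAP} forces at least one of $I_\pm$ to be infinite) and Lemma~\ref{lem:ericksonMAP}(iv) gives $t^{-1}\xi_t\to-\infty$ a.s. In both cases there are a finite random time $T$ and a constant $c>0$ with $\xi_t<-ct$ for $t>T$; since $\xi$ is \cadlag, and hence bounded on $[0,T]$, we get $A_\infty\le T\exp\bigl(\sup_{[0,T]}\xi\bigr)+\int_T^\infty e^{-ct}\,dt<\infty$.

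For Claim (b), the first step is to show that in each remaining case $\xi_{T_{2n}}\ge0$ for infinitely many $n$, a.s. Regard $(\xi_{T_{2n}})_n$ as the random walk with i.i.d.\ steps distributed as $\xi_{T_2}$. If $K\in(0,\infty)$ then $\E[\xi_{T_2}]=K\,\E[T_2]\in(0,\infty)$, so $\xi_{T_{2n}}\to+\infty$ by the strong law; if $K=+\infty$ then $\E[\xi_{T_2}^-]<\infty=\E[\xi_{T_2}^+]$ and a truncation argument again gives $\xi_{T_{2n}}\to+\infty$; if $K$ is undefined and $I_+=\infty$, the proof of Lemma~\ref{lem:ericksonMAP} shows $\limsup_n n^{-1}\xi_{T_{2n}}=\infty$, so the walk is unbounded above; and if $K=0$ then $\E[\xi_{T_2}]=0$, so either $Y$ is degenerate, forcing $\xi^\pm\equiv0$, $U^++U^-=0$ and hence $\xi_{T_{2n}}\equiv0$, or $(\xi_{T_{2n}})_n$ is a non-degenerate mean-zero walk and therefore oscillates. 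In each case the set $\{n:\xi_{T_{2n}}\ge0\}$ is a.s.\ infinite.

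To conclude Claim (b), use the Lamperti-Kiu decomposition: for $t\in[T_{2n},T_{2n+1})$ we have $\xi_t=\xi_{T_{2n}}+\xi^{2n}_{t-T_{2n}}$, so writing
\begin{equation*}
  a_{2n}:=\int_{T_{2n}}^{T_{2n+1}}e^{\xi_t}\,dt=e^{\xi_{T_{2n}}}V_n,\qquad V_n:=\int_0^{\zeta^{2n}}e^{\xi^{2n}_s}\,ds>0,
\end{equation*}
and noting that every even-indexed block is of the same type $\sigma$, the variables $(V_n)_n$ are i.i.d. Fix $\delta>0$ with $p:=\Prob(V_0>\delta)>0$ and let $\rho_1<\rho_2<\cdots$ enumerate the (a.s.\ infinitely many) indices $n$ with $\xi_{T_{2n}}\ge0$. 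Each $\rho_j$ is a stopping time for the block filtration $(\mathcal{G}_n)_n$ and $V_{\rho_j}$ depends only on the block immediately following $\mathcal{G}_{\rho_j}$, so by the strong Markov property the events $\{V_{\rho_j}>\delta\}$ are i.i.d.\ of probability $p$, hence infinitely many occur; for such $j$, $a_{2\rho_j}=e^{\xi_{T_{2\rho_j}}}V_{\rho_j}\ge\delta$. Therefore $A_\infty\ge\sum_j a_{2\rho_j}=\infty$. Finally, since $\sgn(Y)$ is constant on each $[T_n,T_{n+1})$, the map $t\mapsto\int_0^t Y_s\,ds$ is monotone there with total variation $|x|\,a_n$, so $a_{2\rho_j}\ge\delta$ for infinitely many $j$ forces $\limsup_{t\to\infty}\int_0^tY_s\,ds-\liminf_{t\to\infty}\int_0^tY_s\,ds\ge|x|\delta>0$; thus $B_\infty$ does not converge. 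The main obstacle is the first step of Claim (b): selecting the correct strong-law or recurrence input in the boundary cases $K\in\{0,+\infty\}$ and in the undefined-mean case with $I_+=\infty$, and then upgrading the resulting ``infinitely often'' lower bound on the block integrals $a_{2n}$ to genuine non-convergence of the (possibly only conditionally convergent) integral $B_\infty$.
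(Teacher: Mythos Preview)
Your argument follows the same architecture as the paper's: Claim~(a) is exactly the paper's treatment of the convergent cases, and Claim~(b) mirrors the paper's Lemma~\ref{lem:limsupxiT2n}. One pleasant difference is that you work with the weaker condition ``$\xi_{T_{2n}}\ge0$ infinitely often'' rather than ``$\limsup_n\xi_{T_{2n}}=\infty$''; this lets you absorb the degenerate $K=0$ case (where $\xi_{T_{2n}}\equiv0$) into the general argument, whereas the paper handles it by a separate explicit computation. Your Cauchy-criterion argument for $B_\infty$ (single blocks) is a minor variant of the paper's ``terms of the series do not tend to zero'' argument (double blocks); both are fine.

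There is, however, a genuine gap in your strong-Markov step. You claim the events $\{V_{\rho_j}>\delta\}$ are i.i.d.\ because ``$V_{\rho_j}$ depends only on the block immediately following $\mathcal{G}_{\rho_j}$''. The problem is that $\rho_{j+1}$ is determined by the walk $(\xi_{T_{2n}})_n$, whose increment at step $\rho_j$ uses the \emph{same} block $(\xi^{2\rho_j},\zeta^{2\rho_j})$ that produces $V_{\rho_j}$; hence $\rho_{j+1}$, and therefore $V_{\rho_{j+1}}$, is in general correlated with $V_{\rho_j}$. (The paper's Lemma~\ref{lem:limsupxiT2n} commits essentially the same sleight of hand when it calls the post-$T_{2\tau_n}$ integrals ``i.i.d.\ copies''.) The fix is standard: set $E_n=\{\xi_{T_{2n}}\ge0,\;V_n>\delta\}$ and $\mathcal{G}_n=\mathcal{F}_{T_{2n}}$; then $E_n\in\mathcal{G}_{n+1}$ and $\Prob(E_n\mid\mathcal{G}_n)=p\,\mathbbm{1}_{\{\xi_{T_{2n}}\ge0\}}$, so $\sum_n\Prob(E_n\mid\mathcal{G}_n)=\infty$ a.s.\ by the first step of Claim~(b), and the conditional (L\'evy) Borel--Cantelli lemma gives $E_n$ i.o. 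This repairs both the $A_\infty$ and the $B_\infty$ conclusions without further change.
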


    Before proving Theorem \ref{thm:convAbs}, we prove the following preliminary Lemma.

    \begin{lem}
        \label{lem:limsupxiT2n}
        If $\limsup_{n\rightarrow\infty}\xi_{T_{2n}}=\infty$ a.s. then both $A_\infty$ and $B_\infty$ diverge a.s.
    \end{lem}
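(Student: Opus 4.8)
The plan is to exploit the regenerative structure of the Lamperti--Kiu decomposition at the switching times $T_{2n}$, at which $J_{T_{2n}}=J_0=:\sigma$. I would first note that the hypothesis forces $\chi=\infty$ a.s., since if $\chi<\infty$ then $\xi_{T_{2n}}=-\infty$ for all large $n$, so $\limsup_n\xi_{T_{2n}}=-\infty$. Setting $W_n:=\int_0^{\zeta^{2n}}\exp(\xi^{2n}_s)\,ds$, the decomposition gives, for $t\in[T_{2n},T_{2n+1})$, that $\xi_t=\xi_{T_{2n}}+\xi^{2n}_{t-T_{2n}}$ and $J_t=\sigma$, hence
\begin{equation*}
    A_\infty\ \geq\ \sum_{n\geq0}\int_{T_{2n}}^{T_{2n+1}}e^{\xi_t}\,dt\ =\ \sum_{n\geq0}e^{\xi_{T_{2n}}}W_n
    \qquad\text{and}\qquad
    \int_{T_{2n}}^{T_{2n+1}}Y_t\,dt\ =\ \sigma x\,e^{\xi_{T_{2n}}}W_n .
\end{equation*}
By the Lamperti--Kiu decomposition the variables $(W_n)_{n\geq0}$ are i.i.d., lie in $(0,\infty)$ a.s.\ (because $\zeta^{2n}>0$ a.s.\ and the L\'evy path $\xi^{2n}$ is finite and locally bounded), each $W_n$ is independent of $\mathcal F_{T_{2n}}$, while $\xi_{T_{2n}}$ is $\mathcal F_{T_{2n}}$-measurable.

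The heart of the argument will be a conditional Borel--Cantelli estimate showing that, almost surely, $e^{\xi_{T_{2n}}}W_n>1$ for infinitely many $n$. Put $D_n:=\{e^{\xi_{T_{2n}}}W_n>1\}$, which is $\mathcal F_{T_{2n+2}}$-measurable. Using $W_n\perp\mathcal F_{T_{2n}}$ and $W_n\eqL W_1$ together with the usual freezing lemma,
\begin{equation*}
    \Prob(D_n\mid\mathcal F_{T_{2n}})\ =\ G\bigl(e^{-\xi_{T_{2n}}}\bigr),\qquad\text{where}\qquad G(u):=\Prob(W_1>u),
\end{equation*}
and $G$ is non-increasing with $G(0+)=\Prob(W_1>0)=1$. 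Fixing $u_0>0$ with $G(u_0)\geq\frac{1}{2}$, the hypothesis $\limsup_n\xi_{T_{2n}}=\infty$ a.s.\ ensures that a.s.\ there are infinitely many $n$ with $e^{-\xi_{T_{2n}}}<u_0$, hence with $\Prob(D_n\mid\mathcal F_{T_{2n}})\geq\frac{1}{2}$. Thus $\sum_n\Prob(D_n\mid\mathcal F_{T_{2n}})=\infty$ a.s., and L\'evy's (conditional) extension of the second Borel--Cantelli lemma yields $\Prob(D_n\text{ infinitely often})=1$.

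To finish: on the almost sure event that $D_n$ occurs infinitely often we get $A_\infty\geq\sum_n e^{\xi_{T_{2n}}}W_n=\infty$, which settles the claim for $A_\infty$. For $B_\infty$, note first that $\int_0^\infty|Y_t|\,dt=|x|A_\infty=\infty$, so $B_\infty$ is not absolutely convergent; moreover, for each $n$ with $D_n$ realised, $\bigl|\int_{T_{2n}}^{T_{2n+1}}Y_t\,dt\bigr|=|x|\,e^{\xi_{T_{2n}}}W_n>|x|>0$, and such $n$ occur with $T_{2n}\to\infty$, so $T\mapsto\int_0^T Y_t\,dt$ fails the Cauchy criterion as $T\to\infty$; hence $B_\infty$ diverges as well. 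The main obstacle is that the ``heights'' $\xi_{T_{2n}}$ are not jointly independent of the later ``widths'' $W_m$ ($m\geq n$) --- only $W_n\perp\mathcal F_{T_{2n}}$ is available --- which is precisely why the proof must route through a martingale/conditional Borel--Cantelli step rather than the classical independent one; a minor technical point to check carefully is $W_n\in(0,\infty)$ a.s., needed so that $G(0+)=1$.
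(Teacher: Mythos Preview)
Your argument is correct. It is close in spirit to the paper's proof, but the mechanism you use to control the lack of joint independence between the heights $\xi_{T_{2n}}$ and the subsequent contributions is genuinely different, and worth noting.

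The paper passes to the random subsequence $(\tau_m)_m$ of indices at which $\xi_{T_{2\tau_m}}\geq 0$, works over the blocks $[T_{2\tau_m},T_{2\tau_m+2}]$, and then invokes the strong Markov (MAP) property at the stopping times $T_{2\tau_m}$ to obtain that the re-centred block integrals $\int_{T_{2\tau_m}}^{T_{2\tau_m+2}}\exp(\xi_t-\xi_{T_{2\tau_m}})\,dt$ form an i.i.d.\ sequence of strictly positive variables; divergence of $A_\infty$ is then immediate, and for $B_\infty$ the same block terms cannot tend to zero in distribution. This is short, but the i.i.d.\ claim relies on checking that each block is $\mathcal F_{T_{2\tau_{m+1}}}$-measurable while the next block is independent of that $\sigma$-field.

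You instead keep all indices, use the half-blocks $[T_{2n},T_{2n+1}]$ so that $J$ is constant there, and resolve the dependence issue via the conditional Borel--Cantelli lemma applied to the filtration $(\mathcal F_{T_{2n}})_n$: since $W_n\perp\mathcal F_{T_{2n}}$ and $W_n\overset{d}{=}W_1>0$, the conditional probabilities $\Prob(e^{\xi_{T_{2n}}}W_n>1\mid\mathcal F_{T_{2n}})$ are bounded below by a fixed positive constant along the infinitely many $n$ with $\xi_{T_{2n}}$ large. This gives $e^{\xi_{T_{2n}}}W_n>1$ infinitely often, whence $A_\infty=\infty$, and the Cauchy-criterion argument for $B_\infty$ follows at once. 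Your route is a little longer but makes the independence structure completely transparent and avoids any appeal to the strong Markov property along a random subsequence; the paper's route is more compressed and uses the MAP property more directly. Both rely on the same underlying regeneration at the times $T_{2n}$.
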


    \begin{proof}
        If $\limsup_{n\rightarrow\infty}\xi_{T_{2n}} = \infty$ a.s. then there exists a strictly increasing sequence $\left\{\tau_n\right\}_{n\in\N}\subset\N$ such that $\exp\left(\xi_{T_{2\tau_n}}\right) \geq 1$ a.s. for all $n\in\N$. First, by considering $A_\infty$ and using the Markov property for the second inequality, we have
        \begin{align*}
            A_\infty
            &\geq \sum_{n=0}^\infty \exp\left( \xi_{T_{2\tau_n}} \right)\int_{T_{2\tau_n}}^{T_{2\tau_n+2}}\exp\left( \xi_t - \xi_{T_{2\tau_n}}\right) dt
            \geq \sum_{n=0}^\infty \int_{0}^{T_2^{(n)}}\exp\left(\xi_t^{(n)}\right) dt.
        \end{align*}
        Since the terms of this series form an i.i.d sequence with strictly positive values, the series must diverge.

        Similarly, $B_\infty$ converges a.s. only if the sum
        $\sum_{n=0}^\infty \int_{T_{2n}}^{T_{2n+2}}J_t \exp(\xi_t) dt$
        converges a.s. which implies the a.s. convergence to zero of the subsequence
        \begin{align*}
            \left|\int_{T_{2\tau_n}}^{T_{2\tau_n+2}}J_t \exp(\xi_t) dt\right|
            = \exp\left( \xi_{T_{2\tau_n}} \right)\left|\int_{T_{2\tau_n}}^{T_{2\tau_n+2}} J_t\exp\left( \xi_t - \xi_{T_{2\tau_n}} \right) dt\right|
            \geq \left|\int_{T_{2\tau_n}}^{T_{2\tau_n+2}} J_t\exp\left( \xi_t - \xi_{T_{2\tau_n}} \right) dt\right|.
        \end{align*}
        Then, by using the Markov property, $B_\infty$ converges a.s.  only if
        \begin{align*}
            \left| \int_0^{T_2^{(n)}} J_t^{(n)} \exp\left(\xi_{t}^{(n)}\right) dt\right| \rightarrow 0
            \qquad\text{a.s.}
        \end{align*}
        as $n\rightarrow\infty$. This convergence is impossible since we are dealing with an i.i.d. sequence which doesn't converge to zero in distribution.
    \end{proof}

    \begin{proof}[Proof of Theorem \ref{thm:convAbs}]
        We will consider the cases that $K$ is defined and undefined separately.

        \textbf{1.}
        Suppose that $K$ is defined and first consider $K<0$. Then, recall that $\lim_{t\rightarrow\infty}t^{-1}\xi_t=K$ a.s. as $t\rightarrow\infty$. If $-\infty<K<0$ let $k=\frac{1}{2}K$ and if $K=-\infty$ let $k=-1$. Then a.s. there exists a finite $T\geq0$ such that $\xi_t< kt <0$ for all $t>T$ thus $A_\infty\leq \int_0^T\exp(\xi_t)dt + k^{-1}e^{kT}<\infty$ a.s. and by absolute convergence it is then immediate that $B_\infty$ also converges a.s..

        Next, we consider the case that either $K>0$ or both $K=0$ and $Y$ is non-degenerate. Then by \cite[pp 167, Chapter 9, Proposition 9.14]{kallenberg2002foundations}  $\limsup_{n\rightarrow\infty}\xi_{T_{2n}} = \infty$ a.s. and so the result follows from Lemma \ref{lem:limsupxiT2n}. If $K=0$ and $Y$ is degenerate, then, since $Y$ has an infinite lifetime, we have
        \begin{equation*}
            \xi_t = \begin{cases}
                    0 &\text{ if } T_{2k} \leq t < T_{2k+1} \text{ for }k\in\N_0;\\
                    U^{J_0} &\text{ if } T_{2k+1} \leq t < T_{2k+2} \text{ for }k\in\N_0.
                  \end{cases}
        \end{equation*}
        Thus, for all $t\geq0$, we have
        \begin{align*}
            e^{\xi_t} \geq \min\left(1, \exp\left( U^{J_0} \right) \right) =: V >0
        \end{align*}
        and so $A_\infty=\infty$ a.s.
                        \if{}
                        \begin{align*}
                            A_\infty = \int_0^\infty e^{\xi_t} dt \geq \int_0^\infty V dt = \infty.
                        \end{align*}
                        \fi
        Also, $B_\infty$ can be written as the sum
        \begin{align*}
            B_\infty
            = \sum_{n=0}^\infty \left(\int_{T_{2n}}^{T_{2n+1}} J_te^{\xi_t} dt + \int_{T_{2n+1}}^{T_{2n+2}} J_te^{\xi_t} dt\right)
            = J_0\sum_{n=0}^\infty \left(\zeta^{2n} - \exp(U^{J_0})\zeta^{2n+1}  \right)
        \end{align*}
        and since $\zeta^{2n} - \exp(U^{J_0})\zeta^{2n+1} $ does not converge to zero in distribution $B_\infty$ must a.s. diverge.

        \textbf{2.} Suppose that $K$ is undefined. From \cite[pp 372, Theorem 2]{Erickson1973SLLN}, we know that if $I_+=\infty$ then $\limsup_{n\rightarrow\infty}\xi_{T_{2n}}=\infty$ a.s. Hence, by using Lemma \ref{lem:limsupxiT2n}, both $A_\infty$ and $B_\infty$ diverge a.s.

        If $I_+<\infty$ then since $K$ is undefined, as a consequence of \cite[pp 372, Theorem 2]{Erickson1973SLLN}, $I_-=\infty$ and so by Lemma \ref{lem:ericksonMAP} we have $\limsup_{t\rightarrow\infty}t^{-1}\xi_t=-\infty$ a.s. Then by the argument of case 1. above both $A_\infty$ and $B_\infty$ converge a.s. as desired.

    \end{proof}

\section{Moments and tail asymptotics of \texorpdfstring{$A_\infty$}{A infinity} and \texorpdfstring{$B_\infty$}{B infinity}}
\label{sec:moments}
    Throughout this section, we assume the Lamperti-Kiu process has an infinite lifetime. For $z\in\C$ suppose the characteristic exponents $\psi_\pm(z) := \log\left(\E[\exp(z\xi_1^\pm)]\right)$ and Laplace transforms $G^\pm(z):=\E[\exp(zU^\pm)]$ exist and are finite. Then the matrix exponent of $Y$ is defined to be
    \begin{equation*}
        F(z) := \left(\begin{array}{cc}
                    \psi_+(z)   & 0\\
                    0           & \psi_-(z)
                \end{array}\right)
                +
                \left( \begin{array}{cc}
                    -q_+        & q_+G^+(z) \\
                    q_-G^-(z)   & -q_-
                \end{array}\right)
    \end{equation*}
    and from \cite[pp 311, XI.2b]{asmussen2003applied} it is known that, for $l,j\in\{+,-\}$ and $z\in\C$,
    \begin{equation}
        \E\left[ e^{z\xi_t}; \: J_t = j \:\middle|\: J_0=l \right] = \left(e^{tF(z)}\right)_{l,j}.
    \end{equation}
    Then, let $\lambda(z)$ denote the eigen value of $F(z)$ with largest real part. Using Perron-Frobenius theory, it is shown in  \cite[pp 8, Proposition 3.2]{Kuznetsov2014} that such an eigen value is guaranteed to be simple, real and continuous as a function of $z$. From \cite[Section 3, pp 10, Proposition 3.4]{Kuznetsov2014} we also know that $\lambda(z)$ is convex when considered as a map $\lambda:\R\rightarrow\R$. Provided $F$ exists in a neighbourhood of zero, from \cite[pp 313, Corollary 2.9]{asmussen2003applied}, it is known that $K=\lambda'(0)\in[-\infty,\infty]$, where the derivative is considered on this restriction.

    It can be shown that the integrals $A_\infty$ and $B_\infty$ are the solutions of random affine equations. We will assume $q_+,q_->0$ so that $T_2<\infty$ and note that $T_2$ is independent of $Y_0$. Then, for any $T\in[0,\infty]$, define the random variables
    \begin{align*}
        A_T \coloneqq \int_0^T e^{\xi_t}\, dt
        \qquad\text{and}\qquad
        B_T \coloneqq \int_0^T J_te^{\xi_t}\, dt,
    \end{align*}
    and notice that $A_\infty=\lim_{T\rightarrow\infty}A_T$ and $B_\infty=\lim_{T\rightarrow\infty}B_T$.
    Similarly, due to the result in \cite[Section 4.3]{bertoin2005},
    by the Markov additive property
    \begin{align*}
        B_\infty
        =B_{T_2} + Y_{T_2}\hat{B}_\infty,
    \end{align*}
    where $\hat{B}_\infty$ is an independent copy of $B_\infty$

    Notice that $Y_{T_2}$ has the same sign as $Y_0$ and that $|Y_{T_2}|$ is independent of $J_0$ because of its symmetry in the components of the decomposition of $Y$. Similarly,
    \begin{equation*}
        A_\infty = A_{T_2} + Y_{T_2}\hat{A}_\infty,
    \end{equation*}
    where $\hat{A}_\infty$ is an independent but identically distributed copy of $A_\infty$ and is independent of $Y_{T_2}$ and $A_{T_2}$.

    The following results are generalisations of \cite[pp 201, Corollary 5]{bertoin2005} to Lamperti-Kiu processes using the implicit renewal theorems given in \cite[Theorem 4.1, pp 135]{goldie1991} and \cite[Theorem 5, pp 246]{kesten1973}. In the case of a \levy process, it has been shown in \cite{patie2018} that the coefficient $c_A$ of the Proposition below can be found explicitly by evaluating a Bernstein-gamma function.

    \begin{prop}
        \label{prop:kestenTails}
        Suppose $Y$ is an infinite lifetime Lamperti-Kiu process with $K<0$ and there is a $\kappa>0$ such that $F(\kappa)$ exists, $\lambda(\kappa)=0$ and
        \begin{align}
            \label{eqn:tails:kestenCondition}
            \E\left[|Y_{T_2}|^\kappa\log^+|Y_{T_2}|\right]<\infty.
        \end{align}
        If $Y_{T_2}$ does not have a lattice distribution, then there exist constants $c_A,c_B^+,c_B^-\in\R$ such that
        \begin{align*}
            c_A:=\lim_{t\rightarrow\infty}t^\kappa\Prob(A_\infty>t)
            ,\quad c^+_B := \lim_{t\rightarrow\infty}t^\kappa\Prob(B_\infty>t)
            \quad \text{and} \quad
            c^-_B :=\lim_{t\rightarrow\infty}t^\kappa\Prob(B_\infty<-t),
        \end{align*}
        hence $A_\infty,B_\infty$ have moments of order $s\in\C^+:=\{x\in\C : \Re(x)\geq0 \}$ if and only if $0\leq \Re(s)<\kappa$.
    \end{prop}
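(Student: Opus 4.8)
The plan is to obtain the three limits from the implicit renewal theorems of \cite{goldie1991} and \cite{kesten1973}, applied to the random affine equations $A_\infty = A_{T_2} + Y_{T_2}\hat{A}_\infty$ and $B_\infty = B_{T_2} + Y_{T_2}\hat{B}_\infty$ recorded above. In the notation of those theorems the multiplier is $M = Y_{T_2} = e^{\xi_{T_2}}>0$, the input is $Q = A_{T_2}\ge 0$ (respectively $Q = B_{T_2}$) and the perpetuity is $R = A_\infty\ge 0$ (respectively $R = B_\infty$). Since $K<0$, Theorem~\ref{thm:convAbs} ensures $A_\infty,B_\infty<\infty$ a.s., so these identities relate genuine random variables carrying the stated independence, and the assumption that $Y_{T_2}$ is non-lattice says precisely that $\log Y_{T_2}=\xi_{T_2}$ has non-arithmetic law. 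What remains is to verify the Cram\'er condition $\E[M^\kappa]=1$, the logarithmic moment condition $\E[M^\kappa\log^+M]<\infty$ together with finiteness and strict positivity of the slope $\mu:=\E[M^\kappa\log M]$, and the input moment condition $\E[|Q|^\kappa]<\infty$.

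For the Cram\'er condition, the Lamperti--Kiu decomposition gives $\xi_{T_2}\eqL\xi^+_{\zeta^+}+U^++\xi^-_{\zeta^-}+U^-$ with the four summands independent (and with the same law for either value of $J_0$), so conditioning on the two exponential clocks yields
\begin{equation*}
    \E[e^{\kappa\xi_{T_2}}]=\frac{q_+G_+(\kappa)}{q_+-\psi_+(\kappa)}\cdot\frac{q_-G_-(\kappa)}{q_--\psi_-(\kappa)}
\end{equation*}
as soon as $\psi_\pm(\kappa)<q_\pm$. To see the latter, let $T$ and $D$ be the trace and determinant of $F(\kappa)$; then $\lambda(\kappa)=0$ forces $T\le 0$ and $D=0$, and $D=0$ reads $(q_+-\psi_+(\kappa))(q_--\psi_-(\kappa))=q_+q_-G_+(\kappa)G_-(\kappa)$, whose right-hand side is strictly positive, so $T\le 0$ rules out both left-hand factors being negative, whence $\psi_\pm(\kappa)<q_\pm$; and the same relation $D=0$ says exactly that the displayed product equals $1$. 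Next, $\E[M^\kappa\log^+M]=\E[e^{\kappa\xi_{T_2}}(\xi_{T_2})^+]<\infty$ is precisely assumption~(\ref{eqn:tails:kestenCondition}); the slope $\mu=\E[\xi_{T_2}e^{\kappa\xi_{T_2}}]$ is finite because its negative part is bounded by $\sup_{u>0}ue^{-\kappa u}=(e\kappa)^{-1}$, and it is strictly positive because $s\mapsto\E[e^{s\xi_{T_2}}]$ is convex, equals $1$ at $s=0$ and at $s=\kappa$, and is not identically $1$ on $[0,\kappa]$ --- otherwise $\xi_{T_2}\equiv 0$, i.e.\ $Y$ would be degenerate, which contradicts $A_\infty<\infty$.

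For the input moment, split $A_{T_2}$ at $T_1=\zeta^0$ to get
\begin{equation*}
    A_{T_2}=\int_0^{\zeta^0}e^{\xi^0_s}\,ds+e^{\,\xi^0_{\zeta^0-}+U^0}\int_0^{\zeta^1}e^{\xi^1_s}\,ds,
\end{equation*}
the two pieces being built from independent ingredients. Using $(a+b)^\kappa\le 2^{(\kappa-1)^+}(a^\kappa+b^\kappa)$, the independence of $(\xi^0,\zeta^0,U^0)$ from $(\xi^1,\zeta^1)$, and the finiteness of $\E[e^{\kappa\xi^0_{\zeta^0-}}]$ and of $G_\pm(\kappa)$, this reduces to the estimate $\E\bigl[\bigl(\int_0^{\zeta^\pm}e^{\xi^\pm_s}\,ds\bigr)^\kappa\bigr]<\infty$ for the two \levy processes $\xi^\pm$ killed at their rates $q_\pm>\psi_\pm(\kappa)$; this is a standard fact about exponential functionals of (killed) \levy processes (see e.g.\ \cite{bertoin2005}), and in the generic case where $\kappa$ lies in the interior of the domain of $\psi_\pm$ one can simply dominate the integral by $\zeta^\pm\exp(\sup_{s\le\zeta^\pm}\xi^\pm_s)$, apply H\"older, and invoke $\E[\exp(\beta\sup_{s\le\zeta^\pm}\xi^\pm_s)]<\infty$ for $\psi_\pm(\beta)<q_\pm$ from the Wiener--Hopf factorisation. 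Since $|B_{T_2}|\le A_{T_2}$, the input moment condition holds for $B_{T_2}$ as well.

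With all hypotheses verified, \cite[Theorem~4.1]{goldie1991} (together with \cite[Theorem~5]{kesten1973} for the two-sided statement) yields that $c_A:=\lim_{t\to\infty}t^\kappa\Prob(A_\infty>t)$, $c_B^+:=\lim_{t\to\infty}t^\kappa\Prob(B_\infty>t)$ and $c_B^-:=\lim_{t\to\infty}t^\kappa\Prob(B_\infty<-t)$ all exist, are finite and non-negative, and satisfy $c_A>0$ and $c_B^++c_B^->0$ because neither $A_\infty$ nor $B_\infty$ is a.s.\ constant: if $A_\infty\equiv d$ then $A_{T_2}=d(1-e^{\xi_{T_2}})$ a.s., which with $A_{T_2}>0$ a.s.\ and $\E[e^{\kappa\xi_{T_2}}]=1$ forces $\xi_{T_2}\equiv 0$ and hence $A_{T_2}\equiv 0$, a contradiction, and one argues similarly for $B_\infty$ from the Lamperti--Kiu decomposition (or appeals to the non-degeneracy clause of \cite{kesten1973}). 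Finally, the asymptotics give $\Prob(|A_\infty|>t),\Prob(|B_\infty|>t)=O(t^{-\kappa})$, so $\E[A_\infty^s]$ and $\E[|B_\infty|^s]$ are finite for $\Re(s)<\kappa$, while $c_A>0$ and $c_B^++c_B^->0$ make them infinite for $\Re(s)\ge\kappa$, which is the asserted equivalence. The two steps that are genuinely more than routine are the verification of the Cram\'er condition --- extracting in particular the strict inequalities $\psi_\pm(\kappa)<q_\pm$ from $\lambda(\kappa)=0$, which are needed for $\E[e^{\kappa\xi_{T_2}}]=1$ to be meaningful and for the input bound --- and the moment estimate $\E[A_{T_2}^\kappa]<\infty$ at the critical exponent $\kappa$; the rest is the routine translation between the MAP data and the hypotheses of the implicit renewal theorems.
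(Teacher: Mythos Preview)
Your proof is correct and follows the same strategy as the paper's: reduce to the Goldie/Kesten implicit renewal theorems by verifying the Cram\'er condition $\E[Y_{T_2}^\kappa]=1$, the logarithmic moment hypothesis (which is assumed), and the input moment bound $\E[A_{T_2}^\kappa]<\infty$ (hence $\E[|B_{T_2}|^\kappa]<\infty$) via the exponential-functional estimate for killed L\'evy processes from \cite{bertoin2005}. The only noteworthy variation is your derivation of $\psi_\pm(\kappa)<q_\pm$ directly at $\kappa$ from the trace and determinant of $F(\kappa)$, whereas the paper obtains it by a continuity argument over $(0,\kappa)$ using $\det F(z)=\lambda(z)\mu(z)>0$ there; you are also more explicit than the paper about the strict positivity of the slope $\mu=\E[Y_{T_2}^\kappa\log Y_{T_2}]$ and of the limiting constants, which is what actually drives the ``only if'' direction of the moment claim.
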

    \begin{proof}
        Since the proposition assumes (\ref{eqn:tails:kestenCondition}), the result is an immediate consequence of \cite[pp 129, Section 2, Theorem 2.3]{goldie1991} and \cite[pp 246, Section 4, Theorem 5]{kesten1973} provided:
        \begin{align}
            \E[\log|Y_{T_2}|] &<0,                \label{eqn:golide1}       \\
            \E[|Y_{T_2}|^\kappa]&=1,              \label{eqn:golide2}     \\
            0<\E[|A_{T_2}|^\kappa]&<\infty,           \label{eqn:golide4a} \\
            0<\E[|B_{T_2}|^\kappa]&<\infty.           \label{eqn:golide4b}
        \end{align}
        We now prove that under the conditions of the proposition each of these equations holds.

        To show equation (\ref{eqn:golide1}), we expand $\log|Y_{T_2}|$ to get
        \begin{align*}
            \E\left[\log|Y_{T_2}|\right]
                = \frac{1}{q^+}\E[\xi_1^+] + \frac{1}{q^-}\E[\xi_1^-] + \E[U^+] + \E[U^-]
                =\left(\frac{1}{q_+} + \frac{1}{q_-} \right)K
        \end{align*}
        and since $q_+^{-1} + q_-^{-1}>0$ equation (\ref{eqn:golide1}) follows from the assumption $K<0$.

        Since $\det(F(z)) = (\psi^+(z) - q^+)(\psi^-(z) - q^-) - q^+q^-G^+(z)G^-(z)$ and by the assumption that $0$ is an eigenvalue of $F(\kappa)$, we have that
        \begin{align}
            \label{eqn:0eigenValue}
            1 = \left(\frac{q^+G^+(\kappa)}{\psi^+(\kappa) - q^+}\right)\left(\frac{q^-G^-(\kappa)}{\psi^-(\kappa)-q^-}\right).
        \end{align}

        Let $\mu(z)$ be the other eigenvalue of $F(z)$. Then, for all real $z\in(0,\kappa)$, by assumption, $\mu(z)<\lambda(z)<0$ and so $0 < \mu(z)\lambda(z) = \det(F(z))$. Rearranging this gives $(\psi^+(z) - q^+)(\psi^-(z) - q^-) > 0$, hence, $\psi^\pm(z) - q^\pm$ has no roots in $(0,\kappa)$. Since $\psi^\pm(0)-q^\pm<0$, by continuity, $\psi^\pm(z)-q^\pm<0$ for all $z\in(0,\kappa)$. By independence and using (\ref{eqn:0eigenValue}), we get
        \begin{align*}
            \E\left[\left|Y_{T_2}\right|^\kappa\right]
            &= \E[\exp(\psi^+(\kappa){\zeta^+})]G^+(\kappa)\E[\exp(\psi^-(\kappa){\zeta^-})]G^-(\kappa)=1,
        \end{align*}
        hence equation (\ref{eqn:golide2}) holds.

       Using independence and the inequality $(a+b)^x\leq 2^x(a^x+b^x)$ for $a,b,x>0$, we get
        \begin{align*}
            \E\left[ |A_{T_2}|^\kappa \right]
            \leq 2^\kappa\left( \E\left[  \left(\int_0^{\zeta^1} \exp(\xi_s^1)ds \right)^\kappa\right] + \E\left[\exp\left(\kappa\left(\xi^1_{\zeta^1} + U^1 \right)\right)\right] \E\left[ \left(\int_0^{\zeta^2}\exp(\xi_s^2)ds\right)^\kappa \right] \right).
        \end{align*}
        From \cite{bertoin2005} it is know that
        $
            \E\left[\left( \int_0^{\zeta_\pm}\exp(\xi^\pm_s) ds \right)^x\right] < \infty
        $
        for all $x\in(0,\infty)$ such that $\psi^\pm(x)-q^\pm<0$. This follows from the fact $\int_0^{\zeta^\pm}\exp(\xi^\pm_s) ds$ is the exponential functional of the \levy process $\xi^{(\pm)}$ sent to the cemetery state $-\infty$ at an independent, exponentially distribution time of rate $q^\pm$.
        Then, since we have already seen that $\psi^\pm(\kappa)-q^\pm<0$, it follows that
        $
            \E\left[\left( \int_0^{\zeta^\pm}\exp(\xi^\pm_s) ds \right)^\kappa\right] < \infty.
        $
        By the assumption that $F(\kappa)$ exists, we have $\E\left[ \exp(\kappa U^\pm) \right]<\infty$, whilst $\E\left[ \exp(\kappa\xi_{\zeta_\pm}^\pm) \right]<\infty$ follows easily from $ \E[\exp(\kappa\xi_t^\pm)]=\exp(t\psi_\pm(\kappa))$ for $t\geq 0$. Hence,
        $\E\left[|B_{T_2}|^\kappa\right]\leq\E\left[ |A_{T_2}|^\kappa \right]<\infty$
        and so equations (\ref{eqn:golide4a}) and (\ref{eqn:golide4b}) hold.
\end{proof}

\begin{rmk}
        In the case that $G^{\pm}$ are continuous, (\ref{eqn:tails:kestenCondition}) is automatic.
       Indeed, by continuity, we can pick $\epsilon>0$ such that $\psi^\pm(\kappa+\epsilon)-q^\pm<0$ and $G^\pm(\kappa+\epsilon)<\infty$. Then, from the proof of equation (\ref{eqn:golide2}) we obtain
        \begin{align*}
            \E[|Y_{T_2}|^{\kappa+\epsilon}] = \left( \frac{q^+G^+(\kappa+\epsilon)}{q^+-\psi^+(\kappa+\epsilon)} \right)\left( \frac{q^-G^-(\kappa+\epsilon)}{q^--\psi^-(\kappa+\epsilon)} \right)<\infty.
        \end{align*}
        Since $\log(x)^+<x^\epsilon$ for all $x\geq R$, for some sufficiently large $R>0$, we have
        \begin{equation*}
            \E[|Y_{T_2}|^\kappa\log Y_{T_2}^+]
            = \E[|Y_{T_2}|^\kappa\log Y_{T_2}^+\:;\: |Y_{T_2}|\leq R] + \E[|Y_{T_2}|^\kappa\log Y_{T_2}^+\:;\: |Y_{T_2}|> R]
            \leq R^{\kappa+\epsilon} + \E[|Y_{T_2}|^{\kappa+\epsilon}]<\infty ,
        \end{equation*}
        hence equation (\ref{eqn:tails:kestenCondition}) holds.
\end{rmk}

\section{Subexponential tails of \texorpdfstring{$A_\infty$}{A infinity}}
    \label{sec:subexpTails}
    When the conditions of Proposition \ref{prop:kestenTails} do not hold, a different approach to the investigation of the tails and moments of $A_\infty$ is required. In Proposition \ref{prop:kestenTails}, it is assumed that $F(\kappa)$ exists, which requires that positive exponential moments of $\xi$ must exist. This is a condition that doesn't necessarily hold in general, and in particular, when $\xi_{T_2}$ is heavy tailed.

    In this section we first present a framework for bounding $\log(A_\infty)$ by considering a piecewise linear bound for $\{\xi_t:t\geq0\}$ with a.s. finitely many discontinuities. We then define Lamperti-Kiu processes of strong subexponential type and use this framework in conjunction with the subexponential properties to obtain the right tails of $A_\infty$. Interestingly, the resulting tails are of a very different nature to those considered under \cramers condition.

    We will need the following lemma, which bounds the probability distribution of the supremum of a \levy process over an exponentially distributed interval of time in terms of the distribution of the \levy process at the end of the interval. It is a variation of \cite[Lemma 1]{WILLEKENS1987173}, where the time interval considered was fixed.
    \begin{lem}
        \label{lem:WillekensExtension2}
        Let $X$ be a \levy process, $\tau$ be an independent exponentially distributed random variable and suppose $0<u_0<u$. Then,
        \begin{equation}
            \label{eqn:willkens}
            \Prob\left( \sup_{0\leq s<\tau} X_s > u  \right) \leq\frac{\Prob\left(X_\tau \geq u-u_0 \right)}{\Prob\left( X_\tau \geq -u_0 \right)}.
        \end{equation}
    \end{lem}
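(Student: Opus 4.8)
\emph{Proof strategy.} The plan is to reduce the inequality to a first-passage decomposition of $X$ combined with the strong Markov property, using the lack of memory of the exponential time $\tau$ in place of the fixed deterministic time treated in \cite[Lemma 1]{WILLEKENS1987173}. Write $q$ for the rate of $\tau$, let $\mathcal{F}^X=(\mathcal{F}^X_t)_{t\ge0}$ be the natural filtration of $X$, and introduce the first-passage time $T:=\inf\{s\ge0:X_s>u\}$. Since $X$ is \cadlag, one checks from the definition of $T$ as an infimum that $\{\sup_{0\le s<\tau}X_s>u\}=\{T<\tau\}$, and on this event $X_T\ge u$ (by right-continuity along a sequence $s_n\downarrow T$ with $X_{s_n}>u$); in particular the left-hand side of (\ref{eqn:willkens}) equals $\Prob(T<\tau)$.

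The first step is to bound $\Prob(X_\tau\ge u-u_0)$ from below by intersecting $\{T<\tau\}$ with a suitable post-$T$ event. On $\{T<\tau\}$ we have $X_\tau=X_T+(X_\tau-X_T)$, so $\{T<\tau\}\cap\{X_\tau-X_T\ge -u_0\}\subseteq\{X_\tau\ge u-u_0\}$ because $X_T\ge u$, whence $\Prob(X_\tau\ge u-u_0)\ge\Prob(T<\tau,\ X_\tau-X_T\ge -u_0)$. By the strong Markov property at $T$, conditionally on $\mathcal{F}^X_T$ the shifted increment process $(X_{T+s}-X_T)_{s\ge0}$ is a \levy process with the law of $X$; and since $\tau$ is independent of the entire path of $X$, this increment process is independent of the pair $(\mathcal{F}^X_T,\tau)$ jointly. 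Writing $g(s):=\Prob(X_s\ge -u_0)$, conditioning on $(\mathcal{F}^X_T,\tau)$ therefore gives $\Prob(X_\tau-X_T\ge -u_0\mid\mathcal{F}^X_T,\tau)=g(\tau-T)$ on $\{T<\tau\}$, so $\Prob(X_\tau\ge u-u_0)\ge\E[\mathbbm{1}_{\{T<\tau\}}\,g(\tau-T)]$.

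The second step exploits the exponential law of $\tau$. Because $\tau$ is independent of $\mathcal{F}^X_T$ and $T$ is $\mathcal{F}^X_T$-measurable, conditioning on $\mathcal{F}^X_T$ reduces the expectation to the elementary computation, valid for each fixed $t\ge0$,
\begin{equation*}
    \E\!\left[\mathbbm{1}_{\{\tau>t\}}\,g(\tau-t)\right]=\int_t^\infty g(s-t)\,q e^{-qs}\,ds=e^{-qt}\int_0^\infty g(r)\,q e^{-qr}\,dr=e^{-qt}\,\Prob(X_\tau\ge -u_0),
\end{equation*}
where the substitution $r=s-t$ is precisely the memoryless property. Taking $t=T$ and integrating out $\mathcal{F}^X_T$ yields $\E[\mathbbm{1}_{\{T<\tau\}}g(\tau-T)]=\E[\mathbbm{1}_{\{T<\infty\}}e^{-qT}]\,\Prob(X_\tau\ge -u_0)=\Prob(T<\tau)\,\Prob(X_\tau\ge -u_0)$. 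Combining the two steps,
\begin{equation*}
    \Prob(X_\tau\ge u-u_0)\ge\Prob(T<\tau)\,\Prob(X_\tau\ge -u_0)=\Prob\!\left(\sup_{0\le s<\tau}X_s>u\right)\Prob(X_\tau\ge -u_0),
\end{equation*}
and since $u_0>0$ and $X$ is right-continuous with $X_0=0$, the factor $\Prob(X_\tau\ge -u_0)\ge\Prob(\tau\le\epsilon,\ \sup_{s\le\epsilon}|X_s|<u_0)$ is strictly positive for $\epsilon$ small; dividing by it gives (\ref{eqn:willkens}).

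I do not expect a serious obstacle: the whole argument is a strong-Markov-plus-memorylessness manipulation. The two points needing care are (i) verifying the path identity $\{\sup_{0\le s<\tau}X_s>u\}=\{T<\tau\}$, including the borderline case $X_T=u$ with a strict crossing of $u$ immediately after $T$, which is handled by the infimum definition of $T$ and \cadlag regularity, and (ii) keeping the independence bookkeeping honest — that after stopping at $T$ the increment process is independent of $(\mathcal{F}^X_T,\tau)$ \emph{jointly} — which is exactly where we use that $\tau$ is independent of the whole Lévy path, not merely of $\mathcal{F}^X_T$. The replacement of $g(\tau-T)$ by the unconditional $\Prob(X_\tau\ge -u_0)$ is the single place the exponential hypothesis (rather than a fixed deterministic time) is genuinely used, and is what makes the clean bound possible.
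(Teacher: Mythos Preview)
Your proof is correct and follows essentially the same approach as the paper: introduce the first-passage time $T$ (the paper calls it $S_u$), use $X_T\ge u$ together with the strong Markov property and the memorylessness of $\tau$ to compare $X_\tau-X_T$ on $\{T<\tau\}$ with a fresh copy of $X_\tau$, and rearrange. The paper states the key step more tersely---writing directly $\Prob(S_u<\tau;\,X_\tau-X_{S_u}<-u_0)=\Prob(S_u<\tau)\Prob(X_\tau<-u_0)$ via independent copies $(\tilde X,\tilde\tau)$---whereas you unpack this equality by conditioning on $(\mathcal{F}^X_T,\tau)$ and computing the Laplace integral explicitly; the two are the same argument at different levels of detail.
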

    \begin{proof}
        Let $S_u :=\inf\{t\geq0\::\:X_t>u\}$. Then, since $X_{S_u}\geq u$, by independent increments of $X$ and the memoryless property of $\tau$, we have that
        \begin{align*}
            \Prob(S_u<\tau ; X_{\tau} < u-u_0 )
            &\leq \Prob( S_u<\tau; X_{\tau} - X_{S_u} < -u_0 )
            =\Prob( S_u<\tau; \tilde{X}_{\tilde{\tau}} < -u_0 )
            = \Prob(S_u<\tau)\Prob(X_{\tau}<-u_0),
        \end{align*}
        where $\tilde{X}$ and $\tilde{\tau}$ are independent and identically distributed copies of $X$ and $\tau$, respectively. Then, (\ref{eqn:willkens}) can be obtained by rearranging the inequality
        \begin{align*}
            \Prob(S_u<\tau)
            &\leq \Prob\left(X_{\tau}\geq u-u_0\right) + \Prob\left( S_u<\tau ;\: X_{\tau}<u-u_0 \right)
            \leq \Prob\left(X_{\tau}\geq u-u_0\right) + \Prob(S_u<\tau)\Prob(X_{\tau}<-u_0).
        \end{align*}
    \end{proof}

    We now consider long-tailed distributions. Let $Q:\R^+\rightarrow[0,1]$ be a probability distribution and $\bar{Q}(x)\coloneqq 1-Q(x)$ for all $x\in\R^+$. Then, $Q$ is a long-tailed distribution if $Q(x+y)/Q(x)\rightarrow 1$ as $x\rightarrow\infty$, for any $y\in\R^+$.
    For any two functions $f,g:\R^+\rightarrow\R^+$, we will write
        $f\sim g$ if $\lim_{x\rightarrow\infty}f(x)/g(x) = 1$ and say that $f$ and $g$ are asymptotically equivalent.

	For a random variable $X$, we define the functions
    \begin{align}
        \label{eqn:defnIntegratedTail}
        G_X(x) := \int_x^\infty \Prob(X>u) du
        \qquad\text{and}\qquad
        H_X(x) := \min\left( 1, G_X(x) \right)
    \end{align}
    and refer to $H_X$ as the integrated tail of $X$.

        In the next lemma we show that the integrated tail of a long-tailed random variable is asymptotically equivalent to an infinite series. This will be used in Lemma \ref{lem:limZalphaBeta} to show the asymptotic equivalence of two distributions. 

    \begin{lem}
        \label{lem:tailsumdecomp}
        Suppose $K+\epsilon<0$ and that $X$ is a long tailed random variable, which is independent of $(T_{2n})_{n\in\N}$. Then, the integrated tail of $X$ has the asymptotics
        \begin{equation}
            \label{eqn:tailSumDecomp}
            \int_x^\infty \Prob(X>u)\,du
            \sim \E[T_2]|K+\epsilon| \sum_{n=0}^\infty \Prob( X > x- (K+\epsilon)T_{2n}),
        \end{equation}
        where $G_X$ is the function defined in (\ref{eqn:defnIntegratedTail}).
    \end{lem}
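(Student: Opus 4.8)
The plan is to read the sum on the right of (\ref{eqn:tailSumDecomp}) as the integral of the tail of $X$ against the renewal measure of $(T_{2n})_{n\ge0}$, and then to combine the elementary renewal theorem with the long-tailedness of $X$ to identify this integral with $\frac{1}{\E[T_2]\,|K+\epsilon|}\int_x^\infty\Prob(X>u)\,du$. Write $\delta:=-(K+\epsilon)=|K+\epsilon|>0$, so $x-(K+\epsilon)T_{2n}=x+\delta T_{2n}$, put $\bar{Q}(u):=\Prob(X>u)$ for $u\ge0$ (non-increasing, right-continuous, $\bar{Q}(\infty)=0$) and $\mu:=\E[T_2]\in(0,\infty)$. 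As observed in the proof of Lemma~\ref{lem:ericksonMAP}, the increments $T_{2n}-T_{2(n-1)}$ ($n\ge1$) are i.i.d.\ copies of $T_2$, so $(T_{2n})_{n\ge0}$ (with $T_0=0$) is an ordinary renewal process with mean step $\mu$; let $U$ be its renewal measure, $U(t):=U([0,t])=\sum_{n\ge0}\Prob(T_{2n}\le t)$. The assertion (\ref{eqn:tailSumDecomp}) is exactly
\begin{equation*}
  S(x):=\sum_{n=0}^\infty\Prob(X>x+\delta T_{2n})\ \sim\ \frac{1}{\mu\delta}\int_x^\infty\bar{Q}(u)\,du\qquad(x\to\infty),
\end{equation*}
which is what I would prove. (If $\int_x^\infty\bar{Q}(u)\,du=\infty$, the lower estimate obtained below gives $S(x)=\infty$ as well, so both sides of (\ref{eqn:tailSumDecomp}) are infinite and there is nothing more to do.)

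First, I would rewrite $S$ against the renewal measure. Using the independence of $X$ and $(T_{2n})_n$ together with Tonelli,
\begin{equation*}
  S(x)=\sum_{n=0}^\infty\E\big[\bar{Q}(x+\delta T_{2n})\big]=\int_{[0,\infty)}\bar{Q}(x+\delta t)\,U(dt).
\end{equation*}
For fixed $x$, introduce the finite measure $\nu_x$ on $(0,\infty)$ characterised by $\nu_x((t,\infty))=\bar{Q}(x+\delta t)$; then $\nu_x((0,\infty))=\bar{Q}(x)$ and $\int_{(0,\infty)}s\,\nu_x(ds)=\int_0^\infty\bar{Q}(x+\delta t)\,dt=\delta^{-1}\int_x^\infty\bar{Q}(u)\,du$. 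Substituting $\bar{Q}(x+\delta t)=\nu_x((t,\infty))$ and interchanging integrals (again by Tonelli),
\begin{equation*}
  S(x)=\int_{[0,\infty)}\nu_x\big((t,\infty)\big)\,U(dt)=\int_{(0,\infty)}U\big([0,s)\big)\,\nu_x(ds)=\int_{(0,\infty)}U(s^-)\,\nu_x(ds),
\end{equation*}
so the goal reduces to showing $\int_{(0,\infty)}U(s^-)\,\nu_x(ds)\sim\frac{1}{\mu}\int_{(0,\infty)}s\,\nu_x(ds)$ as $x\to\infty$.

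Then comes the estimate, for which I would use two inputs: (a) the elementary renewal theorem, $U(t)/t\to1/\mu$, whence also $U(s^-)/s\to1/\mu$ (sandwich $U(s-1)\le U(s^-)\le U(s)$); and (b) the standard consequence of long-tailedness that $\bar{Q}(x)=o\big(\int_x^\infty\bar{Q}(u)\,du\big)$ as $x\to\infty$ (choose $h(x)\uparrow\infty$ with $\bar{Q}(x+h(x))\sim\bar{Q}(x)$; then $\int_x^\infty\bar{Q}\ge h(x)\bar{Q}(x+h(x))\sim h(x)\bar{Q}(x)$). Fix $\epsilon'\in(0,1)$ and pick $s_0$ with $(1-\epsilon')\frac{s}{\mu}\le U(s^-)\le(1+\epsilon')\frac{s}{\mu}$ for all $s\ge s_0$. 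Split $S(x)=\int_{(0,s_0)}U(s^-)\,\nu_x(ds)+\int_{[s_0,\infty)}U(s^-)\,\nu_x(ds)$. By monotonicity the first term is at most $U(s_0)\,\nu_x((0,s_0))\le U(s_0)\,\bar{Q}(x)=o\big(\int_x^\infty\bar{Q}\big)$ by (b). In the second term $U(s^-)$ lies between $(1-\epsilon')s/\mu$ and $(1+\epsilon')s/\mu$, while $\int_{[s_0,\infty)}s\,\nu_x(ds)=\delta^{-1}\int_x^\infty\bar{Q}-\int_{(0,s_0)}s\,\nu_x(ds)$ with $\int_{(0,s_0)}s\,\nu_x(ds)\le s_0\bar{Q}(x)=o\big(\int_x^\infty\bar{Q}\big)$; hence the second term lies between $(1-\epsilon')(1+o(1))$ and $(1+\epsilon')(1+o(1))$ times $\frac{1}{\mu\delta}\int_x^\infty\bar{Q}$. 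Combining the two pieces, $\limsup_{x\to\infty}S(x)\big/\big(\frac{1}{\mu\delta}\int_x^\infty\bar{Q}\big)\le1+\epsilon'$ and the corresponding $\liminf\ge1-\epsilon'$; letting $\epsilon'\downarrow0$ yields $S(x)\sim\frac{1}{\mu\delta}\int_x^\infty\bar{Q}(u)\,du$, which is (\ref{eqn:tailSumDecomp}).

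The main point requiring care is the last step: one has to turn a renewal-weighted sum over the \emph{random} times $T_{2n}$ into a plain Lebesgue integral. This works only because long-tailedness is an ``insensitivity'' property that forces the mass of $\nu_x$ out towards $+\infty$ as $x\to\infty$ --- precisely into the region where $U(s^-)\approx s/\mu$ by the renewal theorem --- while simultaneously the fact that for long-tailed laws the integrated tail dominates the tail makes the contribution of bounded $s$ asymptotically negligible. Organising the proof through the renewal measure $U$ and the tail measure $\nu_x$, rather than through a pathwise strong-law statement $T_{2n}/n\to\mu$ (which would force one to handle the exceptional null set inside the expectation), is what keeps the bookkeeping clean.
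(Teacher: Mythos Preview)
Your proof is correct and takes a genuinely different route from the paper's.

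The paper partitions $(0,\infty)$ by the \emph{random} points $\{-(K+\epsilon)T_{2n}\}_{n\ge0}$, changes variables, and then takes expectations; the key step is to use long-tailedness in the form $\bar Q(z+y)/\bar Q(z)\to 1$ uniformly for $y$ in a bounded interval, in order to control the variation of $\bar Q$ over each random window $[T_{2n},T_{2(n+1)})$. Your argument instead passes immediately to the renewal measure $U$, rewrites $S(x)=\int U(s^-)\,\nu_x(ds)$ via Fubini, and then combines the elementary renewal theorem $U(t)/t\to1/\mu$ with long-tailedness in the form $\bar Q(x)=o\big(\int_x^\infty\bar Q\big)$, which pushes the mass of $\nu_x$ into the region where $U(s^-)\approx s/\mu$. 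The paper's approach is more hands-on and avoids citing the renewal theorem, at the cost of a longer argument with explicit $\delta$'s and a truncation parameter $l$; your approach is shorter, isolates cleanly the two analytic ingredients, and would generalise verbatim to any point process whose counting function satisfies $U(t)/t\to c\in(0,\infty)$. Both routes ultimately exploit the same insensitivity of the long-tailed tail to bounded shifts, just packaged differently.
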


    \begin{proof}
        By splitting the interval $(0,\infty)$ into a disjoint union, we can write
        \begin{align*}
            G_X(x)
            &= \sum_{n=0}^\infty  \int_{-T_{2n}(K+\epsilon)}^{-T_{2(n+1)}(K+\epsilon)} \Prob(X>u_1+x) du_1.
        \end{align*}
        Then, by using the change of variables $u=-(K+\epsilon)^{-1}u_1$, we get
        \begin{align*}
            G_X(x) &= \sum_{n=0}^\infty  \int_{T_{2n}}^{T_{2(n+1)}} \Prob\left( X > -(K+\epsilon)u + x\right) \: |K+\epsilon|  du.
        \end{align*}
        By independence of $\{T_{2n}\}_{n\in\N}$ and $X$, we can shift the domain of integration to obtain
        \begin{align*}
            G_X(x) &= \sum_{n=0}^\infty  \int_0^{T_{2(n+1)}-T_{2n}} \Prob\left(\vphantom{\Big(} X > x - (K+\epsilon)( u + T_{2n} ) \:\middle|\: T_{2n} \right)  \:|K+\epsilon|\: du.
        \end{align*}
        Taking expectations and noting that the left hand side is not random, that $T_{2(n+1)}-T_{2n}\eqL T_2$ and that $T_{2(n+1)}-T_{2n}$ is independent of $T_{2n}$ gives
        \begin{align*}
            G_X(x)
            &=|K+\epsilon|\sum_{n=0}^\infty \E\left[ \int_0^{\tilde{T}_2} \Prob\left( \vphantom{\Big(} X > x-(K+\epsilon)(u + T_{2n}) \:\middle|\: T_{2n} \right)  \: du \right],
        \end{align*}
        where $\tilde{T}_{2}$ is an independent and identically distributed copy of $T_2$. This can be written in the integral form
        \begin{align}
            \label{eqn:tailDecompIntegrals}
            G_X(x)= |K+\epsilon|\sum_{n=0}^\infty \int_0^\infty \Prob(\tilde{T}_2 \in ds )
            \int_0^s \int_0^\infty \Prob(T_{2n}\in dv) \Prob\left(X> x - (K+\epsilon)(u + v)\right) du.
        \end{align}

        Let $\delta>0$ then, since $X$ is long-tailed, for all $s>0$ there exists $R(s)>0$ such that, whenever $z>R(s)$ and $y\in[0,-s(K+\epsilon)]$,
        \begin{equation*}
            (1-\delta) \leq \frac{\Prob( X > z+ y )}{\Prob(X>z)} \leq (1+\delta)
        \end{equation*}
        and since $-v(K+\epsilon)\geq 0$ for $v\geq0$ we have for all $x>R(s)$ and $u\in[0,s]$
        \begin{align*}
            (1-\delta)   \leq \frac{\Prob( X > x-(K+\epsilon)(v+u) )}{\Prob( X> x - (K+\epsilon)v )} \leq (1+\delta) .
        \end{align*}
        To show the lower bound we use this inequality within the last two integrals of (\ref{eqn:tailDecompIntegrals}) to obtain, for $x>R(s)$,
        \begin{multline*}
            \int_0^s \int_0^\infty \Prob(T_{2n}\in dv) \Prob\left(X> x - (K+\epsilon)(u + v)\right) du
            \geq \int_0^s \int_0^\infty \Prob(T_{2n}\in dv)(1-\delta)\Prob(X > x-(K+\epsilon)v)du,
        \end{multline*}
        then evaluating the integrals and noticing the integrand is constant with respect to $u$ gives, for $x>R(s)$,
        \begin{align*}
            \int_0^s \int_0^\infty \Prob(T_{2n}\in dv) \Prob\left(X> x - (K+\epsilon)(u + v)\right) du
            \geq s(1-\delta) \Prob( X > x - (K+\epsilon)T_{2n} ).
        \end{align*}
        Now, consider some $l>0$ and suppose $x>R(l)$, so that $x>R(s)$ for any $s\in[0,l]$. Then,
        \begin{align*}
            &\int_0^\infty \Prob(T_2 \in ds)\int_0^s \int_0^\infty \Prob(T_{2n}\in dv)\Prob(X>x-(K+\epsilon)(u+v)) du\\
            &\qquad\geq \int_0^l \Prob(T_2 \in ds) s(1-\delta)\Prob(X>x-(K+\epsilon)T_{2n})\\
            &\qquad=(1-\delta)\Prob(X>x-(K+\epsilon)T_{2n})\E[T_2; T_2<l].
        \end{align*}
        Since $l>0$ and $\delta>0$ are arbitrary and $T_2$ is integrable, we can take $l$ sufficiently large to obtain $\E[T_2;T_2<l]\geq(1-\delta)\E[T_2]$ and so
        \begin{multline*}
            \int_0^\infty \Prob(T_2 \in ds)\int_0^s \int_0^\infty \Prob(T_{2n}\in dv)\Prob(X>x-(K+\epsilon)(u+v)) du
             \geq (1-\delta)^2\Prob(X>x-(K+\epsilon)T_{2n})\E[T_2].
        \end{multline*}
        If this is substituted into the expression for $G_X$ for $x>R(l)$ we have
        \begin{align*}
            G_X(x) \geq (1-\delta)^2|K+\epsilon|\E[T_2] \sum_{n=0}^\infty \Prob( X>x - (K+\epsilon)T_{2n} ).
        \end{align*}
        For the upper bound, since $-(K+\epsilon)u>0$ for $u>0$,
        \begin{align*}
            \E\left[\int_0^{\tilde{T}_2}\Prob\left( \vphantom{\Big(} X>x -(K+\epsilon)(u+T_{2n}) \:\middle|\: \sigma(T_{2n})\right) du\right]
            \leq \E[\tilde{T_2}]\Prob\left( X > x-(K+\epsilon)T_{2n}\right),
        \end{align*}
        which substituted into the expression for $G_X$ gives, for all $x>0$,
        \begin{align*}
            G_X(x) \leq \E[T_2]|K+\epsilon| \sum_{n=0}^\infty \Prob( X>x-(K+\epsilon)T_{2n} ).
        \end{align*}
        Combining the upper and lower bounds gives equation (\ref{eqn:tailSumDecomp}).

    \end{proof}

\subsection{Framework for an upper bound of \texorpdfstring{$\log(A_\infty)$}{log(A)}}
    We now develop a framework for bounding $\log(A_\infty)$ whenever $\E[\xi_{T_2}]\in(-\infty,0)$. This will be used in the strong subexponential setting of Section \ref{sec:strongSubexpType} to obtain the right tails of $A_\infty$.

    For $\epsilon\in(0,-K)$ and sufficiently large $A\in\R$ define a sequence of stopping times by $\sigma_0:=0$ and
    \begin{equation}
            \sigma_n := \inf\left\{ t>\sigma_{n-1} \::\: \xi_t-\xi_{\sigma_{n-1}} \geq (K+\epsilon)(t-\sigma_{n-1}) + A \right\},
    \end{equation}
    for each $n\in\N$, with the convention $\inf(\varnothing)=\infty$, and setting $\sigma_n=\infty$ if $\sigma_{n-1}=\infty$. Then also define
    \begin{align*}
        N:=\max\{ n\in\N_0 \:|\: \sigma_n<\infty \}
        \qquad\text{and}\qquad
        \rho_n&:=\Prob(\sigma_n<\infty \:|\: \sigma_{n-1}<\infty ).
    \end{align*}
    The next lemma concerns the finiteness of $N$.

    \begin{lem}
        \label{lem:Nfinite}
        If $\E[\xi_{T_2}]<0$, then there exists an $A^*>0$ such that for all $A>A^*$, $N$ is a.s. finite.
    \end{lem}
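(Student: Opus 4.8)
The plan is to show that the stopping times $\sigma_n$ behave like a (possibly terminating) renewal sequence and that, for $A$ large enough, each has a probability strictly less than one of being finite given the previous one is finite; summing a geometric series then forces $N<\infty$ almost surely. First I would observe that, by the Markov additive structure and the strong Markov property, conditionally on $\sigma_{n-1}<\infty$ the post-$\sigma_{n-1}$ process $(\xi_{\sigma_{n-1}+t}-\xi_{\sigma_{n-1}})_{t\geq0}$ is again a Lamperti-Kiu MAP, started from the state $J_{\sigma_{n-1}}$. Strictly speaking, because the drift line in the definition of $\sigma_n$ has the deterministic slope $K+\epsilon$, the event $\{\sigma_n<\infty\mid\sigma_{n-1}<\infty\}$ is the event that the shifted process ever crosses the line $t\mapsto (K+\epsilon)t+A$; its probability $\eta_n$ depends only on $J_{\sigma_{n-1}}\in E=\{+,-\}$, so there are really only two possible values $\eta_n\in\{\eta^+,\eta^-\}$, and it suffices to bound both by some $\eta(A)<1$.

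Next I would reduce the crossing probability to a one-sided quantity controlled by the law of large numbers. Write $\zeta_t:=\xi_t-(K+\epsilon)t$; then, under $\Prob_{\sigma,0}$, the process $(\zeta_t)_{t\geq0}$ is a drift-shifted MAP with $\lim_{t\to\infty}t^{-1}\zeta_t=-\epsilon<0$ almost surely (this is the law-of-large-numbers statement quoted in the excerpt, since $\E[\xi_{T_2}]/\E[T_2]=K$). Hence $S:=\sup_{t\geq0}\zeta_t$ is an almost surely finite random variable, for each starting state. Therefore
\begin{align*}
    \Prob_{\sigma,0}(\sigma_1<\infty) = \Prob_{\sigma,0}\!\left(\sup_{t\geq0}\zeta_t\geq A\right) = \Prob_{\sigma,0}(S\geq A)\xrightarrow[A\to\infty]{}0
\end{align*}
for each $\sigma\in\{+,-\}$. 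Choosing $A^*$ so large that $\Prob_{+,0}(S\geq A)<1$ and $\Prob_{-,0}(S\geq A)<1$ for all $A>A^*$ — take $\eta(A):=\max_{\sigma}\Prob_{\sigma,0}(S\geq A)<1$ — we get $\eta_n\leq\eta(A)$ for every $n$, uniformly. By the tower property, $\Prob(\sigma_n<\infty)=\E[\prod_{k=1}^n \eta_k]\leq\eta(A)^n$, so $\Prob(N\geq n)\leq\eta(A)^n$, whence $\E[N]\leq\eta(A)/(1-\eta(A))<\infty$ and in particular $N<\infty$ a.s..

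The main obstacle is making the crossing-probability argument fully rigorous at the level of the MAP rather than the embedded walk. Two points need care. First, one must be sure that $\sup_{t\geq0}\zeta_t<\infty$ a.s. genuinely follows from $t^{-1}\zeta_t\to-\epsilon$; this is standard (a process with negative drift drifts to $-\infty$, so its running supremum is finite), but it should be invoked cleanly, perhaps via $\sup_t \zeta_t \le \sup_n (\xi_{T_{2n}} - (K+\epsilon)T_{2n}) + \sup_n \sup_{t\in[T_{2n},T_{2n+2}]}(\xi_t-\xi_{T_{2n}})$ together with the fact that the embedded random walk $\xi_{T_{2n}}-(K+\epsilon)T_{2n}$ has negative drift $-\epsilon\E[T_2]$ and the within-block suprema are i.i.d. with finite mean (hence $o(n)$), so the supremum is a.s. finite. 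Second, the reduction of $\eta_n$ to one of only two values $\eta^{\pm}$ uses that $\sigma_{n-1}$ is a stopping time and that the MAP restarts in state $J_{\sigma_{n-1}}$ with zero additive part; this is exactly the Markov additive property \eqref{eqn:mapDefn}, applied at the stopping time $\sigma_{n-1}$ (valid since the MAP is a strong Markov process). Once these two points are secured, the geometric bound and the conclusion $N<\infty$ a.s. are immediate.
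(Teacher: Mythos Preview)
Your proposal is correct and follows essentially the same route as the paper: define the drift-shifted process $\tilde\xi_t=\xi_t-(K+\epsilon)t$, use the strong law to see it drifts to $-\infty$ so that $\sup_{t\geq0}\tilde\xi_t<\infty$ a.s., pick $A^*$ so that the crossing probability is strictly below $1$ for either starting state, and then use the Markov additive property at the stopping times $\sigma_{n-1}$ to get a geometric bound on $\Prob(N\geq n)$.

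One warning: your suggested ``clean'' justification for $\sup_t\zeta_t<\infty$ via the decomposition
\[
\sup_t \zeta_t \le \sup_n \bigl(\xi_{T_{2n}} - (K+\epsilon)T_{2n}\bigr) + \sup_n \sup_{t\in[T_{2n},T_{2n+2}]}(\xi_t-\xi_{T_{2n}})
\]
does not work as written, since the second term is a supremum over $n$ of i.i.d.\ (generally unbounded) random variables and is therefore $+\infty$ almost surely. The paper instead argues directly: from $t^{-1}\tilde\xi_t\to-\epsilon<0$ there is a (random, finite) $T$ with $\tilde\xi_t<0$ for all $t>T$, so $\sup_{t\geq0}\tilde\xi_t=\max\bigl(0,\sup_{t\in[0,T]}\tilde\xi_t\bigr)$, which is finite because a \cadlag path is bounded on compacts. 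That is the clean version you were looking for.
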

    \begin{proof}
        Define a new MAP $\{(J_t,\tilde{\xi_t})\::\:t\geq0\}$ by setting $\tilde{\xi}_t := \xi_t - (K+\epsilon) t$. For each $n\in\N$,
        \begin{align*}
            1-\rho_n
            &= \Prob\left( \sup_{t>\sigma_{n-1}}\tilde{\xi}_t-\tilde{\xi}_{\sigma_{n-1}} < A \:\middle|\: \sigma_{n-1}<\infty \right),
        \end{align*}
        then, since $\sigma_{n-1}$ is a stopping time, using the Markov additive property and summing over the events $\{J_{\sigma_{n-1}}=j\}$ for $j\in\{+,-\}$
        \begin{align*}
             1-\rho_n
             = \sum_{j\in\{+,-\}}\Prob_j\left( \sup_{t\geq0}\tilde{\xi}_t<A \right)\Prob(J_{\sigma_{n-1}} = j).
        \end{align*}
        By the strong law of large numbers,
        \begin{align*}
            \lim_{t\rightarrow\infty}t^{-1}\tilde{\xi_t}
            &= \lim_{t\rightarrow\infty}t^{-1}(\xi_t - (K+\epsilon)t)
            = K-(K+\epsilon)
            = -\epsilon <0
            \qquad\text{a.s.}
        \end{align*}
        and hence there a.s. exists $T>0$ such that if $t>T$ then $\tilde{\xi}_t<0$. From this we can conclude $\sup_{t\geq0}\tilde{\xi}_t = \max\left(0 \:,\: \sup_{t\in[0,T]}\tilde{\xi}_t\right)<\infty$ since the supremum of a \cadlag process over a compact interval is bounded.

        This implies that there exists an $A^*>0$ such that for all $A>A^*$ we have $\Prob\left(\sup_{t\geq0}\tilde{\xi}_t > A\right)<1$. From this, we conclude $N<\infty$ a.s. since
        \begin{align}
            \label{eqn:NGeomBound}
            \Prob(N>n)
            & = \prod_{k=1}^n\Prob(\sigma_{k}<\infty \:|\: \sigma_{k-1}<\infty )
            \leq  \max_{j\in\{+,-\}} \Prob_j\left( \sup_{t\geq0}\tilde{\xi}_t < A \right)^n.
        \end{align}
    \end{proof}

    Since there are conditions for $N$ to be finite, we can bound $\log(A_\infty)$ by using the stopping times $\{\sigma_n\}_{n\in\N}$ to split the process $\{\xi_t:t\geq0\}$ into a finite number of bounded sections.

    Define the constant
    \begin{align*}
        C:= \log\left( \frac{e^A}{|K+\epsilon|} \right)
    \end{align*}
    and by taking $A>A^*$ sufficiently large we can ensure $e^C>2$. Then, we have the following upper bound for $\log(A_\infty)$.
    \begin{lem}
        \label{lem:logAsum}
        If $\E[\xi_{T_2}]<0$, then
        \begin{equation*}
            \log{A_\infty} \leq (N+1)C + \sum_{n=1}^N(\xi_{\sigma_n} - \xi_{\sigma_{n-1}})^+,
        \end{equation*}
        where $(\cdot)^+$ denotes the positive part.
    \end{lem}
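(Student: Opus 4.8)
The plan is to exploit the definition of the stopping times $\{\sigma_n\}$ to split the integral $A_\infty=\int_0^\infty e^{\xi_t}\,dt$ into the pieces over $[\sigma_{n-1},\sigma_n)$ for $n=1,\dots,N$ together with the final piece over $[\sigma_N,\infty)$, bound $\xi_t$ on each piece by a linear function whose slope is $K+\epsilon<0$, and then control $\log$ of the resulting sum of at most $N+1$ terms. The key observation, which I would record first, is that for $n\leq N$ and $t\in[\sigma_{n-1},\sigma_n)$ the very definition of $\sigma_n$ gives $\xi_t-\xi_{\sigma_{n-1}}<(K+\epsilon)(t-\sigma_{n-1})+A$, so that $e^{\xi_t}\leq e^{\xi_{\sigma_{n-1}}}e^{A}e^{(K+\epsilon)(t-\sigma_{n-1})}$. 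Integrating over $[\sigma_{n-1},\sigma_n)\subseteq[\sigma_{n-1},\infty)$ and using $K+\epsilon<0$ yields
\begin{equation*}
    \int_{\sigma_{n-1}}^{\sigma_n}e^{\xi_t}\,dt\leq e^{\xi_{\sigma_{n-1}}}e^{A}\int_0^\infty e^{(K+\epsilon)u}\,du=e^{\xi_{\sigma_{n-1}}}\,\frac{e^{A}}{|K+\epsilon|}=e^{\xi_{\sigma_{n-1}}+C}.
\end{equation*}

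**Handling the tail piece and summing.** The same linear bound applies on the final interval $[\sigma_N,\infty)$: by definition of $N$ the stopping time $\sigma_{N+1}=\infty$, so for every $t\geq\sigma_N$ we have $\xi_t-\xi_{\sigma_N}<(K+\epsilon)(t-\sigma_N)+A$, whence $\int_{\sigma_N}^\infty e^{\xi_t}\,dt\leq e^{\xi_{\sigma_N}+C}$ by the identical computation. Adding the $N+1$ bounds gives
\begin{equation*}
    A_\infty\leq e^{C}\sum_{n=0}^{N}e^{\xi_{\sigma_n}}.
\end{equation*}
Next I would bound each $e^{\xi_{\sigma_n}}$ from above by writing $\xi_{\sigma_n}=\sum_{k=1}^n(\xi_{\sigma_k}-\xi_{\sigma_{k-1}})\leq\sum_{k=1}^n(\xi_{\sigma_k}-\xi_{\sigma_{k-1}})^+=:S_n$ (recalling $\xi_{\sigma_0}=\xi_0$, which we may take to be $0$ after the usual normalisation, or otherwise absorb into an additive constant), so that the sequence $(S_n)_{n\ge 0}$ is nondecreasing. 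Hence $\sum_{n=0}^N e^{\xi_{\sigma_n}}\leq\sum_{n=0}^N e^{S_n}\leq(N+1)e^{S_N}$, giving $A_\infty\leq(N+1)e^{C}e^{S_N}=(N+1)e^{C}\exp\!\big(\sum_{n=1}^N(\xi_{\sigma_n}-\xi_{\sigma_{n-1}})^+\big)$.

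**Taking logarithms.** Taking $\log$ of both sides yields
\begin{equation*}
    \log A_\infty\leq\log(N+1)+C+\sum_{n=1}^N(\xi_{\sigma_n}-\xi_{\sigma_{n-1}})^+.
\end{equation*}
It remains to absorb $\log(N+1)$ into the $(N+1)C$ term claimed in the statement. Since $A>A^*$ was chosen large enough that $e^{C}>2$, i.e. $C>\log 2$, we have $\log(N+1)\leq N\log 2\leq NC$ for every $N\geq 1$ (and the bound is trivial for $N=0$), so $\log(N+1)+C\leq NC+C=(N+1)C$. This gives exactly $\log A_\infty\leq(N+1)C+\sum_{n=1}^N(\xi_{\sigma_n}-\xi_{\sigma_{n-1}})^+$. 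The main thing to be careful about is the finiteness of $N$, which is supplied by Lemma \ref{lem:Nfinite} under the hypothesis $\E[\xi_{T_2}]<0$ and the choice $A>A^*$; without it the splitting is vacuous and the sum could be infinite. A secondary point requiring care is that the intervals $[\sigma_{n-1},\sigma_n)$ for $n=1,\dots,N$ together with $[\sigma_N,\infty)$ really do cover $[0,\infty)$ and are disjoint, which is immediate from $\sigma_0=0$, the strict monotonicity $\sigma_{n-1}<\sigma_n$ on $\{\sigma_{n-1}<\infty\}$, and the definition of $N$ as the last finite index; I would note this explicitly so that the decomposition $A_\infty=\sum_{n=1}^{N}\int_{\sigma_{n-1}}^{\sigma_n}e^{\xi_t}dt+\int_{\sigma_N}^\infty e^{\xi_t}dt$ is justified.
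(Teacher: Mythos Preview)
Your proof is correct and shares the paper's core idea: split $A_\infty$ at the stopping times $\sigma_n$, use the defining inequality $\xi_t-\xi_{\sigma_{n-1}}<(K+\epsilon)(t-\sigma_{n-1})+A$ to bound each integral piece by $e^{\xi_{\sigma_{n-1}}+C}$, and then take logarithms. The paper does exactly this, obtaining the same intermediate bound.

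Where you diverge slightly is in the final combinatorial step. The paper writes $A_\infty$ as a \emph{nested} expression
\[
A_\infty \le e^C + e^{\xi_{\sigma_1}}\bigl(e^C + e^{\xi_{\sigma_2}-\xi_{\sigma_1}}(e^C+\cdots)\bigr),
\]
replaces each increment by its positive part so that every factor exceeds $2$, and then peels off the logarithm layer by layer using $\log(A+B)\le\log A+\log B$ for $A,B>2$. You instead flatten the sum, bound it by $(N+1)$ times the largest term $e^{S_N}$, and then absorb $\log(N+1)\le N\log 2\le NC$ using $C>\log 2$. Your route is arguably cleaner, since it avoids the mildly unusual log-of-sum inequality and makes the role of the condition $e^C>2$ completely transparent; the paper's nested form, on the other hand, records exactly which increment contributes at each level, which matches more directly the telescoping structure used downstream. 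Either way the bound and the hypotheses invoked (Lemma~\ref{lem:Nfinite} for $N<\infty$, and $A$ large enough that $e^C>2$) are identical.
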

    \begin{proof}
        Following the approach of \cite[pp 11, Lemma 4.1]{MaulikZwart2006156},
        $A_\infty$ may be expanded as
        \begin{multline*}
            A_\infty = \int_0^{\sigma_1} e^{\xi_t} dt + e^{\xi_{\sigma_1}}\left( \int_{\sigma_1}^{\sigma_2}e^{\xi_t - \xi_{\sigma_1}}dt  + e^{\xi_{\sigma_2}-\xi_{\sigma_1}}\left( \int_{\sigma_2}^{\sigma_3}e^{\xi_t - \xi_{\sigma_2}}dt +  \dots \right.\right.\\
            \left.\left.\dots + e^{\xi_{\sigma_N}-\xi_{\sigma_{N-1}}}\left( \int_{\sigma_N}^{\sigma_{N+1}}e^{\xi_t - \xi_{\sigma_N}}dt \right) \right)\right),
        \end{multline*}
        noting that $\sigma_{N+1}=\infty$. By the definition of $\sigma_n$, we have
        \begin{equation*}
            \int_{\sigma_n}^{\sigma_{n+1}}e^{\xi_t - \xi_{\sigma_n}}dt
                \leq \int_{\sigma_n}^{\sigma_{n+1}}\exp( (K+\epsilon)(t-\sigma_n) + A) dt
                \leq e^C,
        \end{equation*}
        which substituted into the expression for $A_\infty$ gives
        \begin{align*}
            A_\infty
            \leq
            e^C + e^{\xi_{\sigma_1}}\left( e^C  + e^{\xi_{\sigma_2}-\xi_{\sigma_1}}\left( e^C +
            \dots + e^{\xi_{\sigma_N}-\xi_{\sigma_{N-1}}}\left( e^C \right)\right)\right).
        \end{align*}
        Then, by considering the logarithm of both sides of and repeatedly using the property $\log(A+B)\leq\log(A)+\log(B)$ whenever $A,B>2$,
        \begin{align*}
            \log(A_\infty)
            \leq(N+1)C + \sum_{n=1}^N(\xi_{\sigma_n} - \xi_{\sigma_{n-1}})^+
        \end{align*}
        where we have made use of the fact $e^{(\xi_{\sigma_n}-\xi_{\sigma_{n-1}})^+}\geq1$  and $e^C>2$ in order to use the log inequality.
    \end{proof}

    As a consequence of this lemma, the right tails of $\log(A_\infty)$ can be studied  by considering the evolution of the MAP between the stopping times $\{\sigma_n\}_{n\in\N}.$ First we consider the $J$ component.

    Let $(K_n)_{n\in\N_0}$ be the sequence of random variables, taking values in $\{+,-,\infty\}$, such that for each $n\in\N$, if $\sigma_n<\infty$ then $K_n=J_{\sigma_n}$ otherwise $K_n=\infty$.
    For each $\alpha,\beta\in\{+,-\}$, we will be interested in the number of times that $\{K_n\}_{n\in\N}$ transitions from $\alpha$ to $\beta$. For this purpose,  define the random variable $N(\alpha,\beta):=\sum_{k=1}^\infty\mathbbm{1}_{\{K_{k-1}=\alpha,K_k=\beta\}}$.
    We also make use of the notation $f=o(g)$ for any two functions $f,g:\R^+\rightarrow\R^+$ such that  $\lim_{x\rightarrow\infty}f(x)/g(x) = 0$.

    \begin{prop}
        \label{prop:markovChainK}
        Suppose $\E[\xi_{T_2}]\in(-\infty,0)$. Then, the sequence $(K_n)_{n\in\N_0}$ is a discrete     time homogeneous Markov chain, with $\infty$ as an absorbing state. Moreover, if $\eta$ is the stochastic matrix of $\{K_n\}_{n\in\N}$ and $\alpha,\beta,\gamma\in\{+,-\}$ with $\alpha\neq\beta$, then
        \begin{align*}
            \eta_{\alpha,\gamma}&\rightarrow0,
            \qquad
	   \E_\alpha[N(\alpha,\gamma)]\sim \eta_{\alpha,\gamma}
	   \qquad\text{and}\qquad
	  \E_\beta[N(\alpha,\gamma)] = o(\eta_{\alpha,\gamma})
        \end{align*}
        as $A\rightarrow\infty$.
        \if{}
        \begin{align}
            \eta_{\alpha,\beta}&\rightarrow0\\
            \E_\alpha[N(\alpha,\gamma)] &=
            \frac{\eta_{\alpha,\gamma}(1-\eta_{\beta,\beta})}{(1-\eta_{\beta,\beta})(1-\eta_{\alpha,\alpha}) - \eta_{\beta,\alpha}\eta_{\alpha,\beta}} \sim \eta_{\alpha,\gamma}\\
            \E_\beta[N(\alpha,\gamma)] &= \frac{\eta_{\beta,\alpha}\eta_{\alpha,\gamma}}{(1-\eta_{\alpha,\alpha})(1-\eta_{\beta,\beta}) - \eta_{\beta,\alpha}\eta_{\alpha,\beta}}=o(\eta_{\alpha,\gamma})
        \end{align}
        \fi
    \end{prop}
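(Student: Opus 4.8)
The plan is to verify the Markov property by exhibiting $(K_n)_{n\in\N_0}$ as a functional of an embedded Markov chain, then to estimate the transition probabilities using the tools already developed. First I would show that $(K_n)$ is a homogeneous Markov chain. The key point is that each $\sigma_n$ is a stopping time and, by the definition of $\sigma_n$, the event $\{\sigma_n<\infty\}$ together with the value of $J_{\sigma_n}$ depends only on the post-$\sigma_{n-1}$ increment of the MAP $(J,\tilde\xi)$ (with $\tilde\xi_t=\xi_t-(K+\epsilon)t$ as in the proof of Lemma~\ref{lem:Nfinite}), shifted by $\sigma_{n-1}$. By the Markov additive property in~(\ref{eqn:mapDefn}), conditionally on $\{\sigma_{n-1}<\infty\}$ and $J_{\sigma_{n-1}}=j$, the law of $(\sigma_n-\sigma_{n-1},J_{\sigma_n})$ depends only on $j$; hence $(K_n)$ is a homogeneous Markov chain on $\{+,-,\infty\}$ with $\infty$ absorbing, and its transition matrix restricted to $\{+,-\}$ has row sums $\eta_j:=\eta_{j,+}+\eta_{j,-}=\Prob_j(\sup_{t\ge0}\tilde\xi_t\ge A)<1$.

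Next I would prove $\eta_{\alpha,\beta}\to0$ as $A\to\infty$ for $\alpha\neq\beta$. Since $\eta_{\alpha,\beta}\le\Prob_\alpha(\sup_{t\ge0}\tilde\xi_t\ge A)$ and $\sup_{t\ge0}\tilde\xi_t<\infty$ a.s.\ (shown in Lemma~\ref{lem:Nfinite}), dominated convergence gives $\eta_{\alpha,\beta}\to0$; in fact all entries $\eta_{\alpha,\gamma}$ tend to $0$. The real content is the comparison of $\E_\alpha[N(\alpha,\gamma)]$ and $\E_\beta[N(\alpha,\gamma)]$ with $\eta_{\alpha,\gamma}$. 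For a two-state transient chain one can write these expectations in closed form: letting $\beta$ be the state complementary to $\alpha$, a first-step (or renewal) decomposition of the number of $\alpha\to\gamma$ transitions yields
\begin{align*}
    \E_\alpha[N(\alpha,\gamma)]
    &= \frac{\eta_{\alpha,\gamma}(1-\eta_{\beta,\beta})}{(1-\eta_{\alpha,\alpha})(1-\eta_{\beta,\beta})-\eta_{\alpha,\beta}\eta_{\beta,\alpha}},\\
    \E_\beta[N(\alpha,\gamma)]
    &= \frac{\eta_{\beta,\alpha}\,\eta_{\alpha,\gamma}}{(1-\eta_{\alpha,\alpha})(1-\eta_{\beta,\beta})-\eta_{\alpha,\beta}\eta_{\beta,\alpha}}.
\end{align*}
These follow by conditioning on the first jump and solving the resulting $2\times2$ linear system (the absorbing state $\infty$ is reached with probability one, so both quantities are finite). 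To get the linear system I would set $u_j=\E_j[N(\alpha,\gamma)]$ and note $u_j=\eta_{j,\alpha}\,\eta_{\alpha,\gamma}\cdot(\text{expected visits bookkeeping})$; more directly, counting $\alpha\to\gamma$ transitions as a reward collected each time the chain sits at $\alpha$ and then jumps to $\gamma$ gives $u_\alpha=\eta_{\alpha,\gamma}+\eta_{\alpha,\alpha}u_\alpha+\eta_{\alpha,\beta}u_\beta$ and $u_\beta=\eta_{\beta,\alpha}u_\alpha+\eta_{\beta,\beta}u_\beta$, whose solution is the display above.

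Finally I would read off the asymptotics as $A\to\infty$. Since every $\eta_{i,j}$ with $i\ne j$ and every $\eta_{i,\gamma}$ tends to $0$, the denominator $(1-\eta_{\alpha,\alpha})(1-\eta_{\beta,\beta})-\eta_{\alpha,\beta}\eta_{\beta,\alpha}$ tends to $1$, whence $\E_\alpha[N(\alpha,\gamma)]\sim\eta_{\alpha,\gamma}$, and $\E_\beta[N(\alpha,\gamma)]=\eta_{\beta,\alpha}\,\eta_{\alpha,\gamma}\cdot(1+o(1))=o(\eta_{\alpha,\gamma})$ because the extra factor $\eta_{\beta,\alpha}\to0$. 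The main obstacle, I expect, is not any single estimate but making the Markov-property argument airtight: one must be careful that $\sigma_n$ is genuinely a stopping time for the filtration generated by $(J,\xi)$ and that, on $\{\sigma_{n-1}=\infty\}$, the convention $\sigma_n=\infty$ is consistent with the strong Markov property so that $\infty$ is absorbing; once that is in place the rest is the routine two-state computation above together with the a.s.\ finiteness of $\sup_{t\ge0}\tilde\xi_t$ from Lemma~\ref{lem:Nfinite}.
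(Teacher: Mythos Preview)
Your proposal is correct and follows essentially the same approach as the paper: the Markov property is established via the strong Markov additive property at the stopping times $\sigma_n$, the convergence $\eta_{\alpha,\beta}\to0$ comes from $\sup_{t\ge0}\tilde\xi_t<\infty$ a.s.\ (Lemma~\ref{lem:Nfinite}), and the asymptotics for $\E_\sigma[N(\alpha,\gamma)]$ are read off from the same closed-form expressions. The only cosmetic difference is that the paper obtains those expressions by first computing $\phi_\sigma(\alpha)=\sum_{n\ge1}\Prob_\sigma(K_n=\alpha)$ via a $2\times2$ linear system and then using $\E_\sigma[N(\alpha,\gamma)]=\eta_{\alpha,\gamma}\big(\mathbbm{1}_{\{\sigma=\alpha\}}+\phi_\sigma(\alpha)\big)$, whereas you apply first-step analysis directly to $u_j=\E_j[N(\alpha,\gamma)]$; both routes yield the identical formulas and asymptotics.
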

    \begin{proof}
        First, we show that $\{K_n\}_{n\in\N}$ is a Markov chain. If $K_{n-1}\neq\infty$ then by the Markov additive property, since $\sigma_{n-1}$ is a stopping time, $\{\xi_{\sigma_{n-1}+t}-\xi_{\sigma_{n-1}}\:|\: t\geq0\}$ is independent of $\mathcal{F}_{\sigma_{n-1}}$ given $K_{n-1}$. Moreover, the random variable
        \begin{align*}
            \Delta\sigma_n := \sigma_{n} - \sigma_{n-1}
            &= \inf\{ t\geq0 \::\: \xi_{t+\sigma_{n-1}} - \xi_{\sigma_{n-1}} \geq t(K+\epsilon) + A \},
        \end{align*}
        is a function of $\{\xi_{t+\sigma_{n-1}}-\xi_{\sigma_{n-1}}\::\:t\geq0\}$. Thus, the event $\{K_n=\infty\}=\{\Delta\sigma_n=\infty\}$ is independent of $\mathcal{F}_{\sigma_{n-1}}$ given $K_{n-1}$ and has the same law as the event $\{K_1=\infty\}$ given $K_0$.

        Moreover, if $\Delta\sigma_n<\infty$ then $K_n=J_{\sigma_n}=J_{\sigma_{n-1}+\Delta\sigma_n}$, hence $K_n$ is a function of\\ $\left\{ \left(\xi_{\sigma_{n-1}+t}-\xi_{\sigma_{n-1}},J_{\sigma_{n-1}+t}\right)\::\: t\geq0 \right\}$ and so by the Markov additive property is independent of $\mathcal{F}_{\sigma_{n-1}}$ given $K_{n-1}$ and has the same distribution as $K_1$ given $K_0$. Hence, the sequence $(K_n)_{n\in\N}$ is a time homogeneous Markov chain. By definition of $\sigma_n$, $\infty$ is clearly an absorbing state for $(K_n)_{n\in\N}$.

        Now, we consider the limiting behaviour of $\eta$ as $A\rightarrow\infty$. Let $\alpha,\gamma\in\{+,-\}$. From the proof of Lemma \ref{lem:Nfinite}, we know that $\sup_{t\geq0}\tilde{\xi}_t<\infty$ a.s. where $\tilde{\xi_t}:=\xi_t-(K+\epsilon)t$. Thus
        \begin{align*}
            \lim_{A\rightarrow\infty}\Prob\left( \sup_{t\geq0}\left\{\xi_t -(K+\epsilon)t \right\} > A \right)
            = \lim_{A\rightarrow\infty}\Prob\left( \sup_{t\geq0}\tilde{\xi}_t > A \right)
            = 0,
        \end{align*}
        however,
        \begin{align*}
            \eta_{\alpha,+}+\eta_{\alpha,-} = \Prob_\alpha\left(\sigma_1<\infty \right) = \Prob_\alpha\left( \sup_{t\geq0}\left\{\xi_t-(K+\epsilon)t\right\}>A\right)
        \end{align*}
        and, since $\eta_{\alpha,\gamma}$ is non-negative, this implies $\lim_{A\rightarrow\infty}\eta_{\alpha,\gamma} = 0$.

        Further assume that $\gamma\in\{+,-\}$ and $\alpha\neq\beta$. Then, it is easily seen that
        \begin{align*}
            \E_\theta[N(\alpha,\gamma)]
            &=\sum_{n=1}^\infty \Prob_\theta(K_n=\gamma\:|\:K_{n-1}=\alpha)\Prob_\theta(K_{n-1}=\alpha)
            =\eta_{\alpha,\gamma}\left( \mathbbm{1}_{\{\theta=\alpha\}} + \phi_\sigma(\alpha) \right),
        \end{align*}
        where $\phi_\theta(\alpha) := \sum_{n=1}^\infty \Prob_\theta(K_n=\alpha)$. Since $\infty$ is an absorbing state of the Markov Chain $(K_n)_{n\in\N}$ and $\alpha\neq\beta$, for each $n\in\N$,
        \begin{align*}
            \Prob_\theta(K_n=\alpha)
            &= \Prob_\theta( K_n=\alpha\:|\: K_{n-1}=\alpha)\Prob_\theta(K_{n-1}=\alpha) + \Prob_\theta( K_n=\alpha\:|\: K_{n-1}=\beta )\Prob_\theta(K_{n-1}=\beta)\\
            &= \eta_{\alpha,\alpha}\Prob_\theta(K_{n-1}=\alpha) + \eta_{\beta,\alpha}\Prob_\theta(K_{n-1}=\beta),
        \end{align*}
        then summing up over $n\in\N$ we have
        \begin{align*}
            \phi_\theta(\alpha) = \eta_{\alpha,\alpha}\left(  \mathbbm{1}_{\{\theta=\alpha\}} + \phi_\theta(\alpha) \right) + \eta_{\beta,\alpha}\left( \mathbbm{1}_{\{\theta=\beta\}} + \phi_\theta(\beta) \right),
        \end{align*}
        and by symmetry
        \begin{align*}
            \phi_\theta(\beta) = \eta_{\beta,\beta}\left(  \mathbbm{1}_{\{\theta=\beta\}} + \phi_\theta(\beta) \right) + \eta_{\alpha,\beta}\left( \mathbbm{1}_{\{\theta=\alpha\}} + \phi_\theta(\alpha) \right).
        \end{align*}
        Solving this system gives
        \begin{align*}
            \phi_\theta(\alpha) = \frac{\mathbbm{1}_{\{\theta=\alpha\}}\left(\eta_{\alpha,\alpha}(1-\eta_{\beta,\beta}) + \eta_{\beta,\alpha}\eta_{\alpha,\beta}\right) + \mathbbm{1}_{\{\theta=\beta\}}\eta_{\beta,\alpha} }{ (1-\eta_{\alpha,\alpha})(1-\eta_{\beta,\beta}) -\eta_{\beta,\alpha}\eta_{\alpha,\beta} },
        \end{align*}
        thus
        \begin{align*}
            \E_\theta\left[ N(\alpha,\gamma) \right]
            =\frac{\eta_{\alpha,\gamma}\left( \mathbbm{1}_{\{\theta=\alpha\}}(1-\eta_{\beta,\beta}) + \mathbbm{1}_{\{\theta=\beta\}}\eta_{\beta,\alpha} \right)}{(1-\eta_{\alpha,\alpha})(1-\eta_{\beta,\beta}) -\eta_{\beta,\alpha}\eta_{\alpha,\beta}},
        \end{align*}
        from which the two asymptotic results for $\E_\theta[N(\alpha,\gamma)]$ are a consequence of the limiting behaviour of $\eta$.
    \end{proof}

\if{}
    \begin{lem}
        \label{lem:KmarkovChain}
        Under Assumption (I), the sequence $(K_n)_{n\in\N_0}$ is a discrete time homogeneous Markov chain with $\infty$ being an absorbing state.
    \end{lem}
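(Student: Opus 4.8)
The plan is to establish the three assertions in order: (a) $(K_n)_{n\in\N_0}$ is a time-homogeneous Markov chain with $\infty$ absorbing; (b) $\eta_{\alpha,\beta}\to0$ as $A\to\infty$ for $\alpha\ne\beta$ (indeed for all $\alpha,\beta\in\{+,-\}$); (c) an exact formula for $\E_\sigma[N(\alpha,\gamma)]$ by first-step analysis, from which the $\sim$ and $o$ statements follow by sending the $\eta$-entries to $0$. For (a), work under $\Prob_\sigma$, $\sigma\in\{+,-\}$, so $K_0=\sigma\ne\infty$. If $K_{n-1}=\infty$ then $\sigma_{n-1}=\infty$, hence by the convention in the definition of $\sigma_n$ we have $\sigma_n=\infty$ and $K_n=\infty$, so $\infty$ is absorbing. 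If $K_{n-1}\ne\infty$, i.e.\ $\sigma_{n-1}<\infty$, then $\sigma_{n-1}$ is a finite $(\mathcal{F}_t)$-stopping time; applying the strong Markov property of the MAP (the standard extension of (\ref{eqn:mapDefn}) to stopping times), conditionally on $\mathcal{F}_{\sigma_{n-1}}$ the recentred post-$\sigma_{n-1}$ process $\Xi^{(n-1)}:=\{(\xi_{\sigma_{n-1}+t}-\xi_{\sigma_{n-1}},J_{\sigma_{n-1}+t}):t\ge0\}$ is a copy of the MAP started from $(J_{\sigma_{n-1}},0)$ and depends on $\mathcal{F}_{\sigma_{n-1}}$ only through $J_{\sigma_{n-1}}=K_{n-1}$. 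Now $\Delta\sigma_n:=\sigma_n-\sigma_{n-1}=\inf\{t\ge0:\xi_{\sigma_{n-1}+t}-\xi_{\sigma_{n-1}}\ge(K+\epsilon)t+A\}$ is a measurable functional of $\Xi^{(n-1)}$, and $K_n=\infty$ on $\{\Delta\sigma_n=\infty\}$ while $K_n=J_{\sigma_{n-1}+\Delta\sigma_n}$, again a functional of $\Xi^{(n-1)}$, on $\{\Delta\sigma_n<\infty\}$; thus $K_n=\Psi(\Xi^{(n-1)})$ for one fixed measurable map $\Psi$ independent of $n$. Since $K_0,\dots,K_{n-1}$ are $\mathcal{F}_{\sigma_{n-1}}$-measurable, it follows that $\Prob_\sigma(K_n\in\cdot\mid K_0,\dots,K_{n-1})$ depends only on $K_{n-1}$ and equals the law of $K_1$ under $\Prob_{K_{n-1}}$, which is the time-homogeneous Markov property.

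For (b), put $\tilde\xi_t:=\xi_t-(K+\epsilon)t$, as in the proof of Lemma~\ref{lem:Nfinite}, where it is shown that $t^{-1}\tilde\xi_t\to-\epsilon<0$ a.s., so the \cadlag\ path $\tilde\xi$ satisfies $\sup_{t\ge0}\tilde\xi_t<\infty$ a.s. Because $\{\sigma_1<\infty\}\subseteq\{\sup_{t\ge0}\tilde\xi_t\ge A\}$ we get $\eta_{\alpha,+}+\eta_{\alpha,-}=\Prob_\alpha(\sigma_1<\infty)\le\Prob_\alpha(\sup_{t\ge0}\tilde\xi_t\ge A)$, whose right-hand side decreases to $0$ as $A\to\infty$ by continuity of measure; since the two summands are non-negative, $\eta_{\alpha,\beta}\to0$ for all $\alpha,\beta\in\{+,-\}$, diagonal entries included. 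Next fix $\alpha\ne\beta$ in $\{+,-\}$, $\gamma\in\{+,-\}$ and a starting state $\sigma\in\{+,-\}$. By the Markov property $\Prob_\sigma(K_{n-1}=\alpha,K_n=\gamma)=\eta_{\alpha,\gamma}\Prob_\sigma(K_{n-1}=\alpha)$, so $\E_\sigma[N(\alpha,\gamma)]=\eta_{\alpha,\gamma}\bigl(\mathbbm{1}_{\{\sigma=\alpha\}}+\phi_\sigma(\alpha)\bigr)$ where $\phi_\sigma(\alpha):=\sum_{n\ge1}\Prob_\sigma(K_n=\alpha)$; these series are finite since $\Prob_\sigma(K_n=\alpha)\le\Prob_\sigma(\sigma_n<\infty)=\Prob_\sigma(N\ge n)$, which decays geometrically in $n$ for $A$ large by (\ref{eqn:NGeomBound}).

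To finish (c), note that, $\infty$ being absorbing and $\{+,-\}\setminus\{\alpha\}=\{\beta\}$, a one-step decomposition gives $\Prob_\sigma(K_n=\alpha)=\eta_{\alpha,\alpha}\Prob_\sigma(K_{n-1}=\alpha)+\eta_{\beta,\alpha}\Prob_\sigma(K_{n-1}=\beta)$; summing over $n\ge1$, with the symmetric identity for $\phi_\sigma(\beta)$, yields a $2\times2$ linear system whose determinant $D:=(1-\eta_{\alpha,\alpha})(1-\eta_{\beta,\beta})-\eta_{\beta,\alpha}\eta_{\alpha,\beta}$ is positive for $A$ large; solving it and simplifying produces
\[
\E_\sigma[N(\alpha,\gamma)]=\frac{\eta_{\alpha,\gamma}\bigl(\mathbbm{1}_{\{\sigma=\alpha\}}(1-\eta_{\beta,\beta})+\mathbbm{1}_{\{\sigma=\beta\}}\eta_{\beta,\alpha}\bigr)}{(1-\eta_{\alpha,\alpha})(1-\eta_{\beta,\beta})-\eta_{\beta,\alpha}\eta_{\alpha,\beta}}.
\]
In particular $\E_\alpha[N(\alpha,\gamma)]=\eta_{\alpha,\gamma}(1-\eta_{\beta,\beta})/D$ and $\E_\beta[N(\alpha,\gamma)]=\eta_{\alpha,\gamma}\eta_{\beta,\alpha}/D$. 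By (b) every $\eta$-entry tends to $0$ as $A\to\infty$, hence $D\to1$, $(1-\eta_{\beta,\beta})/D\to1$ and $\eta_{\beta,\alpha}/D\to0$, which is precisely $\E_\alpha[N(\alpha,\gamma)]\sim\eta_{\alpha,\gamma}$ and $\E_\beta[N(\alpha,\gamma)]=o(\eta_{\alpha,\gamma})$ (both read off from the ratios, with both sides vanishing when $\eta_{\alpha,\gamma}=0$).

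The one genuinely delicate step is (a): the identity (\ref{eqn:mapDefn}) is stated only at deterministic times, so it must be upgraded to the stopping time $\sigma_{n-1}$, and one must verify carefully that $\Delta\sigma_n$ and $K_n$ are measurable functionals of the recentred post-$\sigma_{n-1}$ path, treating the event $\{\Delta\sigma_n=\infty\}$ and the absorbing state $\infty$ correctly. Once the Markov property is secured, steps (b) and (c) reduce to a continuity-of-measure argument together with a deterministic $2\times2$ solve whose coefficients are already controlled through Lemma~\ref{lem:Nfinite}.
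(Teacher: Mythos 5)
Your proof of the stated lemma is correct and follows essentially the same route as the paper: the strong Markov additive property applied at the stopping time $\sigma_{n-1}$, the observation that $\Delta\sigma_n$ and hence $K_n$ are measurable functionals of the recentred post-$\sigma_{n-1}$ path $\{(\xi_{\sigma_{n-1}+t}-\xi_{\sigma_{n-1}},J_{\sigma_{n-1}+t}):t\geq0\}$ whose conditional law depends on $\mathcal{F}_{\sigma_{n-1}}$ only through $K_{n-1}$, and absorption at $\infty$ by the convention in the definition of $\sigma_n$ (your added care about extending (\ref{eqn:mapDefn}) to stopping times and about measurability is a welcome tightening of what the paper asserts). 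Your parts (b) and (c) go beyond the statement but likewise reproduce the paper's own subsequent arguments (the $\tilde\xi_t=\xi_t-(K+\epsilon)t$ supremum bound and the first-step $2\times2$ solve for $\E_\sigma[N(\alpha,\gamma)]$), so nothing is missing.
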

    \begin{proof}
        If $K_{n-1}\neq\infty$ then by the Markov additive property, since $\sigma_{n-1}$ is a stopping time, $\{\xi_{\sigma_{n-1}+t}-\xi_{\sigma_{n-1}}\:|\: t\geq0\}$ is independent of $\mathcal{F}_{\sigma_{n-1}}$ given $K_{n-1}$. Moreover, the random variable
        \begin{align*}
            \Delta\sigma_n := \sigma_{n} - \sigma_{n-1}
            &= \inf\{ t\geq0 \::\: \xi_{t+\sigma_{n-1}} - \xi_{\sigma_{n-1}} \geq t(K+\epsilon) + A \},
        \end{align*}
        is a function of $\{\xi_{t+\sigma_{n-1}}-\xi_{\sigma_{n-1}}\::\:t\geq0\}$. Thus, the event $\{K_n=\infty\}=\{\Delta\sigma_n=\infty\}$ is independent of $\mathcal{F}_{\sigma_{n-1}}$ given $K_{n-1}$ and has the same law as the event $\{K_1=\infty\}$ given $K_0$.

        Moreover, if $\Delta\sigma_n<\infty$ then $K_n=J_{\sigma_n}=J_{\sigma_{n-1}+\Delta\sigma_n}$, hence $K_n$ is a function of\\ $\left\{ \left(\xi_{\sigma_{n-1}+t}-\xi_{\sigma_{n-1}},J_{\sigma_{n-1}+t}\right)\::\: t\geq0 \right\}$ and so by the Markov additive property is independent of $\mathcal{F}_{\sigma_{n-1}}$ given $K_{n-1}$ and has the same distribution as $K_1$ given $K_0$. Hence, the sequence $(K_n)_{n\in\N}$ is a time homogeneous Markov chain. By definition of $\sigma_n$, $\infty$ is clearly an absorbing state for $(K_n)_{n\in\N}$.

    \end{proof}

    Let $\eta$ denote the stochastic matrix of $(K_n)_{n\in\N}.$

    \begin{lem}
        \label{lem:etaLimit}
        Under Assumption (I), if $\alpha,\beta\in\{+,-\}$ then $\eta_{\alpha,\beta}\rightarrow0$ as $A\rightarrow\infty$.
    \end{lem}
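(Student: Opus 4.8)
The plan is to prove the three assertions in turn: the time-homogeneous Markov property of $(K_n)_{n\in\N_0}$ with $\infty$ absorbing, the vanishing of the off-diagonal entries of $\eta$ as $A\rightarrow\infty$, and finally the two asymptotic identities for $\E[N(\alpha,\gamma)]$, which will drop out of a short linear computation once the Markov property is available. For the Markov property I would apply the Markov additive property (\ref{eqn:mapDefn}) at the stopping time $\sigma_{n-1}$: conditionally on $\{\sigma_{n-1}<\infty\}$ and on $K_{n-1}=J_{\sigma_{n-1}}$, the shifted process $\{(\xi_{\sigma_{n-1}+t}-\xi_{\sigma_{n-1}},\,J_{\sigma_{n-1}+t}):t\geq0\}$ is independent of $\mathcal{F}_{\sigma_{n-1}}$ and has the law of $(\xi,J)$ issued from $(K_{n-1},0)$. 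Since $\Delta\sigma_n=\sigma_n-\sigma_{n-1}$ is the first passage time of the affine barrier $t\mapsto(K+\epsilon)t+A$ by the increment process $\xi_{\sigma_{n-1}+\cdot}-\xi_{\sigma_{n-1}}$, it is a measurable functional of the shifted process; on $\{\Delta\sigma_n<\infty\}$, $K_n=J_{\sigma_{n-1}+\Delta\sigma_n}$ is likewise such a functional, and on $\{\Delta\sigma_n=\infty\}$ we have $K_n=\infty$ by definition. Hence the conditional law of $K_n$ given $\mathcal{F}_{\sigma_{n-1}}$ depends only on $K_{n-1}$ and coincides with the law of $K_1$ given $K_0=K_{n-1}$, and $\infty$ is absorbing because $\sigma_n=\infty$ whenever $\sigma_{n-1}=\infty$.

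For $\eta_{\alpha,\beta}\rightarrow0$ I would reuse the auxiliary MAP $\tilde{\xi}_t:=\xi_t-(K+\epsilon)t$ from the proof of Lemma \ref{lem:Nfinite}, where it is shown that $\lim_{t\rightarrow\infty}t^{-1}\tilde{\xi}_t=-\epsilon<0$ a.s., so that $M:=\sup_{t\geq0}\tilde{\xi}_t<\infty$ a.s. As $\sigma_1<\infty$ precisely when the increment process of $\tilde{\xi}$ ever reaches level $A$, one has $\eta_{\alpha,+}+\eta_{\alpha,-}=\Prob_\alpha(\sigma_1<\infty)\leq\Prob_\alpha(M\geq A)$, and the right-hand side tends to $0$ as $A\rightarrow\infty$ by continuity of measure; since the entries of $\eta$ are nonnegative, each $\eta_{\alpha,\beta}\rightarrow0$.

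For the statements about $N(\alpha,\gamma)$, fix $\alpha\neq\beta$ and $\gamma\in\{+,-\}$. Conditioning on $K_{k-1}$ and summing over $k$ gives $\E_\sigma[N(\alpha,\gamma)]=\eta_{\alpha,\gamma}(\mathbbm{1}_{\{\sigma=\alpha\}}+\phi_\sigma(\alpha))$ with $\phi_\sigma(\alpha)=\sum_{n\geq1}\Prob_\sigma(K_n=\alpha)$. Because the only states besides the absorbing state $\infty$ are $\alpha$ and $\beta$, we have $\Prob_\sigma(K_n=\alpha)=\eta_{\alpha,\alpha}\Prob_\sigma(K_{n-1}=\alpha)+\eta_{\beta,\alpha}\Prob_\sigma(K_{n-1}=\beta)$ for $n\geq1$, and the analogous relation for $\beta$; summing over $n\geq1$ yields a $2\times2$ linear system for $(\phi_\sigma(\alpha),\phi_\sigma(\beta))$ whose solution is the displayed expression for $\phi_\sigma$. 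Writing $D:=(1-\eta_{\alpha,\alpha})(1-\eta_{\beta,\beta})-\eta_{\beta,\alpha}\eta_{\alpha,\beta}$ and using the identity $D+\eta_{\alpha,\alpha}(1-\eta_{\beta,\beta})+\eta_{\beta,\alpha}\eta_{\alpha,\beta}=1-\eta_{\beta,\beta}$, the factor $\mathbbm{1}_{\{\sigma=\alpha\}}+\phi_\sigma(\alpha)$ collapses to $(\mathbbm{1}_{\{\sigma=\alpha\}}(1-\eta_{\beta,\beta})+\mathbbm{1}_{\{\sigma=\beta\}}\eta_{\beta,\alpha})/D$, giving the closed form for $\E_\sigma[N(\alpha,\gamma)]$ quoted in the statement. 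Letting $A\rightarrow\infty$ and using $\eta_{\alpha,\beta}\rightarrow0$, we get $D\rightarrow1$ and $1-\eta_{\beta,\beta}\rightarrow1$, so that $\E_\alpha[N(\alpha,\gamma)]=\eta_{\alpha,\gamma}(1-\eta_{\beta,\beta})/D\sim\eta_{\alpha,\gamma}$ and $\E_\beta[N(\alpha,\gamma)]=\eta_{\alpha,\gamma}\,\eta_{\beta,\alpha}/D=o(\eta_{\alpha,\gamma})$.

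The step I expect to be the main obstacle is the rigorous justification of the Markov property: one has to check carefully that $\Delta\sigma_n$, and hence $K_n$, is a measurable functional of the post-$\sigma_{n-1}$ increment process, uniformly in $n$, so that the Markov additive property can legitimately be invoked, and that the nesting $\{\sigma_n<\infty\}\subseteq\{\sigma_{n-1}<\infty\}$ is respected so that all the conditional probabilities and $N(\alpha,\gamma)$ are well defined. The remaining ingredients---a continuity-of-measure argument for $\eta_{\alpha,\beta}\rightarrow0$ and the two-by-two linear solve---are routine.
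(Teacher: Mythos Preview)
Your argument for the stated lemma---that $\eta_{\alpha,\beta}\to0$ as $A\to\infty$---is correct and essentially identical to the paper's: both introduce $\tilde{\xi}_t=\xi_t-(K+\epsilon)t$, invoke $\sup_{t\geq0}\tilde{\xi}_t<\infty$ a.s.\ from Lemma~\ref{lem:Nfinite}, write $\eta_{\alpha,+}+\eta_{\alpha,-}=\Prob_\alpha(\sigma_1<\infty)=\Prob_\alpha(\sup_{t\geq0}\tilde{\xi}_t>A)\to0$, and conclude by nonnegativity of each entry. Note, however, that the lemma as stated only asserts this single limit; the Markov-chain property of $(K_n)$ and the asymptotics for $\E[N(\alpha,\gamma)]$ that you also establish are the content of the surrounding Proposition~\ref{prop:markovChainK}, not of this lemma, and while your treatment of those parts likewise mirrors the paper, they are superfluous here.
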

    \begin{proof}
        Let $\alpha,\beta\in\{+,-\}$. From the proof of Lemma \ref{lem:Nfinite} we know that $\sup_{t\geq0}\tilde{\xi}_t<\infty$ a.s. where $\tilde{\xi_t}:=\xi_t-(K+\epsilon)t$. Thus
        \begin{align*}
            \lim_{A\rightarrow\infty}\Prob\left( \sup_{t\geq0}\left\{\xi_t -(K+\epsilon)t \right\} > A \right)
            = \lim_{A\rightarrow\infty}\Prob\left( \sup_{t\geq0}\tilde{\xi}_t > A \right)
            = 0,
        \end{align*}
        however,
        \begin{align*}
            \eta_{\alpha,+}+\eta_{\alpha,-} = \Prob_\alpha\left(\sigma_1<\infty \right) = \Prob_\alpha\left( \sup_{t\geq0}\left\{\xi_t-(K+\epsilon)t\right\}>A\right)
        \end{align*}
        and, since $\eta_{\alpha,\beta}$ is non-negative, this implies $\lim_{A\rightarrow\infty}\eta_{\alpha,\beta} = 0$.

    \end{proof}

    For each $\alpha,\beta\in\{+,-\}$ define the random variable $N(\alpha,\beta):=\sum_{k=1}^\infty\mathbbm{1}_{\{K_{k-1}=\alpha,K_k=\beta\}}$.
    \begin{lem}
        \label{lem:expN}
        Under Assumption (I), if $\alpha,\beta,\gamma\in\{+,-\}$ and $\alpha\neq\beta$ then
        \begin{align*}
            &\E_\alpha[N(\alpha,\gamma)] =
            \frac{\eta_{\alpha,\gamma}(1-\eta_{\beta,\beta})}{(1-\eta_{\beta,\beta})(1-\eta_{\alpha,\alpha}) - \eta_{\beta,\alpha}\eta_{\alpha,\beta}} \sim \eta_{\alpha,\gamma}
        \end{align*}
        and
        \begin{align*}
            \E_\beta[N(\alpha,\gamma)] &= \frac{\eta_{\beta,\alpha}\eta_{\alpha,\gamma}}{(1-\eta_{\alpha,\alpha})(1-\eta_{\beta,\beta}) - \eta_{\beta,\alpha}\eta_{\alpha,\beta}}=o(\eta_{\alpha,\gamma})
        \end{align*}
         as $A\rightarrow\infty$.
    \end{lem}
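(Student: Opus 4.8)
The plan is to prove the three assertions in order: the homogeneous Markov property of $(K_n)_{n\in\N_0}$ with $\infty$ absorbing, the vanishing $\eta_{\alpha,\beta}\to0$, and then an exact formula for $\E_\sigma[N(\alpha,\gamma)]$ from which the two asymptotic statements follow. For the \textbf{Markov property}, one first checks by induction that each $\sigma_n$ is an $(\mathcal{F}_t)$-stopping time: it is the first passage time after the stopping time $\sigma_{n-1}$ of the \cadlag process $t\mapsto\xi_t-(K+\epsilon)t$ above the $\mathcal{F}_{\sigma_{n-1}}$-measurable level $\xi_{\sigma_{n-1}}-(K+\epsilon)\sigma_{n-1}+A$, with $\sigma_n:=\infty$ on $\{\sigma_{n-1}=\infty\}$. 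On $\{\sigma_{n-1}<\infty\}$, the Markov additive property \eqref{eqn:mapDefn} applied at $\sigma_{n-1}$ says the shifted path $\{(\xi_{\sigma_{n-1}+t}-\xi_{\sigma_{n-1}},J_{\sigma_{n-1}+t})\::\:t\geq0\}$ is, conditionally on $J_{\sigma_{n-1}}=K_{n-1}$, independent of $\mathcal{F}_{\sigma_{n-1}}$ and distributed as a MAP started from $K_{n-1}$. Since $\Delta\sigma_n=\sigma_n-\sigma_{n-1}=\inf\{t\geq0:\xi_{\sigma_{n-1}+t}-\xi_{\sigma_{n-1}}\geq(K+\epsilon)t+A\}$, the event $\{K_n=\infty\}=\{\Delta\sigma_n=\infty\}$ and the value $K_n=J_{\sigma_{n-1}+\Delta\sigma_n}$ on $\{\Delta\sigma_n<\infty\}$ are measurable functionals of that shifted path, so $K_n$ depends on $\mathcal{F}_{\sigma_{n-1}}$ only through $K_{n-1}$, with a transition law independent of $n$; since $\{\sigma_n=\infty\}\subseteq\{\sigma_{n+1}=\infty\}$, the state $\infty$ is absorbing.

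For $\eta_{\alpha,\beta}\to0$, note that for $\alpha\in\{+,-\}$ one has $\eta_{\alpha,+}+\eta_{\alpha,-}=\Prob_\alpha(\sigma_1<\infty)=\Prob_\alpha(\sup_{t\geq0}\tilde\xi_t>A)$ with $\tilde\xi_t:=\xi_t-(K+\epsilon)t$. The proof of Lemma \ref{lem:Nfinite} shows $\sup_{t\geq0}\tilde\xi_t<\infty$ a.s., so this probability tends to $0$ as $A\to\infty$; as each of the two non-negative summands is dominated by the sum, $\eta_{\alpha,\beta}\to0$, and likewise $\eta_{\alpha,\alpha},\eta_{\beta,\beta}\to0$.

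For the \textbf{counting of transitions}, fix $\alpha\neq\beta$, $\gamma\in\{+,-\}$ and write $\sigma$ for the starting state. The Markov property gives $\E_\sigma[N(\alpha,\gamma)]=\eta_{\alpha,\gamma}\sum_{n\geq1}\Prob_\sigma(K_{n-1}=\alpha)=\eta_{\alpha,\gamma}(\mathbbm{1}_{\{\sigma=\alpha\}}+\phi_\sigma(\alpha))$ with $\phi_\sigma(\alpha):=\sum_{n\geq1}\Prob_\sigma(K_n=\alpha)$. Since $\infty$ is absorbing and the only other states are $\alpha,\beta$, we have $\Prob_\sigma(K_n=\alpha)=\eta_{\alpha,\alpha}\Prob_\sigma(K_{n-1}=\alpha)+\eta_{\beta,\alpha}\Prob_\sigma(K_{n-1}=\beta)$; summing over $n\geq1$ — legitimate because $\E_\sigma[N]<\infty$, hence all $\phi_\sigma$ are finite, by the geometric bound \eqref{eqn:NGeomBound} — and adjoining the $\alpha\leftrightarrow\beta$ mirror identity produces a $2\times2$ linear system for $(\phi_\sigma(\alpha),\phi_\sigma(\beta))$. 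Its determinant $D:=(1-\eta_{\alpha,\alpha})(1-\eta_{\beta,\beta})-\eta_{\alpha,\beta}\eta_{\beta,\alpha}$ is $\geq0$ (row sums of $\eta$ are $\leq1$) and tends to $1$ as $A\to\infty$, so the system is uniquely solvable for large $A$, and substituting the solution gives $\E_\alpha[N(\alpha,\gamma)]=\eta_{\alpha,\gamma}(1-\eta_{\beta,\beta})/D$ and $\E_\beta[N(\alpha,\gamma)]=\eta_{\alpha,\gamma}\eta_{\beta,\alpha}/D$. Letting $A\to\infty$, the convergences $\eta_{\alpha,\alpha},\eta_{\beta,\beta},\eta_{\alpha,\beta},\eta_{\beta,\alpha}\to0$ give $D\to1$ and $1-\eta_{\beta,\beta}\to1$, so $\E_\alpha[N(\alpha,\gamma)]\sim\eta_{\alpha,\gamma}$, while $\E_\beta[N(\alpha,\gamma)]=\eta_{\alpha,\gamma}(\eta_{\beta,\alpha}/D)=o(\eta_{\alpha,\gamma})$ since $\eta_{\beta,\alpha}\to0$.

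The main obstacle is the first part: checking rigorously that the $\sigma_n$ are stopping times and that the dependence of $K_n$ on the past is exactly through $K_{n-1}$, so that the Markov additive property applied at $\sigma_{n-1}$ genuinely yields a time-homogeneous chain. Once this is established, the remaining parts are routine linear algebra and limits, the only external input being Lemma \ref{lem:Nfinite}, which supplies the finiteness of $\E_\sigma[N]$ needed to sum the recursion term by term.
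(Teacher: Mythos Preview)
Your proposal is correct and follows essentially the same approach as the paper: establish the Markov property of $(K_n)$ via the Markov additive property at the stopping times $\sigma_{n-1}$, obtain $\eta_{\alpha,\beta}\to0$ from the a.s.\ finiteness of $\sup_{t\geq0}\tilde\xi_t$, then write $\E_\sigma[N(\alpha,\gamma)]=\eta_{\alpha,\gamma}(\mathbbm{1}_{\{\sigma=\alpha\}}+\phi_\sigma(\alpha))$, derive the $2\times2$ linear system for $(\phi_\sigma(\alpha),\phi_\sigma(\beta))$ from the one-step recursion, and solve it explicitly. Your additional remarks on the finiteness of $\phi_\sigma$ via the geometric bound \eqref{eqn:NGeomBound} and on the non-degeneracy of the determinant $D$ for large $A$ are welcome clarifications that the paper leaves implicit.
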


    \begin{proof}
        Suppose $\sigma,\alpha,\gamma\in\{+,-\}$. Then, it is easily seen that
        \begin{align*}
            \E_\sigma[N(\alpha,\gamma)]
            &=\sum_{n=1}^\infty \Prob_\sigma(K_n=\gamma\:|\:K_{n-1}=\alpha)\Prob_\sigma(K_{n-1}=\alpha)
            =\eta_{\alpha,\gamma}\left( \mathbbm{1}_{\{\sigma=\alpha\}} + \phi_\sigma(\alpha) \right),
        \end{align*}
        where $\phi_\sigma(\alpha) := \sum_{n=1}^\infty \Prob_\sigma(K_n=\alpha)$. Since $\infty$ is an absorbing state of the Markov Chain $(K_n)_{n\in\N}$ and $\alpha\neq\beta$, for each $n\in\N$,
        \begin{align*}
            \Prob_\sigma(K_n=\alpha)
            &= \Prob_\sigma( K_n=\alpha\:|\: K_{n-1}=\alpha)\Prob_\sigma(K_{n-1}=\alpha) + \Prob_\sigma( K_n=\alpha\:|\: K_{n-1}=\beta )\Prob_\sigma(K_{n-1}=\beta)\\
            &= \eta_{\alpha,\alpha}\Prob_\sigma(K_{n-1}=\alpha) + \eta_{\beta,\alpha}\Prob_\sigma(K_{n-1}=\beta),
        \end{align*}
        then summing up over $n\in\N$ we have
        \begin{align*}
            \phi_\sigma(\alpha) = \eta_{\alpha,\alpha}\left(  \mathbbm{1}_{\{\sigma=\alpha\}} + \phi_\sigma(\alpha) \right) + \eta_{\beta,\alpha}\left( \mathbbm{1}_{\{\sigma=\beta\}} + \phi_\sigma(\beta) \right),
        \end{align*}
        and by symmetry
        \begin{align*}
            \phi_\sigma(\beta) = \eta_{\beta,\beta}\left(  \mathbbm{1}_{\{\sigma=\beta\}} + \phi_\sigma(\beta) \right) + \eta_{\alpha,\beta}\left( \mathbbm{1}_{\{\sigma=\alpha\}} + \phi_\sigma(\alpha) \right).
        \end{align*}
        Solving this system gives
        \begin{align*}
            \phi_\sigma(\alpha) = \frac{\mathbbm{1}_{\{\sigma=\alpha\}}\left(\eta_{\alpha,\alpha}(1-\eta_{\beta,\beta}) + \eta_{\beta,\alpha}\eta_{\alpha,\beta}\right) + \mathbbm{1}_{\{\sigma=\beta\}}\eta_{\beta,\alpha} }{ (1-\eta_{\alpha,\alpha})(1-\eta_{\beta,\beta}) -\eta_{\beta,\alpha}\eta_{\alpha,\beta} },
        \end{align*}
        thus
        \begin{align*}
            \E_\sigma\left[ N(\alpha,\gamma) \right]
            =\frac{\eta_{\alpha,\gamma}\left( \mathbbm{1}_{\{\sigma=\alpha\}}(1-\eta_{\beta,\beta}) + \mathbbm{1}_{\{\sigma=\beta\}}\eta_{\beta,\alpha} \right)}{(1-\eta_{\alpha,\alpha})(1-\eta_{\beta,\beta}) -\eta_{\beta,\alpha}\eta_{\alpha,\beta}},
        \end{align*}
        from which the result of the Lemma is immediate. The asymptotic results then follow from Lemma \ref{lem:etaLimit}.

    \end{proof}
\fi

    In the next lemma we consider the evolution of $\xi$ between the stopping times $\{\sigma_n\}_{n\in\N}$, conditioned on the values of $\{K_n\}_{n\in\N}$.
    \begin{lem}
        \label{lem:xiIndp}
       Suppose that $\E[\xi_{T_2}]\in(-\infty,0)$ and $m,n\in\N$ with $m<n$. Then, conditionally on $K_{m-1}$, $K_m$, $K_{n-1}$ and $K_n$, the increments $\xi_{\sigma_m}-\xi_{\sigma_{m-1}}$ and $\xi_{\sigma_n} - \xi_{\sigma_{n-1}}$ are independent. If $\alpha,\beta\in\{+,-\}$, then conditional on the event $\{K_{n-1}=K_{m-1}=\alpha; K_n=K_m=\beta\}$ the increments $\xi_{\sigma_m}-\xi_{\sigma_{m-1}}$ and $\xi_{\sigma_n} - \xi_{\sigma_{n-1}}$ are equal in distribution and independent. Furthermore, we have for any $l\in\N$ such that $m\neq l$ and any bounded continuous function $f:\R\rightarrow\R^+$,
        \begin{align*}
            \E[ f(\xi_{\sigma_{m}}-\xi_{\sigma_{m-1}}) \:|\: \sigma(K_{m-1},K_m,K_{l-1},K_l) ]
            &=\E[ f(\xi_{\sigma_{m}}-\xi_{\sigma_{m-1}}) \:|\: \sigma(K_{m-1},K_m) ].
        \end{align*}
    \end{lem}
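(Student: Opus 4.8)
\emph{Proof plan.} The whole lemma is driven by the strong Markov additive property applied at the stopping times $\sigma_k$, exactly as it is used in the proof of Proposition~\ref{prop:markovChainK}. The first step is to record the following ``engine''. Fix $k\geq1$; on $\{\sigma_{k-1}<\infty\}$ the shifted path $\Theta^{(k-1)}:=\{(\xi_{\sigma_{k-1}+t}-\xi_{\sigma_{k-1}},J_{\sigma_{k-1}+t}):t\geq0\}$ is, conditionally on $\mathcal{F}_{\sigma_{k-1}}$, independent of $\mathcal{F}_{\sigma_{k-1}}$ and has a law depending only on $K_{k-1}=J_{\sigma_{k-1}}$. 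Moreover, for $j\geq k$ the recursion defining $\sigma_j-\sigma_{j-1}$ inspects only path increments after time $\sigma_{k-1}$, so the whole tail sequence $\big((\xi_{\sigma_j}-\xi_{\sigma_{j-1}},K_j)\big)_{j\geq k}$ is one fixed measurable functional of $\Theta^{(k-1)}$, the same functional for every $k$. Writing $\Delta_j:=\xi_{\sigma_j}-\xi_{\sigma_{j-1}}$, these two remarks give: for every bounded measurable $\Xi$,
\[
  \E\big[\Xi\big((\Delta_j,K_j)_{j\geq k}\big)\,\big|\,\mathcal{F}_{\sigma_{k-1}}\big]=g_\Xi(K_{k-1})\qquad\text{on }\{\sigma_{k-1}<\infty\},
\]
where $g_\Xi$ is deterministic and, by time-homogeneity, does not depend on $k$, while $\mathcal{F}_{\sigma_{k-1}}$ contains $\sigma\big((\Delta_j,K_j):j\leq k-1\big)$. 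Since $\{K_n\in\{+,-\}\}=\{\sigma_n<\infty\}$ and these events decrease in $n$, I restrict throughout to events on which all relevant $\sigma$'s are finite, so every increment below is defined and conditionings on null events are vacuous.

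From the engine the three assertions follow by manipulation of conditional expectations. Taking $\Xi=f(\Delta_k)\mathbbm{1}_{\{K_k=\beta\}}$ gives $\E[f(\Delta_k)\mathbbm{1}_{\{K_k=\beta\}}\mid\mathcal{F}_{\sigma_{k-1}}]=G_{f,\beta}(K_{k-1})$ with $G_{f,\beta}$ deterministic and independent of $k$; with $f\equiv1$ one reads off $G_{1,\beta}(\alpha)=\eta_{\alpha,\beta}$, hence $\E[f(\Delta_k)\mid K_{k-1}=\alpha,K_k=\beta]=\Phi_f(\alpha,\beta):=G_{f,\beta}(\alpha)/\eta_{\alpha,\beta}$ for $\alpha,\beta\in\{+,-\}$ with $\Prob(K_{k-1}=\alpha,K_k=\beta)>0$, and this does not depend on $k$ --- already the ``equal in distribution'' half of the middle statement. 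For the reduction $\E[f(\Delta_m)\mid\sigma(K_{m-1},K_m,K_{l-1},K_l)]=\E[f(\Delta_m)\mid\sigma(K_{m-1},K_m)]$ I would split on $l<m$ and $l>m$. If $l<m$, then $K_{l-1},K_l$ are $\mathcal{F}_{\sigma_{m-1}}$-measurable, so conditioning $\E[f(\Delta_m)\mathbbm{1}_{\{K_{m-1}=\alpha,K_m=\beta,K_{l-1}=\gamma,K_l=\delta\}}]$ on $\mathcal{F}_{\sigma_{m-1}}$ and using the engine with $k=m$ factors out $G_{f,\beta}(\alpha)$; the same computation with $f\equiv1$ factors out $\eta_{\alpha,\beta}$; dividing gives $\Phi_f(\alpha,\beta)$. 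If $l>m$, condition instead on $\mathcal{F}_{\sigma_m}$: the factors $f(\Delta_m),\mathbbm{1}_{\{K_{m-1}=\alpha\}},\mathbbm{1}_{\{K_m=\beta\}}$ are $\mathcal{F}_{\sigma_m}$-measurable, while $\E[\mathbbm{1}_{\{K_{l-1}=\gamma,K_l=\delta\}}\mid\mathcal{F}_{\sigma_m}]$ is a deterministic function of $K_m$ (by the engine with $k=m+1$; trivially so if $l=m+1$), hence equals a constant on $\{K_m=\beta\}$ which cancels between numerator and denominator, again leaving $\Phi_f(\alpha,\beta)$.

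For the independence claims fix $m<n$ and $\alpha,\beta,\gamma,\delta\in\{+,-\}$ with the joint event of positive probability. Conditioning $\E[f_1(\Delta_m)f_2(\Delta_n)\mathbbm{1}_{\{K_{m-1}=\alpha,K_m=\beta,K_{n-1}=\gamma,K_n=\delta\}}]$ on $\mathcal{F}_{\sigma_{n-1}}$ and applying the engine with $k=n$ replaces $f_2(\Delta_n)\mathbbm{1}_{\{K_n=\delta\}}$ by $G_{f_2,\delta}(\gamma)$; conditioning the remaining expectation on $\mathcal{F}_{\sigma_m}$ replaces $\mathbbm{1}_{\{K_{n-1}=\gamma\}}$ by a constant $\rho(\beta)$ on $\{K_m=\beta\}$; so the expectation equals $G_{f_2,\delta}(\gamma)\,\rho(\beta)\,\E[f_1(\Delta_m)\mathbbm{1}_{\{K_{m-1}=\alpha,K_m=\beta\}}]$. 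Running the same two reductions with $f_1\equiv1$, with $f_2\equiv1$ and with both $\equiv1$, and dividing, yields
\[
  \E[f_1(\Delta_m)f_2(\Delta_n)\mid K_{m-1}=\alpha,K_m=\beta,K_{n-1}=\gamma,K_n=\delta]=\Phi_{f_1}(\alpha,\beta)\,\Phi_{f_2}(\gamma,\delta),
\]
and by the reduction of the previous paragraph (with $l=n$, respectively with the roles of $m$ and $l$ interchanged) the two factors are exactly the conditional marginals $\E[f_1(\Delta_m)\mid K_{m-1}=\alpha,K_m=\beta]$ and $\E[f_2(\Delta_n)\mid K_{n-1}=\gamma,K_n=\delta]$; this is the asserted conditional independence, and on $\{K_{m-1}=K_{n-1}=\alpha,\ K_m=K_n=\beta\}$ the two factors coincide, giving the joint ``equal in distribution and independent'' statement. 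The one genuinely delicate point is the first paragraph: justifying the strong Markov additive property at the $\sigma_k$, representing the entire tail increment-sequence as a single functional of the shifted path, and disposing of the exceptional set $\{\sigma_k=\infty\}$; everything after that is bookkeeping with the tower property.
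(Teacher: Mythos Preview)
Your proposal is correct and follows essentially the same approach as the paper: both proofs rest on the strong Markov additive property at the stopping times $\sigma_k$ together with the tower property, split the reduction statement into the cases $l<m$ and $l>m$, and obtain the conditional independence by first conditioning on $\mathcal{F}_{\sigma_{n-1}}$ and then on $\mathcal{F}_{\sigma_m}$. Your ``engine'' is a clean abstraction of what the paper does inline, and you make explicit the ``equal in distribution'' claim (via $\Phi_f$ being independent of $k$) that the paper leaves implicit.
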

    \begin{proof}
        First suppose that $m<l$. Then, we have
        \begin{align*}
            &\E[f\left(\xi_{\sigma_m}-\xi_{\sigma_{m-1}}\right)\:|\: \sigma(K_{m-1},K_m,K_{l-1},K_l)]\\
            &\qquad =\sum_{\gamma,\delta\in\{+,-\}} \mathbbm{1}_{\{K_{l-1}=\gamma,K_{l}=\delta\}}\frac{\E[f\left(\xi_{\sigma_m}-\xi_{\sigma_{m-1}}\right); K_{l-1}=\gamma,K_l=\delta \: | \:\sigma(K_{m-1},K_m)]}{\Prob(K_{l-1}=\gamma,K_l=\delta \:|\: \sigma(K_{m-1},K_m))}.
        \end{align*}
        It follows that, by using the tower property and the fact that $(K_k)_{k\in\N}$ is a Markov chain, we have
        \begin{align*}
            &\E[f\left(\xi_{\sigma_m}-\xi_{\sigma_{m-1}}\right); K_{l-1}=\gamma,K_l=\delta \: | \:\sigma(K_{m-1},K_m)]\\
            &\qquad=\E\left[ f\left(\xi_{\sigma_m}-\xi_{\sigma_{m-1}}\right) \E\left[ \mathbbm{1}_{\{K_{l-1}=\gamma,K_l=\delta\}} \:|\: \mathcal{F}_{\sigma_m}\right] \:|\:\sigma(K_{m-1},K_m)  \right]\\
            &\qquad=\E\left[ f\left(\xi_{\sigma_m}-\xi_{\sigma_{m-1}}\right) \E\left[ \mathbbm{1}_{\{K_{l-1}=\gamma,K_l=\delta\}} \:|\:\sigma(K_m) \right] \:|\:\sigma(K_{m-1},K_m)  \right]\\
            &\qquad= \Prob\left(K_{l-1}=\gamma,K_l=\delta \:|\:\sigma(K_{m-1},K_m) \right) \E\left[ f\left(\xi_{\sigma_m}-\xi_{\sigma_{m-1}}\right) \:|\:\sigma(K_{m-1},K_m) \right],
        \end{align*}
        which, when substituted into the previous equation, gives
        \begin{align*}
            &\E[f\left(\xi_{\sigma_m}-\xi_{\sigma_{m-1}}\right)\:|\: \sigma(K_{m-1},K_m,K_{l-1},K_l)]\\
            &\qquad= \sum_{\gamma,\delta\in\{+,-\}} \mathbbm{1}_{\{K_{l-1}=\gamma,K_l=\delta\}}\E\left[ f\left( \xi_{\sigma_m}-\xi_{\sigma_{m-1}} \right) \:|\:\sigma(K_{m-1},K_m) \right]\\
            &\qquad=\E\left[ f(\xi_{\sigma_m}-\xi_{\sigma_{m-1}} ) \:|\: \sigma(K_{m-1},K_m) \right].
        \end{align*}
        Now, suppose $m>l$ and through a direct application of the Markov additive property we have
        \begin{align*}
            &\E\left[ f(\xi_{\sigma_{m}}-\xi_{\sigma_{m-1}}) \:|\: \sigma(K_{l-1},K_l,K_{m-1},K_m) \right]\\
            &\qquad=\sum_{\alpha\in\{+,-\}}\mathbbm{1}_{\{K_m=\alpha\}}\frac{\E\left[ f(\xi_{\sigma_{m}}-\xi_{\sigma_{m-1}}); K_m=\alpha \:|\: \sigma(K_{l-1},K_l,K_{m-1}) \right]}{\Prob(K_m=\alpha\:|\: \sigma(K_{l-1},K_l,K_{m-1})  ) }\\
            &\qquad=\sum_{\alpha\in\{+,-\}}\mathbbm{1}_{\{K_m=\alpha\}} \frac{\E\left[ f(\xi_{\sigma_{m}}-\xi_{\sigma_{m-1}}) ; K_m=\alpha \:|\: \sigma( K_{m-1} ) \right]}{\Prob(K_m=\alpha\:|\: \sigma (K_{m-1})  )}\\
            &\qquad= \E\left[ f(\xi_{\sigma_{m}}-\xi_{\sigma_{m-1}})  \:|\: \sigma(K_{m-1},K_m) \right].
        \end{align*}

        To see the independence of increments, suppose that $f,g:\R\rightarrow\R^+$ are bounded continuous functions, then
        \begin{align*}
            &\E\left[ f(\xi_{\sigma_n}-\xi_{\sigma_{n-1}})g(\xi_{\sigma_m} - \xi_{\sigma_{m-1}}) \:|\: \sigma(K_{m-1},K_m,K_{n-1},K_n)  \right]\\
            &\qquad= \sum_{\alpha\in\{+,-\}}\mathbbm{1}_{\{K_n=\alpha\}}\frac{\E\left[ f(\xi_{\sigma_n}-\xi_{\sigma_{n-1}})g(\xi_{\sigma_m} - \xi_{\sigma_{m-1}}); K_n=\alpha \:|\: \sigma(K_{m-1},K_m,K_{n-1})  \right]}{\Prob\left( K_n=\alpha \:|\: \sigma(K_{m-1},K_m,K_{n-1}) \right)}.
        \end{align*}
        Then, by the tower property, we get
        \begin{align*}
            &\E\left[ f(\xi_{\sigma_n}-\xi_{\sigma_{n-1}})g(\xi_{\sigma_m} - \xi_{\sigma_{m-1}}); K_n=\alpha \:|\: \sigma(K_{m-1},K_m,K_{n-1})  \right]\\
            &\qquad=\E\left[ g(\xi_{\sigma_m} - \xi_{\sigma_{m-1}}) \E\left[  f(\xi_{\sigma_n}-\xi_{\sigma_{n-1}}); K_n=\alpha\:|\:\mathcal{F}_{\sigma_{n-1}} \right] \:|\: \sigma(K_{m-1},K_m,K_{n-1})  \right]\\
            &\qquad=\E\left[ g(\xi_{\sigma_m} - \xi_{\sigma_{m-1}}) \E\left[  f(\xi_{\sigma_n}-\xi_{\sigma_{n-1}}); K_n=\alpha\:|\:\sigma(K_{\sigma_{n-1}}) \right] \:|\: \sigma(K_{m-1},K_m,K_{n-1})  \right]\\
            &\qquad=\E\left[  f(\xi_{\sigma_n}-\xi_{\sigma_{n-1}}); K_n=\alpha\:|\:\sigma(K_{\sigma_{n-1}}) \right]\E\left[ g(\xi_{\sigma_m} - \xi_{\sigma_{m-1}})  \:|\: \sigma(K_{m-1},K_m)  \right].
        \end{align*}
        Plugging this into the previous equation yields
        \begin{align*}
            &\E\left[ f(\xi_{\sigma_n}-\xi_{\sigma_{n-1}})g(\xi_{\sigma_m} - \xi_{\sigma_{m-1}}) \:|\: \sigma(K_{m-1},K_m,K_{n-1},K_n)  \right]\\
            &\qquad= \sum_{\alpha\in\{+,-\}}\mathbbm{1}_{\{K_n=\alpha\}}\frac{\E\left[  f(\xi_{\sigma_n}-\xi_{\sigma_{n-1}}); K_n=\alpha\:|\:\sigma(K_{\sigma_{n-1}}) \right]\E\left[ g(\xi_{\sigma_m} - \xi_{\sigma_{m-1}})  \:|\: \sigma(K_{m-1},K_m)  \right]}{\Prob\left( K_n=\alpha \:|\: \sigma(K_{m-1},K_m,K_{n-1}) \right)}\\
            &\qquad= \E\left[g(\xi_{\sigma_m} - \xi_{\sigma_{m-1}})  \:|\: \sigma(K_{m-1},K_m)  \right]\sum_{\alpha\in\{+,-\}}\mathbbm{1}_{\{K_n=\alpha\}}\frac{\E\left[  f(\xi_{\sigma_n}-\xi_{\sigma_{n-1}}); K_n=\alpha\:|\:\sigma(K_{\sigma_{n-1}}) \right] }{\Prob\left( K_n=\alpha \:|\: \sigma(K_{n-1}) \right)}\\
            &\qquad= \E\left[g(\xi_{\sigma_m} - \xi_{\sigma_{m-1}})  \:|\: \sigma(K_{m-1},K_m)  \right]\E\left[  f(\xi_{\sigma_n}-\xi_{\sigma_{n-1}}) \:|\:\sigma(K_{\sigma_{n-1}},K_{\sigma_n}) \right].
        \end{align*}
    \end{proof}

\subsection{Lamperti-Kiu processes of strong subexponential type}
    \label{sec:strongSubexpType}
    Strong subexponential distributions are a widely studied class of heavy tailed dsitributions, both because of their mathematical tractability and their appearance in empirical data. We will use \cite{foss2013introduction} as a reference to the background theory of subexponential distributions, within which further discussion of the use of these distributions can be found.

    For a probability distribution $Q:\R^+\rightarrow[0,1]$ define $\bar{Q}(x)\coloneqq 1-Q(x)$ for all $x\in\R^+$. Then, if $\overline{Q*Q}(x)/\overline{Q}(x)\rightarrow 2$ as $x\rightarrow\infty$, we say that $Q$ is a subexponential distribution. It can be shown (for instance see \cite{foss2013introduction}) that all subexponential distributions are also long-tailed.  The distribution $Q$ is a strong subexponential distribution if it also satisfies the property that
\begin{align*}
	\lim_{x\rightarrow\infty}\frac{1}{\bar{Q}(x)}\int_0^x\bar{Q}(x-y)\bar{Q}(y) dy = 2m,
\end{align*}
where $m=\E[X^+]$ and $X$ is a random variable with distribution $Q$. We will refer to a random variable with a (strong) subexponential distribution as a (strong) subexponential random variable.

    Let $\mathcal{S}$ denote the set of real valued subexponential random variables and $\mathcal{S}^*$ denote the subset of $\mathcal{S}$ comprising of strong subexponential random variables.

    For a random variable $X$, recall the definitions:
    \begin{align}
        \label{eqn:defnIntegratedTail}
        G_X(x) := \int_x^\infty \Prob(X>u) du
        \qquad\text{and}\qquad
        H_X(x) := \min\left( 1, G_X(x) \right).
    \end{align}

    If $H_X$ is a subexponential distribution then we write $X\in\mathcal{S}_I$ and from \cite[Chapter 3, pp 55, Theorem 3.27]{foss2013introduction}, we have $\mathcal{S}^*\subset\mathcal{S}_I$.

    For ease of notation, we also define the integrated tails $H(x) := H_{\xi_{T_2}}(x)$ and, for each $\alpha\in\{+,-\}$, define $H_{\xi_\alpha}:=H_{\xi^{(\alpha)}_{\zeta_\alpha}}$ and $H^{(\alpha)} := H_{\xi_\alpha} + H_{U_{-\alpha}}$.  Let us introduce a subset of components of the Lamperti-Kiu decomposition given by $L:=\{ \xi^{(+)}_{\zeta_+} , \xi^{(-)}_{\zeta_-} , U^+, U^- \}$.

    For any two functions $f,g:\R^+\rightarrow\R^+$, we will write $f=\mathcal{O}(g)$ if $\limsup_{x\rightarrow\infty}f(x)/g(x)\in\R$ .
    \begin{defn}
        We will say that a Lamperti-Kiu process is of \textit{strong subexponential type} if $\xi_{T_2}$ is long tailed and there exists $X\in L$ such that $X\in\mathcal{S}^*$ and for all $W\in L\setminus\{X\}$ we have  $\Prob(W>x)=\mathcal{O}(\Prob(X>x))$.
    \end{defn}
    If a Lamperti-Kiu process is of this type, there is a heaviest tailed component of the Lamperti-Kiu decomposition and it is strong subexponential. Denote this component $X\in L$.    Let $B\subseteq\{+,-\}$ be the set of all $\beta\in\{+,-\}$ such that $\limsup_{x\rightarrow\infty}H_X(x)^{-1}H^{(\beta)}(x)\neq0$. Then, for any $b\in B$ and $\beta\in\{+,-\}\setminus B$, we have that, $H^{(\beta)}(x) = o\left( H^{(b)}(x) \right)$ as $x\rightarrow\infty$.

    By the closure properties of $\mathcal{S}^*$ \cite[pp 52, Chapter 3, Corollary 3.16]{foss2013introduction}, it follows that $\xi_{T_2}$ is also strong subexponential with tails and integrated tails respectively given, as $x\rightarrow\infty$, by
    \begin{align*}
        \Prob\left(\xi_{T_2} > x\right)
        \sim \sum_{\beta\in\{+,-\}}\left(\Prob\left( \xi^{(\beta)}_{\zeta_\beta} > x\right) + \Prob\left( U^\beta > x\right) \right)
    \end{align*}
    and
    \begin{align*}
        H(x)
        \sim \sum_{\beta\in\{+,-\}} H^{(\beta)}(x)
        \sim \sum_{\beta\in B} H^{(\beta)}(x).
    \end{align*}

    Recall from Section \ref{sec:finiteness} that $K:=\E[\xi_{T_2}]/\E[T_2]$ hence, when $\E[\xi_{T_2}]\neq0$, it follows that $K\neq0$. The main result of this section, which extends \cite[Section 4, pp 166]{MaulikZwart2006156} to Lamperti-Kiu processes, is the following result.

    \begin{thm}
        \label{thm:subexponTails}
        Suppose that $Y$ is a Lamperti-Kiu process of strong subexponential type such that $\E[\xi_{T_2}]\in(-\infty,0)$. Then,
        \begin{align}
            \label{eqn:expTails}
            \Prob(A_\infty > x) \sim \frac{H(\log(x))}{\E[T_2]|K|},
            \qquad\text{ as }x\rightarrow\infty.
        \end{align}
        Furthermore, $A_\infty$ is long tailed and $\log(A_\infty)$ is subexponential.
    \end{thm}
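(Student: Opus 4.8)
The plan is to prove \eqref{eqn:expTails} by establishing matching upper and lower asymptotic bounds for $\Prob(A_\infty>x)$ and then to deduce the last two assertions. First note that $K<0$ (since $\E[\xi_{T_2}]=\E[T_2]\,K<0$ by hypothesis), so $A_\infty<\infty$ a.s.\ by Theorem~\ref{thm:convAbs}, and that for $x$ large $H(\log x)=G_{\xi_{T_2}}(\log x)$ with $H=H_{\xi_{T_2}}$; moreover $\xi_{T_2}\in\mathcal{S}^*\subset\mathcal{S}_I$ by the closure remark preceding the theorem, and the constant in \eqref{eqn:expTails} equals the positive number $\E[T_2]|K|=|\E[\xi_{T_2}]|$. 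We fix $\epsilon\in(0,-K)$ and $A>A^*$ as in Lemma~\ref{lem:Nfinite}, carry out the argument for each such pair, and let $A\to\infty$ and then $\epsilon\downarrow0$ at the end.

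\emph{Upper bound.} By Lemma~\ref{lem:logAsum}, $\log A_\infty\le(N+1)C+\sum_{n=1}^N(\xi_{\sigma_n}-\xi_{\sigma_{n-1}})^+$. The variable $(N+1)C$ has a finite exponential moment by the geometric bound \eqref{eqn:NGeomBound}, so a routine argument shows it is asymptotically negligible. For the sum, condition on $\sigma\big((K_n)_{n\ge0}\big)$: by Proposition~\ref{prop:markovChainK} the absorption time $N$ is then determined and light-tailed, and by Lemma~\ref{lem:xiIndp} the increments $\xi_{\sigma_n}-\xi_{\sigma_{n-1}}$ become independent with law depending only on $(K_{n-1},K_n)$, so the sum is a randomly indexed sum of at most four i.i.d.\ families, the $(\alpha,\beta)$-family occurring $N(\alpha,\beta)$ times. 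Each increment is (the positive part of) the overshoot above level $A$ of the drifted MAP $\tilde{\xi}_t=\xi_t-(K+\epsilon)t$ along an excursion; estimating the within-excursion supremum cycle by cycle via Lemma~\ref{lem:WillekensExtension2}, handling the crossing overshoot by a single-big-jump estimate, and using that $Y$ is of strong subexponential type, one shows these increments are long-tailed and controlled cycle-wise by $\Prob(\xi_{T_2}>\cdot)$. The single-big-jump principle for randomly stopped sums of subexponential variables \cite{foss2013introduction}, followed by the telescoping identity of Lemma~\ref{lem:tailsumdecomp} to reassemble the cycle-wise contributions, yields
\begin{align*}
\Prob(A_\infty>x)\le\frac{1+\delta(A)}{\E[T_2]|K+\epsilon|}\,G_{\xi_{T_2}}(\log x)\,(1+o(1)),\qquad x\to\infty,
\end{align*}
where $\delta(A)\to0$ as $A\to\infty$ by the asymptotics of $\eta$ and of $\E[N(\alpha,\beta)]$ in Proposition~\ref{prop:markovChainK}. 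Letting $A\to\infty$ and then $\epsilon\downarrow0$ gives $\limsup_x G_{\xi_{T_2}}(\log x)^{-1}\Prob(A_\infty>x)\le(\E[T_2]|K|)^{-1}$.

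\emph{Lower bound.} Iterating the affine identity $A_\infty=A_{T_2}+Y_{T_2}\hat{A}_\infty$ together with the strong Markov additive property at the stopping times $T_{2n}$ (where $J_{T_{2n}}=J_0$) gives the perpetuity representation $A_\infty=\sum_{n=0}^\infty\exp(\xi_{T_{2n}})I_n$, where $(I_n)_{n\ge0}$ are i.i.d.\ copies of $A_{T_2}$ with $I_n$ independent of $(\xi_{T_{2k}})_{0\le k\le n}$, and $(\xi_{T_{2n}})_{n\ge0}$ is a random walk whose increments are distributed as $\xi_{T_2}\in\mathcal{S}^*$ with negative mean $\E[\xi_{T_2}]$. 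Fix $m>0$ and let $\nu$ be the first $n$ with $\xi_{T_{2n}}>\log x+m$; on $\{\nu<\infty\}\cap\{I_\nu>e^{-m}\}$ we have $A_\infty>x$, and since $\{\nu=n\}$ is independent of $I_n$,
\begin{align*}
\Prob(A_\infty>x)\ge\Prob(A_{T_2}>e^{-m})\,\Prob\Big(\sup_{n\ge0}\xi_{T_{2n}}>\log x+m\Big).
\end{align*}
By the classical theorem of Veraverbeke on suprema of random walks with negative drift and strong-subexponential increments \cite{foss2013introduction}, $\Prob(\sup_{n\ge0}\xi_{T_{2n}}>y)\sim|\E[\xi_{T_2}]|^{-1}G_{\xi_{T_2}}(y)$; using long-tailedness of $G_{\xi_{T_2}}$ to absorb the shift by $m$ and then letting $m\to\infty$ so that $\Prob(A_{T_2}>e^{-m})\uparrow1$, we obtain $\liminf_x G_{\xi_{T_2}}(\log x)^{-1}\Prob(A_\infty>x)\ge(\E[T_2]|K|)^{-1}$. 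Combined with the upper bound this proves \eqref{eqn:expTails}. Note that this route never requires finiteness of $\E[A_{T_2}]$, which would fail in the heavy-tailed regime.

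Finally, $\Prob(A_\infty>x)\sim(\E[T_2]|K|)^{-1}H(\log x)$ with $H$ long-tailed and $\log(x+y)-\log x\to0$, so $\Prob(A_\infty>x+y)/\Prob(A_\infty>x)\to1$ for every $y>0$ and $A_\infty$ is long-tailed; and $\Prob(\log A_\infty>y)=\Prob(A_\infty>e^y)\sim(\E[T_2]|K|)^{-1}H(y)$ is a positive constant multiple of the tail of the subexponential distribution $H_{\xi_{T_2}}$ (as $\xi_{T_2}\in\mathcal{S}^*\subset\mathcal{S}_I$), hence subexponential by closure of $\mathcal{S}$ under tail-equivalence up to a positive constant \cite{foss2013introduction}. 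The main obstacle is the upper bound: one must show the randomly stopped sum of crossing increments has tail asymptotic to \emph{exactly} $(\E[T_2]|K+\epsilon|)^{-1}G_{\xi_{T_2}}(\log x)$ up to a factor tending to $1$ as $A\to\infty$, which requires combining the conditional-independence structure of Lemma~\ref{lem:xiIndp}, the transition-count asymptotics of Proposition~\ref{prop:markovChainK}, and the telescoping identity of Lemma~\ref{lem:tailsumdecomp}; it is precisely the absence of independent increments for $\xi$ that makes Lemma~\ref{lem:xiIndp} necessary here, in contrast with the Lévy setting of \cite{MaulikZwart2006156}.
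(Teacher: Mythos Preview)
Your proposal is correct and follows essentially the same route as the paper. The upper bound is exactly the paper's strategy (Lemma~\ref{lem:logAsum}, the conditional-independence structure of Lemma~\ref{lem:xiIndp}, cycle-by-cycle tail control via Lemma~\ref{lem:WillekensExtension2} and Lemma~\ref{lem:tailsumdecomp}, and the transition-count asymptotics of Proposition~\ref{prop:markovChainK}), compressed into a sketch; the paper makes explicit two technical steps you leave implicit, namely the precise conditional tail bound for $Z_n$ (Lemma~\ref{lem:limZalphaBeta}) and a coupling (Lemma~\ref{lem:tailBound}) that replaces $Z_n^+$ by a variable with \emph{exactly} the bounding tail so that Kesten's bound and the subexponential convolution closure can be applied cleanly.

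For the lower bound there is a small but genuine difference in packaging. The paper bounds $A_\infty\ge e^{\xi_{\tau_d(x)}}\hat{A}_\infty$ with $\tau_d(x)=\inf\{T_{2n}:\xi_{T_{2n}}\ge x\}$ and invokes the overshoot-to-infinity fact $\Prob(\xi_{\tau_d(x)}-x>y\mid\tau_d(x)<\infty)\to1$ from \cite{MaulikZwart2006156} before applying Veraverbeke. You instead raise the level to $\log x+m$, use only that $\{\nu=n\}\in\mathcal{F}_{T_{2n}}$ is independent of $I_n$, and send $m\to\infty$ after Veraverbeke, exploiting long-tailedness of $G_{\xi_{T_2}}$ to absorb the shift. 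Both arguments reduce to the same Veraverbeke asymptotic for $\sup_n\xi_{T_{2n}}$; your version trades the overshoot lemma for an extra limit, which is slightly more self-contained. The concluding derivations that $A_\infty$ is long-tailed and $\log A_\infty\in\mathcal{S}$ are the same in both.
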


    Set $Z_n := \xi_{\sigma_n}-\xi_{\sigma_{n-1}}$ for each $n\geq1$. We are now in a position to consider the asymptotic behaviour of the survival function of $Z_n$ conditioned on $K_{n-1}$ and $K_n$, under the assumptions of Theorem \ref{thm:subexponTails}. Recall that for $\alpha\in\{+,-\}$, $H^{(\alpha)} = H_{\xi_\alpha}(x) + H_{U_{-\alpha}}(x)$.
    \begin{lem}
        \label{lem:limZalphaBeta}
        Suppose that $Y$ is a Lamperti-Kiu process of strong subexponential type such that $\E[\xi_{T_2}]\in(-\infty,0)$ and fix $\alpha,\beta\in\{+,-\}$. Then, if $\beta\in B$, for each $n\in\N$
        \begin{align*}
            \limsup_{x\rightarrow\infty}\frac{\Prob(Z_n >x\:|\: K_{n-1}=\alpha,K_n=\beta) }{H^{(\beta)}(x)} \leq \frac{1}{\eta^{(\alpha,\beta)}|K+\epsilon|\E[T_2]}.
        \end{align*}
        Furthermore, if $\beta\in\{+,-\}\setminus B$ and $b\in B$ then, for each $n\in\N$
        \begin{align*}
            \limsup_{x\rightarrow\infty}\frac{\Prob(Z_n >x \:|\: K_{n-1}=\alpha,K_n=\beta) }{H^{(b)}(x)} = 0.
        \end{align*}
    \end{lem}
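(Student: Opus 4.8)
\emph{Strategy of proof.} The first step is to dispose of the index $n$. On $\{K_{n-1}=\alpha\}$ the stopping time $\sigma_{n-1}$ is finite with $J_{\sigma_{n-1}}=\alpha$, and since $\sigma_n$ depends only on the increments $\xi_{\cdot}-\xi_{\sigma_{n-1}}$ and on the elapsed time, the Markov additive property at $\sigma_{n-1}$ gives, for every $n$ and every Borel set $C$,
\[
\Prob(Z_n\in C,\ K_n=\beta\mid K_{n-1}=\alpha)=\Prob_\alpha(\xi_{\sigma_1}\in C,\ \sigma_1<\infty,\ J_{\sigma_1}=\beta),
\]
so that $\Prob(Z_n>x\mid K_{n-1}=\alpha,K_n=\beta)=\eta_{\alpha,\beta}^{-1}p(x)$ with $p(x):=\Prob_\alpha(\xi_{\sigma_1}>x,\sigma_1<\infty,J_{\sigma_1}=\beta)$, independently of $n$. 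It therefore suffices to bound $p(x)$. Since $\xi_t<(K+\epsilon)t+A\le A$ for $t<\sigma_1$, on $\{\xi_{\sigma_1}>x\}$ with $x>A$ the process makes at $\sigma_1$ a single jump of size exceeding $x-A$, and as $J$ is constant on sojourns and $J_{\sigma_1}=\beta$, this jump is either \textbf{(a)} a jump of the Lévy process $\xi^{(\beta)}$ inside the sojourn containing $\sigma_1$, or \textbf{(b)} the jump $U^{(-\beta)}$ at a switching time $T_m=\sigma_1$ into state $\beta$; these are disjoint since $\zeta^k$ is a.s.\ not a jump time of $\xi^k$. Write $p(x)\le p_a(x)+p_b(x)$ accordingly.

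For case (b), decompose over the disjoint events $\{\sigma_1=T_m\}$ with $J_{T_m}=\beta$, which fixes the parity of $m$. On $\{\sigma_1=T_m,\xi_{T_m}>x\}$ we have $\xi_{T_m-}\le(K+\epsilon)T_m+A$, hence $U^{m-1}=\xi_{T_m}-\xi_{T_m-}>x+|K+\epsilon|T_m-A$; moreover $U^{m-1}$ is independent of $(T_m,\mathcal F_{T_m})$ and we may drop the $\mathcal F_{T_m}$-measurable indicator ``no crossing before $T_m$''. This gives $p_b(x)\le\sum_m\Prob(U^{(-\beta)}>x+|K+\epsilon|T_m-A)$ over the relevant $m$; bounding $T_m$ below by $T_{2j}$ for the $j$-th such index and applying Lemma~\ref{lem:tailsumdecomp} to $U^{(-\beta)}$ (with $x$ there replaced by $x-A$) yields $p_b(x)\le(1+o(1))\,G_{U^{(-\beta)}}(x-A)/(|K+\epsilon|\E[T_2])\sim H_{U_{-\beta}}(x)/(|K+\epsilon|\E[T_2])$, using long-tailedness of the integrated tail.

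For case (a), decompose over the sojourns $k$ with $J_{T_k}=\beta$. On $\{\sigma_1\in(T_k,T_{k+1}),\,\xi_{\sigma_1}>x\}$ we have $\xi_{T_k}\le(K+\epsilon)T_k+A$, so $\sup_{s<\zeta^k}\xi^k_s>x+|K+\epsilon|T_k-A$. Conditioning on $\mathcal F_{T_k}$, using that $(\xi^k,\zeta^k)\eqL(\xi^{(\beta)},\zeta_\beta)$ is independent of $\mathcal F_{T_k}$, and applying Lemma~\ref{lem:WillekensExtension2} with $\tau=\zeta_\beta$, a fixed $u_0>0$ and $u=x+|K+\epsilon|T_k-A$, bounds the inner probability by $c_0(u_0)^{-1}\Prob\big(\xi^{(\beta)}_{\zeta_\beta}\ge x+|K+\epsilon|T_k-A-u_0\big)$ with $c_0(u_0)=\Prob(\xi^{(\beta)}_{\zeta_\beta}\ge-u_0)$. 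Dropping the no-crossing indicator, summing over the relevant $k$ via Lemma~\ref{lem:tailsumdecomp} as above, then letting $x\to\infty$ and afterwards $u_0\to\infty$ (so $c_0(u_0)\to1$), gives $\limsup_{x\to\infty}p_a(x)/H_{\xi_\beta}(x)\le(|K+\epsilon|\E[T_2])^{-1}$.

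Combining the two estimates and using $\tfrac{u+v}{p+q}\le\max(\tfrac up,\tfrac vq)$ together with $H^{(\beta)}=H_{\xi_\beta}+H_{U_{-\beta}}$ gives $\limsup_{x\to\infty}p(x)/H^{(\beta)}(x)\le(|K+\epsilon|\E[T_2])^{-1}$, which is the first claim after dividing by $\eta_{\alpha,\beta}$. The same bound yields $\Prob(Z_n>x\mid K_{n-1}=\alpha,K_n=\beta)=\mathcal O(H^{(\beta)}(x))$, and when $\beta\notin B$ and $b\in B$ the definition of $B$ forces $H^{(\beta)}(x)=o(H^{(b)}(x))$, giving the second claim. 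I expect the main obstacle to be the asymptotic bookkeeping delivering exactly the constant $(|K+\epsilon|\E[T_2])^{-1}$: the interchange of the $x\to\infty$ and $u_0\to\infty$ limits in case (a), and the repeated use of Lemma~\ref{lem:tailsumdecomp}, which requires the integrated tails of the \emph{individual} components $\xi^{(\beta)}_{\zeta_\beta}$ and $U^{(-\beta)}$ to be finite (which follows from $\E[\xi_{T_2}]>-\infty$) and long-tailed (which one reads off from the strong subexponential type hypothesis and the closure properties of subexponential laws in \cite{foss2013introduction}).
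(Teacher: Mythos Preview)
Your overall architecture matches the paper's proof: reduce to $n=1$ via the Markov additive property, decompose $\{\xi_{\sigma_1}>x,\ J_{\sigma_1}=\beta\}$ according to whether $\sigma_1$ falls strictly inside a $\beta$-sojourn (and use Lemma~\ref{lem:WillekensExtension2} to pass from the supremum to $\xi^{(\beta)}_{\zeta_\beta}$) or coincides with a switching time $T_m$ (so the overshoot comes from $U^{(-\beta)}$), then sum over sojourns via Lemma~\ref{lem:tailsumdecomp} and let $u_0\to\infty$ at the end.

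The gap is in your invocation of Lemma~\ref{lem:tailsumdecomp}. That lemma requires the random variable $X$ itself to be long-tailed (not its integrated tail), and the direction you need---an \emph{upper} bound on $\sum_m\Prob(X>x-(K+\epsilon)T_{2m})$ in terms of $G_X$---comes precisely from the part of its proof that uses long-tailedness of $X$. But the strong-subexponential-type hypothesis only guarantees that \emph{one} element of $L$ lies in $\mathcal S^*$; the remaining components satisfy merely $\Prob(W>x)=\mathcal O(\Prob(X>x))$, which does not force $W$ to be long-tailed (take $W$ bounded, or with exponential tails dominated by a Pareto $X$). Your closing sentence, that long-tailedness of $\xi^{(\beta)}_{\zeta_\beta}$ and $U^{(-\beta)}$ ``reads off from the strong subexponential type hypothesis and the closure properties of subexponential laws'', is therefore not justified, and neither is the subsequent step $G_{U^{(-\beta)}}(x-A)\sim H_{U_{-\beta}}(x)$ in general.

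The paper fills exactly this gap by a three-way case split: (i) both $\xi^{(\beta)}_{\zeta_\beta}$ and $U^{(-\beta)}$ subexponential (so Lemma~\ref{lem:tailsumdecomp} applies to each, and one combines as you do); (ii) exactly one of them subexponential, in which case the other's tail is $o$ of the subexponential one and its contribution to the sum is absorbed; (iii) neither subexponential, in which case both are dominated termwise by the tail of the $\mathcal S^*$ component indexed by some $b\in B$, and the whole expression is $o(H^{(b)})$---this case is precisely what produces the second assertion of the lemma. Your route to the second assertion (bound by $H^{(\beta)}$, then $H^{(\beta)}=o(H^{(b)})$) presupposes the first bound, so it inherits the same gap. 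Inserting this case analysis into your argument makes it complete and essentially identical to the paper's.
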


    \begin{proof}
        Suppose that $x>A$, let $u_0\in(0,x-A)$ and fix $\alpha,\beta\in\{+,-\}$. For ease of notation, let $\sigma:=\sigma_1$. Then, since $\Prob(Z_n >x\:|\:K_{n-1}=\alpha,K_n=\beta ) = \Prob_\alpha(\xi_{\sigma} > x \:|\: \sigma< \infty, \: J_{\sigma}=\beta)$, we have
        \begin{align*}
            \Prob(Z_n >x\:|\:K_{n-1}=\alpha,K_n=\beta )
            = \frac{1}{\eta^{(\alpha,\beta)}}\sum_{m=0}^\infty \Prob_\alpha\left(\xi_{\sigma}>x;\: T_{m}\leq \sigma < T_{m+1} ;\: J_\sigma=\beta \right).
        \end{align*}
        To bound the elements of the sum first consider the strict inequality $T_{m}<\sigma<T_{m+1}$ for some $m\in\N$, then,
        \begin{align*}
            \Prob_\alpha\left( \xi_\sigma>x;\: T_{m}<\sigma<T_{m+1}; J_\sigma=\beta \right)
            \leq  \Prob_\alpha\left(\sup_{T_{m}< u<T_{m+1}} \xi_u > x ;\: \xi_{T_{m}}<(K+\epsilon)T_{m}+A; J_{T_m}=\beta \right)
        \end{align*}
        and using the Lamperti-Kiu decomposition followed by Lemma \ref{lem:WillekensExtension2} we have
        \begin{align*}
            \Prob_\alpha\left( \xi_\sigma>x;\: T_{m}<\sigma<T_{m+1}; J_\sigma=\beta \right)
            &\leq  \Prob_\alpha\left( \sup_{0< u<\tilde{\zeta}_\beta} \tilde{\xi}^{(\beta)}_u > x - (K+\epsilon)T_m - A; J_{T_m}=\beta \right)\\
            &\leq \frac{\Prob_\alpha\left( \tilde{\xi}^{(\beta)}_{\tilde{\zeta}_\beta} \geq x-(K+\epsilon)T_m-A-u_0; J_{T_m}=\beta \right)}{ \Prob\left( \xi^{(\beta)}_{\zeta_\beta} \geq -u_0 \right) }
        \end{align*}
        where $\tilde{\xi}^{(\beta)}$ and $\tilde{\zeta}_\beta$ are independent copies of the \levy process $\xi^{(\beta)}$ and the exponential random variable $\zeta_\beta$, respectively.

        In the case that $\sigma=T_{m}$,
        \begin{align*}
            \Prob_\alpha\left(\xi_\sigma>x;\:T_{m}=\sigma; J_\sigma=\beta \right)
            &\leq\Prob_\alpha\left( \xi_{T_{m}}>x;\: \xi_{T_m-} \leq (K+\epsilon)T_{m}+A;\:  J_{T_m}=\beta \right)\\
            &\leq\Prob_\alpha\left( \xi_{T_{m}} - \xi_{T_m-} > x - (K+\epsilon)T_{m}-A; J_{T_m}=\beta \right)
            \\&= \Prob_\alpha\left( U^{(-\beta)}>x-(K+\epsilon)T_{m} - A; J_{T_m}=\beta \right).
        \end{align*}

        If $\alpha=\beta$ then there must be an even number of changes of $J$ before $\sigma$, so there exists $m\in\N$ such that $\sigma\in[T_{2m},T_{2m+1})$. Hence combining the two results above gives,
        \begin{align*}
            \Prob\left( Z_n >x\:|\:K_{n-1}=\alpha,K_n=\beta \right)
            \leq& \frac{1}{\eta^{(\alpha,\beta)}} \left(\sum_{m=0}^\infty  \frac{\Prob_\alpha\left(\tilde{\xi}_{\tilde{\zeta}_\beta}^{(\beta)} \geq x-(K+\epsilon)T_{2m}-A-u_0\right)}{\Prob\left(\xi^{(\beta)}_{\zeta_\beta}>-u_0\right)}\right. \\
            &\left. + \sum_{m=0}^\infty\Prob_\alpha\left( U^{(-\beta)} \geq x- (K+\epsilon)T_{2m} - A -u_0\right) \right).
        \end{align*}

        If $\alpha\neq\beta$ then there is an odd number of changes in $J$ before time $\sigma$. However, $T_{2m+1}\geq T_{2m}$ so the inequalities can be weakened to give the same result as the $\alpha=\beta$ case.

        For ease of notation define
        \begin{align*}
            Q_\beta(u_0) := \Prob\left( \xi_{\zeta_\beta}^{(\beta)} \geq -u_0 \right)\leq 1.
        \end{align*}

        If $\xi^{(\beta)}_{\zeta_\beta}$ is long tailed  we can use Lemma \ref{lem:tailsumdecomp} to obtain the asymptotic approximation
        \begin{align*}
             \sum_{m=0}^\infty \Prob_\alpha\left( \tilde{\xi}^{(\beta)}_{\tilde{\zeta}_\beta} \geq x-(K+\epsilon)T_{2m}-A-u_0 \right) &\sim \frac{G_{\xi_\beta}(x)}{|K+\epsilon|\E[T_2]},
        \end{align*}
         as $x\rightarrow\infty$. Similarly, if $U^{(-\beta)}$ is long tailed we have
        \begin{align*}
            \sum_{m=0}^\infty\Prob_{\alpha}\left( U^{(-\beta)}> x-(K+\epsilon)T_{2m} -A -u_0 \right) &\sim \frac{G_{U^{(-\beta)}}(x)}{|K+\epsilon|\E[T_2]},
        \end{align*}
        as $x\rightarrow\infty$. We will consider separately the cases where both of the asymptotics hold, exactly one holds or neither hold.

        In the case where both $\xi^{(\beta)}_{\zeta_\beta}$ and $U^{(-\beta)}$ are subexponential (and hence are long-tailed) for all $\delta>0$, there exists an $R>0$ such that for all $x>R$,
        \begin{align*}
            \frac{\Prob\left(Z^{(\alpha,\beta)} > x\right)}{G_{\xi_\beta}(x) + G_{U^{(-\beta)}}(x)}
            &\leq \frac{1}{\left(G_{\xi_\beta}(x)+G_{U^{(-\beta)}}(x)\right)\eta^{(\alpha,\beta)}}\left(\frac{(1+\delta)G_{\xi_\beta}(x)}{Q_\beta(u_0)|K+\epsilon|\E[T_2]} + \frac{(1+\delta)G_{U^{(-\beta)}}(x)}{|K+\epsilon|\E[T_2]} \right)\\
            &\leq \frac{(1+\delta)}{\eta^{(\alpha,\beta)}Q_\beta(u_0)|K+\epsilon|\E[T_2]}
        \end{align*}
        where the second inequality holds since $Q_\beta(u_0)<1$. Since $\delta$ was arbitrary, taking the $\limsup$ as $x\rightarrow\infty$ yields
        \begin{align*}
            \limsup_{x\rightarrow\infty} \frac{\Prob\left(Z_n >x\:|\:K_{n-1}=\alpha,K_n=\beta\right)}{G_{\xi_\beta}(x) + G_{U^{(-\beta)}}(x)} \leq \frac{1}{\eta^{(\alpha,\beta)}Q_\beta(u_0)|K+\epsilon|\E[T_2]}.
        \end{align*}

        In the case where exactly one of $\xi^{(\beta)}_{\zeta_\beta}$ and $U_{-\beta}$ is subexponential, it asymptotically dominates the other as $x\rightarrow\infty$, since $Y$ is of strong subexponential type. Suppose that it is $\xi^{(\beta)}_{\zeta_\beta}$ that is subexponential and note that the following argument is symmetric in $\xi^{(\beta)}_{\zeta_\beta}$ and $U^{(-\beta)}$. For all $\delta>0$ there exists $\hat{\delta}>0$ such that $\hat{\delta}(1+\hat{\delta})<\delta/2$ and  an $R>0$ such that for all $x>R$ and $n\in\N$,
        \begin{align*}
            \Prob_{\alpha}\left( U^{(-\beta)}> x-(K+\epsilon)T_{2n} -A -u_0\right)
            \leq \hat{\delta}\Prob_\alpha\left( \tilde{\xi}^{(\beta)}_{\tilde{\zeta}_\beta} \geq x-(K+\epsilon)T_{2n}-A-u_0 \right).
        \end{align*}
        Thus, for all $x>R$ for suitably large $R$,
        \begin{align*}
            \sum_{m=0}^\infty \Prob_{\alpha}\left( U^{(-\beta)}> x-(K+\epsilon)T_{2m} -A -u_0\right)
            \leq \frac{\hat{\delta}(1+\hat{\delta})G_{\xi_\beta}(x)}{|K+\epsilon|\E[T_2]},
        \end{align*}
        which gives for $x>R$,
        \begin{align*}
            \frac{\Prob\left(Z_n >x\:|\:K_{n-1}=\alpha,K_n=\beta\right)}{G_{\xi_\beta}(x) + G_{U^{(-\beta)}}(x)}
            &\leq \frac{1}{\eta^{(\alpha,\beta)}G_{\xi_\beta}(x)}\left(\frac{(1+\frac{\delta}{2})G_{\xi_\beta}(x)}{|K+\epsilon|\E[T_2]Q_\beta(u_0)} + \frac{\frac{\delta}{2}G_{\xi_\beta}}{|K+\epsilon|\E[T_2]}  \right)\\
            &\leq \frac{1+\delta}{\eta^{(\alpha,\beta)}|K+\epsilon|\E[T_2]Q_\beta(u_0)}.
        \end{align*}
        And so, since $\delta>0$ was arbitrary, we may take the $\limsup$ first as $\delta\rightarrow0$ and then as $x\rightarrow\infty$ to obtain
        \begin{align*}
            \limsup_{x\rightarrow\infty} \frac{\Prob\left(Z_n >x\:|\:K_{n-1}=\alpha,K_n=\beta\right)}{G_{\xi_\beta}(x) + G_{U^{(-\beta)}}(x)} \leq \frac{1}{\eta^{(\alpha,\beta)}|K+\epsilon|\E[T_2]Q_\beta(u_0)}.
        \end{align*}

        Finally, we consider the case that neither is subexponential. Since $Y$ is of strong subexponential type, the tails of $\tilde{\xi}_{\tilde{\zeta}_\beta}^{(\beta)}$ and $U^{(-\beta)}$ are dominated by the tails of at least one of either $\tilde{\xi}_{\tilde{\zeta}_{b}}^{(b)}$ or $U^{(-b)}$. Denote the dominating random variable by $X$ and let $W\in\{ \tilde{\xi}_{\tilde{\zeta}_\beta}^{(\beta)} , U^{(-\beta)} \}$. Following the above calculation, for all $\delta>0$ there exists an $R>0$ such that for any  $x>R$ and $n\in\N$,
        \begin{align*}
            \Prob_\alpha\left( W \geq x-(K+\epsilon)T_{2n}-A-u_0 \right)
            \leq \delta\Prob_\alpha\left( X>x-(K+\epsilon)T_{2n}-A-u_0 \right).
        \end{align*}
        Then, by using the results of the previous two cases, for suitably large $R>0$ we have
        \begin{align*}
            \sum_{n=0}^\infty \Prob_\alpha\left(W\geq x-(K+\epsilon)T_{2n}-A-u_0 \right)
            &\leq \frac{\delta(1+\delta)G_X(x)}{|K+\epsilon|\E[T_2]H_b(u_0)}
        \end{align*}
                \if{}
                and similarly
                \begin{align*}
                    \sum_{n=0}^\infty \Prob_\alpha\left(U_{-b}\geq x-(K+\epsilon)T_{2n}-A-u_0 \right)
                    &\leq \frac{\delta(1+\delta)G_X(x)}{|K+\epsilon|\E[T_2]Q_b(u_0)}.
                \end{align*}
                \fi
        Hence, for $x>R$,
        \begin{align*}
            \frac{\Prob\left( Z_n >x\:|\:K_{n-1}=\alpha,K_n=\beta \right)}{G_{\xi_{-b}}(x)+G_{U^{(b)}}(x)}
            &\leq \frac{2\delta(1+\delta)}{\eta^{(\alpha,\beta)}|K+\epsilon|\E[T_2]Q_b(u_0)}
        \end{align*}
        and so as $x\rightarrow\infty$, since $\delta>0$ was arbitrary
        \begin{align*}
            \Prob\left( Z^{(\alpha,\beta)}>x \right) = o\left( G_{\xi_{b}}(x)+G_{U^{(-b)}}(x) \right).
        \end{align*}

        Since all the components of the Lamperti-Kiu decomposition are finite we have, for sufficently large $x$, $G_{(\cdot)}(x) = H_{(\cdot)}(x)$. Hence, in the first two cases, we obtain
        \begin{align*}
            \limsup_{x\rightarrow\infty}\frac{\Prob(Z_n >x\:|\:K_{n-1}=\alpha,K_n=\beta )}{H_{\xi_\beta}(x) + H_{U^{(-\beta)}}(x)} \leq \frac{1}{\eta^{(\alpha,\beta)} Q(u_0)|K+\epsilon|\E[T_2]}.
        \end{align*}
        As we are taking the limit $x\rightarrow\infty$, we may also take $u_0\rightarrow\infty$ and use that $Q(u_0)\rightarrow1$ to obtain
        \begin{align*}
            \limsup_{x\rightarrow\infty}\frac{\Prob(Z_n >x\:|\:K_{n-1}=\alpha,K_n=\beta) }{H_{\xi_\beta}(x) + H_{U^{(-\beta)}}(x)} \leq \frac{1}{\eta^{(\alpha,\beta)}|K+\epsilon|\E[T_2]}
        \end{align*}
        whilst in the third case
        \begin{align*}
            \limsup_{x\rightarrow\infty}\frac{\Prob(Z_n >x\:|\:K_{n-1}=\alpha,K_n=\beta) }{H_{\xi_{-\beta}}(x) + H_{U^{(\beta)}}(x)} = 0.
        \end{align*}
        This completes the proof.
    \end{proof}

    \begin{lem}
        \label{lem:tailBound}
        Suppose $Z$ is a real-valued random variable with tail $\Prob(Z\geq x)$ which is bounded above by some function $F(x)$ such that $1-F(x)$ is a true distribution function. Then there exists a random variable $X$, which is a function of $Z$ and an independent uniformly distributed random variable, such that $Z\leq X$ and  $\Prob(X\geq x) = F(x)$.
    \end{lem}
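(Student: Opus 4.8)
The plan is to realise the required object via the classical randomised quantile coupling for the stochastic order. The first step is to reformulate the hypothesis. Writing $G(x):=1-F(x)$, which by assumption is a genuine (right-continuous) distribution function, the bound $\Prob(Z\geq x)\leq F(x)$ is equivalent to $\Prob(Z<x)\geq G(x)$ for every $x$, and since $F_Z(x):=\Prob(Z\leq x)\geq\Prob(Z<x)$ this yields $F_Z(x)\geq G(x)$ for all $x\in\R$. In other words, $Z$ is stochastically dominated by any random variable whose distribution function is $G$; the construction below simply upgrades this distributional statement to a pathwise one.

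Next I would introduce the randomising variable. Let $U$ be uniform on $[0,1]$ and independent of $Z$, and set
\begin{equation*}
    V := F_Z(Z-) + U\big( F_Z(Z) - F_Z(Z-) \big).
\end{equation*}
A short computation, splitting according to whether $Z$ lands at an atom of its law or at a continuity point, shows that $V$ is uniformly distributed on $[0,1]$ and that, almost surely, $F_Z^{-1}(V)=Z$, where $F_Z^{-1}(v):=\inf\{x:F_Z(x)\geq v\}$ denotes the generalised inverse. I would then define
\begin{equation*}
    X := G^{-1}(V), \qquad G^{-1}(v):=\inf\{x:G(x)\geq v\}.
\end{equation*}
By construction $X$ is a Borel function of the pair $(Z,U)$, with $U$ uniform and independent of $Z$; and since $V$ is uniform, $X$ has distribution function $G$, so that $\Prob(X\geq x)=1-G(x-)=F(x)$ for all $x$ (using the left-continuity of $F$, equivalently the right-continuity of $G$). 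This gives the prescribed marginal.

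Finally I would check the monotonicity $Z\leq X$ almost surely, which is the point at which the hypothesis is used. From $F_Z\geq G$ pointwise one gets the inclusion of level sets $\{x:F_Z(x)\geq v\}\supseteq\{x:G(x)\geq v\}$ for each $v$, hence $F_Z^{-1}(v)\leq G^{-1}(v)$ for every $v\in(0,1)$. Evaluating at $v=V$ and using $F_Z^{-1}(V)=Z$ a.s.\ yields $Z=F_Z^{-1}(V)\leq G^{-1}(V)=X$ almost surely, which completes the argument. (Alternatively one could invoke Strassen's theorem to obtain a monotone coupling and then the transfer lemma for standard Borel spaces to express $X$ as a function of $Z$ and an independent uniform, but the explicit quantile construction above is self-contained.)

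The argument is largely routine; the only step that needs care is the second one, namely verifying that the randomised rank $V$ is simultaneously uniform and satisfies $F_Z^{-1}(V)=Z$ almost surely — equivalently, that the coupling returns $Z$ itself rather than merely an equidistributed copy. This is handled by isolating the (conditionally null) event $U=0$ and treating atoms and continuity points of the law of $Z$ separately, and it is precisely this refinement that converts the distributional domination from the first step into the pathwise inequality $Z\leq X$.
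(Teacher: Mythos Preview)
Your proof is correct and follows essentially the same quantile-coupling idea as the paper: both constructions randomise the rank of $Z$ with an independent uniform to obtain a uniform variable, then push it through the generalised inverse of the target tail. The only cosmetic difference is that you work with the distribution function $F_Z$ and the standard distributional transform $V=F_Z(Z-)+U\big(F_Z(Z)-F_Z(Z-)\big)$, whereas the paper carries out the dual computation directly with the survival function $P(x)=\Prob(Z\geq x)$ and derives the uniformity of $U(Z)$ from scratch; your presentation is the more recognisable textbook form and is slightly shorter for that reason.
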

    \begin{proof}
        Let $P(x) := \Prob(Z\geq x)$ and $V\sim \text{Unif}(0,1)$ be independent of $Z$. We will use the notation $P(x^+):=\lim_{y\downarrow x}P(y)$ which exists for all $x\geq0$ since $P$ is non-increasing and bounded from below. Define the random function $U:\R^+\rightarrow[0,1]$ by setting
        $
            U(x) := P(x) - V(P(x)-P(x^+)).
        $
        Let $x_1<x_2$, then since $P$ is non-increasing, $P(x^{+})\leq U(x)\leq P(x)$ for all $x\in\R$ and
        \begin{align*}
            U(x_2) \leq P(x_2) \leq P(x_1^+) = P(x_1) -1(P(x_1) - P(x_1^+))
            \leq U(x_1),
        \end{align*}
        hence $U$ is also non-increasing.

        Furthermore, suppose $U(x_1)=U(x_2)$ for some $x_1<x_2$. Then
        $
            P(x_1^+) \leq U(x_1) = U(x_2) \leq P(x_2) \leq P(x_1^+) ,
        $
        where the last inequality is because $P$ is non-increasing and so $P(x_1^+)=P(x_2)$. If $P(x_1^+)=P(x_1)$, then we have $P(x_2)=P(x_1)$. Otherwise, we have
        \begin{align*}
            U(x_1) > P(x_1^+) \geq P(x_2) > U(x_2).
        \end{align*}
        This is a contradiction. Hence, if $x_1<x_2$ and $U(x_1)=U(x_2)$ then $P(x_1^+)=P(x_1)=P(x_2)$ a.s. and so $\Prob( x_1 \leq Z < x_2) = 0$.

        From this, we can now conclude that, for all $x\in\R$, we have
        \begin{align*}
            \Prob( U(z) \leq U(x) )
            =\Prob(Z \geq x ) + \Prob( Z<x\:;\: U(Z)=U(x) )
            =P(x) + \Prob(Z<x\:;\: U(Z)=U(x)).
        \end{align*}
        However, by the above calculation, we get
        \begin{align*}
            \Prob(Z<x\:;\: U(Z)=U(x))
            \leq \Prob(Z<x\:;\: P(Z)=P(x) )
            =0
        \end{align*}
        and hence, for all $x\in\R$, we have
        $
            \Prob(U(z) \leq U(x) ) = P(x).
        $

        Now, let $q\in[0,1]$ and suppose that there exists $x\in\R^+$ with $P(x^+)=P(x)=q$ so
        \begin{align*}
            \Prob( U(Z) \leq q)
            &= \Prob( U(Z) \leq P(x) )
            =  \Prob( U(Z) \leq U(x) )
            =  P(x)
            =  q.
        \end{align*}
        If there is not such an $x$ then, since $\lim_{x\rightarrow-\infty}P(x)=1$ and $\lim_{x\rightarrow\infty}P(x)=0$, there exists $x\in\R^+$ such that $q\in[P(x^+),P(x))$. For any $y>x$, we have
        $
           U(y) \leq P(y) \leq P(x^+),
        $
        so $U(y)\notin(P(x^+),P(x))$ and similarly, for any $y<x$, we have
        $
            U(y)  \geq U(y^+) \geq P(x)    ,
        $
        so $U(y)\notin(P(x^+),P(x))$. Hence, $U(Z)\in(q,P(x))\subset(P(x^+),P(x))$ implies $Z=x$.

        Next, we want to consider $\Prob( P(x)=U(Z)\:;\: Z\neq x)$. Notice that if $z>x$ then $U(z)\leq P(x^+)<P(x)$ and so $\Prob( P(x)=U(z)\:;\: Z>x)=0$. If $z<x$ and $P(x)=U(z)$ then $P(x)=U(z)\geq P(z^+)\geq P(x)$ and so $P(x)=P(z^+)$. However, if there is a discontinuity point $y\in[z,x)$ then $U(z)\geq P(z^+)\geq P(y)>P(y^+)\geq P(x)$ so $\Prob(P(x)=U(z)\:;\: Z<x)=0$ we conclude $P(z)=P(z^+)=P(x)$ and hence,
        $
            \Prob( P(x)=U(Z) \:;\: Z\neq x ) = 0.
        $

        From this and since $V$ is uniformly distributed on $[0,1]$, we have
        \begin{align*}
            \Prob(U(Z)\in(q,P(x)])
            &= \Prob(Z=x)\Prob(U(x)\in(q,P(x)]) + \Prob(Z\neq x; U(Z)=P(x) )\\
            &= \Prob(Z=x)\Prob\left( \frac{q-P(x^+)}{P(x)-P(x^+)} < V \leq \frac{P(x)-P(x^+)}{P(x)-P(x^+)}\right) + 0\\
            &= \left(P(x)-P(x^+)\right)\frac{q-P(x^+)}{P(x)-P(x^+)},
        \end{align*}
        hence,
        \begin{align*}
             \Prob( U(Z) \leq q )
            &= \Prob( U(Z) \leq P(x) ) - \Prob( U(Z) \in (q,P(x)] )\\
            &= P(x) - (P(x) - P(x^+))\frac{(P(x)-q)}{P(x) - P(x^+)}
            =q.
        \end{align*}
        Hence, $U(Z)$ is uniformly distributed on $[0,1]$. Using \cite[pp 7, Proposition 3.1]{Embrechts2013}, the random variable defined by $X:= U^{-}(Z):=\inf\left\{ x\in\R^+ \:\middle|\: F(x) < U(Z) \right\}$ has the distribution $F$.
        Moreover, since $F(x)\geq P(x)$ for all $x\in\R^+$, we have
        \begin{align*}
            F(Z)
            \geq P(Z)
            \geq P(Z) - V(P(Z)-P(Z^+))
            = U(Z)
        \end{align*}
        and, since $F$ is non-increasing, we have
        $
            X = \inf\left\{ x\in\R^+ \:\middle|\: F(x) < U(Z) \right\} \geq Z,
        $
        as required.
    \end{proof}

    We are now able to derive the upper bound of Theorem \ref{thm:subexponTails}.
    \begin{lem}
        \label{lem:expTailsUpper}
        If $Y$ is a Lamperti-Kiu process of strong subexponential type, then the right tail of $A_\infty$ satisfies
        \begin{align}
            \label{eqn:expTailsUpper}
            \limsup_{x\rightarrow\infty}\frac{\Prob(\log(A_\infty)>x)}{H(x)} \leq \frac{1}{|K|\E[T_2]},
        \end{align}
        where $H$ is the integrated tail from Theorem \ref{thm:subexponTails}.
    \end{lem}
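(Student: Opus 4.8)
The plan is to feed the deterministic bound of Lemma~\ref{lem:logAsum} into a subexponential random-sum estimate, organised according to the transitions of the embedded chain $(K_n)_{n\geq0}$. Throughout we fix $\epsilon\in(0,-K)$ and $A>A^*$, and we keep the standing assumption $\E[\xi_{T_2}]\in(-\infty,0)$ of this subsection (equivalently $K<0$), under which all the framework lemmas apply. Writing $Z_n:=\xi_{\sigma_n}-\xi_{\sigma_{n-1}}$ and $W_n:=Z_n^++C$, Lemma~\ref{lem:logAsum} reads $\log A_\infty\leq C+\sum_{n=1}^N W_n$. Since $\xi_{T_2}\in\mathcal S^*$, its integrated tail $H$ is subexponential and hence long-tailed, so $H(x-C)\sim H(x)$ and it suffices to bound $\Prob\big(\sum_{n=1}^N W_n>x\big)$ from above.

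First I would condition on the whole trajectory of $(K_n)_{n\geq0}$, which by Proposition~\ref{prop:markovChainK} is a Markov chain absorbed at $\infty$; given the trajectory, $N=\max\{n:K_n\neq\infty\}$ is deterministic and, by Lemma~\ref{lem:xiIndp}, $W_1,\dots,W_N$ are independent, the conditional law of $W_n$ depending only on the pair $(K_{n-1},K_n)$. For a pair $(\alpha,\beta)$ with $\beta\in B$, Lemma~\ref{lem:limZalphaBeta} gives, for any $\delta>0$ and all large $x$,
\begin{equation*}
  \Prob\big(W_n>x\mid K_{n-1}=\alpha,\,K_n=\beta\big)\ \leq\ \frac{1+\delta}{\eta_{\alpha,\beta}\,|K+\epsilon|\,\E[T_2]}\,H^{(\beta)}(x).
\end{equation*}
Since $H^{(\beta)}\leq\sum_{\gamma}H^{(\gamma)}\sim H$ and $H$ is subexponential, using Lemma~\ref{lem:tailBound} together with the closure properties of $\mathcal S$ recalled before Theorem~\ref{thm:subexponTails} I would couple $W_n$, conditionally on $(K_{n-1},K_n)$, with variables $\widehat W_n\geq W_n$ that are independent, i.i.d.\ within each transition type, and subexponentially distributed with the right-hand side above as their tail. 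For the pairs with $\beta\notin B$, Lemma~\ref{lem:limZalphaBeta} shows the conditional tails are $o(H(x))$; as $N$, hence each $N(\alpha,\beta)$, has finite exponential moments by \eqref{eqn:NGeomBound}, those terms contribute only $o(H(x))$ and may be discarded.

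For each fixed trajectory, $\sum_{n\leq N}\widehat W_n$ is then a sum of a deterministic number of independent, subexponentially distributed summands drawn from finitely many transition types, so the single-big-jump principle for such sums (see \cite{foss2013introduction}) yields both the relative asymptotic upper bound $\Prob(\sum_{n\leq N}\widehat W_n>x)\leq(1+o(1))\sum_{n=1}^{N}\Prob(\widehat W_n>x\mid K_{n-1},K_n)$ and a Kesten-type bound $\leq c_{\delta'}(1+\delta')^N H(x)$, uniform in the trajectory and in $x$, for some $\delta'>0$ with $\E[(1+\delta')^N]<\infty$ (which holds by \eqref{eqn:NGeomBound}). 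Averaging over the trajectory by a standard truncation argument and using $\sum_{n\geq1}\Prob(K_{n-1}=\alpha,K_n=\beta)=\E[N(\alpha,\beta)]$, I obtain
\begin{equation*}
  \limsup_{x\to\infty}\frac{\Prob\big(\sum_{n=1}^N W_n>x\big)}{H(x)}\ \leq\ \limsup_{x\to\infty}\ \frac{1}{H(x)}\sum_{\alpha\in\{+,-\}}\sum_{\beta\in B}\E[N(\alpha,\beta)]\,\frac{1+\delta}{\eta_{\alpha,\beta}\,|K+\epsilon|\,\E[T_2]}\,H^{(\beta)}(x).
\end{equation*}
It is essential to combine the sum over $\beta$ \emph{before} taking the $\limsup$: for $\alpha=J_0$, Proposition~\ref{prop:markovChainK} gives $\E[N(J_0,\beta)]\sim\eta_{J_0,\beta}$ as $A\to\infty$, so these terms sum to $\frac{(1+\delta)(1+o_A(1))}{|K+\epsilon|\E[T_2]}\sum_{\beta\in B}H^{(\beta)}(x)\sim\frac{(1+\delta)(1+o_A(1))}{|K+\epsilon|\E[T_2]}H(x)$, while for $\alpha\neq J_0$ the bound $\E[N(-J_0,\beta)]=o(\eta_{-J_0,\beta})$ makes the contribution $o_A(1)$. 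As the left-hand side depends on none of $A,\epsilon,\delta$, letting first $\delta\downarrow0$, then $A\to\infty$, then $\epsilon\downarrow0$ (so $|K+\epsilon|\to|K|$) gives $\limsup_{x\to\infty}\Prob(\log A_\infty>x)/H(x)\leq 1/(|K|\,\E[T_2])$.

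The step I expect to be the main obstacle is the rigorous passage from the conditional (given the chain) single-big-jump asymptotic to the unconditional one: this needs a Kesten-type bound for sums of \emph{non-identically-distributed} subexponential summands, available here only because there are finitely many summand types, each with tail comparable to the subexponential $H$, together with the exponential moment of $N$ from \eqref{eqn:NGeomBound}; it also requires checking that the integrated tails $H^{(\beta)}$ for $\beta\in B$ are asymptotically tails of subexponential distributions, which one reads off from the strong-subexponential-type hypothesis and the closure properties of $\mathcal S$ recorded before Theorem~\ref{thm:subexponTails}. A secondary subtlety, already flagged above, is that the sum over transition types must be recombined before passing to the limit in $x$, so that the exact cancellation $\E[N(\alpha,\beta)]/\eta_{\alpha,\beta}\to1$ and the identity $\sum_{\beta\in B}H^{(\beta)}\sim H$ can both be exploited to recover the sharp constant $1/(|K|\,\E[T_2])$.
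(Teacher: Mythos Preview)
Your proposal is correct and follows essentially the same route as the paper: condition on the trajectory of $(K_n)$, dominate the $W_n$ via Lemma~\ref{lem:tailBound} by independent subexponential variables whose tails are read off Lemma~\ref{lem:limZalphaBeta}, control the random sum by a Kesten bound together with the geometric tail \eqref{eqn:NGeomBound} of $N$ (truncating at a large level of $N$), collect the contributions through $\E[N(\alpha,\beta)]$, and then optimise over $\delta$, $A$ and $\epsilon$. The only place where your sketch is slightly quicker than the paper is the treatment of transitions with $\beta\notin B$: the bare statement ``tails $o(H(x))$ plus exponential moments of $N$'' is not by itself enough for a random sum, and the paper makes this rigorous exactly as you anticipate in your final paragraph, by first dominating those summands by a subexponential variable with tail comparable to $H^{(b)}$ and then applying Kesten's bound.
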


    \begin{proof}
        Fix $\sigma\in\{+,-\}$ and let $\delta_2>0$. For sufficiently large $A>0$, by Proposition \ref{prop:markovChainK}, we know \newline $\E_\sigma[N(\alpha,\beta)]/\eta^{(\alpha,\beta)}\leq \mathbbm{1}_{\{\sigma=\alpha\}} + \delta_2$. Now fix such an $A>0$ and let $\delta_1>0$.

        From Lemma \ref{lem:logAsum}, we have
        $
            \log(A_\infty)
            \leq (N+1)C + \sum_{i=1}^NZ_i^+.
        $

        For each $i\in\N$, we have a tail estimate for $Z_i^+$, given $(K_n)_{n\in\N_0}$, from Lemma \ref{lem:limZalphaBeta} which, used in conjunction with Lemma \ref{lem:tailBound}, gives the existence of random variables $X_i(k)$  for each $k\in\cup_{n\in\N}\{+,-\}^n$ with $k_0=\sigma$ such that:
        \begin{enumerate}
            \item each $X_i(k)$ is a function of $Z_i$ and a random variable independent of the rest of the system;
            \item $X_i(k)\geq (Z_i^+ +C)\mathbbm{1}_{\{N=n;(K_0,\cdots,K_n)=k\}}$;
            \item $X_i(k)$ has tails given by $\min\left(1,H^{(k_i)}(x)(\eta^{(k_{i-1},k_i)}|K+\epsilon|\E[T_2]|)^{-1}\right)$ if $k_i\in B$ and tails which are $o(\min(1,H^{(b)}(x)))$ for $b\in B$ if $k_i\notin B$.
        \end{enumerate}
        Then, summing up over the sample paths of $(K_n)_{n\in\N_0}$, we have the upper bound
        \begin{align*}
            \Prob_\sigma\left( \sum_{i=1}^N Z_i^+ +C>x \right)
            &\leq\sum_{n\in\N}\sum_{\substack{ k\in\{+,-\}^{n+1} \\ k_0=\sigma } }\Prob_\sigma\left( \sum_{i=1}^n X_i(k)>x; N=n; (K_0,\cdots,K_n)=k \right).
        \end{align*}
        For ease of notation, let $\bar{\eta}=\max_{\alpha,\beta\in\{+,-\}}\eta_{\alpha,\beta}$, $\underline{\eta}=\min_{\alpha,\beta\in\{+,-\}}\eta_{\alpha,\beta}$ and $d\in(0,(1-2\bar{\eta})/2\bar{\eta})$.

        For each $\alpha,\beta\in\{+,-\}$, $n\in\N$ and $k\in\{+,-\}^{n+1}$ such that $k_0=\sigma$ let $n_{\alpha,\beta}(k):=\sum_{i=0}^n\mathbbm{1}_{\{k_{i-1}=\alpha,k_i=\beta\}}$. Then, by Lemma \ref{lem:xiIndp}, given the event $\{N=n;(K_0,\dots,K_n)=k\}$, the sum $Y_{\alpha,\beta}(k):=\sum_{i=1}^nX_i\mathbbm{1}_{\{k_{i-1}=\alpha,k_i=\beta\}}$ is a sum of $n_{\alpha,\beta}(k)$ i.i.d. random variables hence, in the case $\beta\in B$, from Kesten's bound \cite[pp 67, Section 3.10, Theorem 3.34]{foss2013introduction},
        \begin{multline*}
            \Prob_\sigma\left( \sum_{i=1}^nX_i(k)\mathbbm{1}_{\{k_{i-1}=\alpha,k_i=\beta\}} >x\:\middle|\:N=n;\:(K_0,\dots,K_n)=k  \right)\\
            \leq c(d)(1+d)^{n_{\alpha,\beta}(k)}\Prob_\sigma(X_1((\alpha,\beta))>x)
            \leq \frac{c(d)(1+d)^nF(x)}{\underline{\eta}|K+\epsilon|\E[T_2]}.
        \end{multline*}
        In the case $\beta\notin B$, since $Y$ is of strong subexponential type, for any $b\in B$,
        \begin{multline*}
            \Prob_\sigma\left( \sum_{i=1}^nX_i(k)\mathbbm{1}_{\{k_{i-1}=\alpha,k_i=\beta\}} >x\:\middle|\:N=n;\:(K_0,\dots,K_n)=k  \right)\\
            \leq \Prob_\sigma\left( \sum_{i=1}^nW_i\mathbbm{1}_{\{k_{i-1}=\alpha,k_i=\beta\}} >x\:\middle|\:N=n;\:(K_0,\dots,K_n)=k  \right)
            \leq \frac{c(d)(1+d)^nH(x)}{\underline{\eta}|K+\epsilon|\E[T_2]},
        \end{multline*}
        where for each $i\in\N$ the random variable $W_i$ depends only on $X_i(k)$ and has the distribution of $X_1((\alpha,b))$. We can now use corollaries 3.16 and 3.18 in \cite[pp 52, Chapter 3]{foss2013introduction} to sum the $Y_{\alpha,\beta}$ and obtain the bound
        \begin{align*}
            &\Prob_\sigma\left( \sum_{i=1}^nX_i(k) >x\:\middle|\: N=n;(K_0,\cdots,K_n)=k \right)\\
            &\qquad=\Prob_\sigma\left( \sum_{\alpha,\beta\in\{+,-\}}\sum_{i=1}^nX_i(k)\mathbbm{1}_{\{k_{i-1}=\alpha,k_i=\beta\}} >x \:\middle|\: N=n;(K_0,\cdots,K_n)=k \right)\\
            &\qquad\leq \frac{4c(d)(1+d)^nH(x)}{\underline{\eta}|K+\epsilon|\E[T_2]}.
        \end{align*}

        Using the bound on the distribution of $N$ from equation (\ref{eqn:NGeomBound}), we see that there is an $M\in\N$ such that $\E[\underline{\eta}^{-1}4c(d)(1+d)^N;N>M]\leq \delta_1$ for sufficiently small $d$. This then gives
        \begin{align*}
            \Prob_\sigma\left( \sum_{i=1}^N X_i((K_n)_{n\in\N})>x; N>M \right) \leq \frac{\delta_1 H(x)}{|K+\epsilon|\E[T_2]}.
        \end{align*}

        Moreover, by \cite[pp 52, Chapter 3, Corollary 3.16]{foss2013introduction}, for all $n\leq M$ and $k\in\{+,-\}^{n+1}$ with $k_0=\sigma$ there exists $R_{n,k}>0$ such that for all $x>R_{n,k}$
        \begin{multline*}
            \Prob_\sigma\left( \sum_{i=1}^n X_i(k)>x\:\middle|\: N=n; (K_0,\cdots,K_n)=k \right)
            \\ \leq \left( \left(1+\frac{\delta_1}{2}\right)\sum_{\alpha\in\{+,-\}}\sum_{\beta\in B} n_{\alpha,\beta}(k)\frac{H^{\beta}(x)}{\eta^{(\alpha,\beta)}|K+\epsilon|\E[T_2]}\right)
            \bar{*} \Prob_\sigma\left( \sum_{i=1}^n \mathbbm{1}_{\{k_i \notin B\}} X_i(k) > x \right)(x),
        \end{multline*}
        where for two survival functions $\bar{F}$ and $\bar{G}$ we write $\bar{F}\bar{*}\bar{G}$ for the survival function of the convolution of the distribution functions $F \coloneqq 1-\bar{F}$ and $G\coloneqq 1-\bar{G}$. Then, since $\Prob_\sigma\left( \mathbbm{1}_{\{k_i\notin B\}}X_i(k))>x \right) = o\left( H^{\beta}(x) \right)$ for any $\beta\in B$, by \cite[pp 52, Chapter 3, Corollary 3.18]{foss2013introduction} we have
        \begin{align*}
            \Prob_\sigma\left( \sum_{i=1}^n X_i(k)>x\:\middle|\: N=n; (K_0,\cdots,K_n)=k \right)
             \leq \left(1+\delta_1\right)\sum_{\alpha\in\{+,-\}}\sum_{\beta\in B} n_{\alpha,\beta}(k)\frac{H^{\beta}(x)}{\eta^{(\alpha,\beta)}|K+\epsilon|\E[T_2]}.
        \end{align*}

        Since there are finitely many such pairs $(n,k)$ with $n\leq M$ we can take $R:=\max_{n\leq M}R_{n,k}$ so that for all $x>R$, we get
        \begin{align*}
            &\sum_{n=1}^{M}\sum_{ \substack{ k\in\{+,-\}^{n+1} \\ k_0=\sigma }} \Prob_\sigma\left( \sum_{i=1}^nX_i(k)>x ; N=n; (K_0,\cdots,K_n)=k \right)\\
            &\leq (1+\delta_1)\sum_{n=1}^{M}\sum_{\substack{ k\in\{+,-\}^{n+1} \\ k_0=\sigma }}\sum_{\alpha\in\{+,-\}}\sum_{\beta\in B}n_{\alpha,\beta}(k)\frac{H^{(\beta)}(x)}{\eta^{(\alpha,\beta)}|K+\epsilon|\E[T_2]}\Prob_\sigma(N=n;(K_0,\cdots,K_n)=k)  \\
            &= (1+\delta_1)\sum_{\alpha\in\{+,-\}}\sum_{\beta\in B}\E_\sigma[n_{\alpha,\beta}(k);N\leq M]\frac{H^{(\beta)}(x)}{\eta^{(\alpha,\beta)}|K+\epsilon|\E[T_2]}\\
            &\leq \frac{(1+\delta_1)}{|K+\epsilon|\E[T_2]}\sum_{\alpha\in\{+,-\}}\sum_{\beta\in B}\frac{\E_\sigma[n_{\alpha,\beta}(k)]H^{(\beta)}(x)}{\eta^{(\alpha,\beta)}}.
        \end{align*}
        Next, by our choice of $A$ we have $\E_\sigma[n(\alpha,\beta)]/\eta^{(\alpha,\beta)}\leq \mathbbm{1}_{\sigma=\alpha} + \delta_2$ thus
        \begin{align*}
            &\sum_{n=1}^{M}\sum_{\substack{ k\in\{+,-\}^{n+1} \\ k_0=\sigma }} \Prob_\sigma\left( \sum_{i=1}^nX_i(k)>x ; N=n; (K_0,\cdots,K_n)=k \right)\\
            &\qquad\leq \frac{(1+\delta_1)}{|K+\epsilon|\E[T_2]}\sum_{\alpha\in\{+,-\}}\sum_{\beta\in B}\left( \mathbbm{1}_{\{\sigma=\alpha\}} + \delta_2 \right)H^{(\beta)}(x)\\
            &\qquad= \frac{(1+\delta_1)}{|K+\epsilon|\E[T_2]}(1+2\delta_2)\sum_{\beta\in B}H^{(\beta)}(x)\\
            &\qquad\leq\frac{(1+\delta_1)^2}{|K+\epsilon|\E[T_2]}(1+2\delta_2)H(x),
        \end{align*}
        where the last inequality holds for sufficiently large $x$ since  $Y$ is of strong subexponential type. Hence, for all $x>R$, we have
        \begin{align*}
            \Prob_\sigma\left( \sum_{i=1}^N \left( Z_i^+ +C \right) >x \right)
            \leq \frac{(1+\delta_1)^2(1+2\delta_2)}{|K+\epsilon|\E[T_2]}F(x) +\frac{\delta_1 H(x)}{|K+\epsilon|\E[T_2]},
        \end{align*}
        and so from the definition of $\limsup$,
        \begin{align*}
            \limsup_{x\rightarrow\infty}\frac{\Prob_\sigma\left( C + \sum_{i=1}^N \left(Z_i^++C\right)>x \right)}{H(x-C)}
            \leq \frac{1+2\delta_2}{|K+\epsilon|\E[T_2]}.
        \end{align*}
        However, because $H$ is long tailed $\lim_{x\rightarrow\infty}H(x-C)/H(x) =1$ and therefore,
        \begin{align*}
            \limsup_{x\rightarrow\infty}\frac{\Prob_\sigma\left( C + \sum_{i=1}^N \left(Z_i^++C\right)>x \right)}{H(x)}
            \leq \frac{1+2\delta_2}{|K+\epsilon|\E[T_2]}.
        \end{align*}
        Then, by comparison
        \begin{align*}
            \limsup_{x\rightarrow\infty} \frac{\Prob_\sigma(\log(A_\infty)>x)}{H(x)}
            &\leq \frac{1+2\delta_2}{|K+\epsilon|\E[T_2]}
        \end{align*}
        and since both $\epsilon$ and $\delta_2$ were arbitrary the result follows.
\end{proof}

    It remains to show the lower bound for $\liminf_{x\rightarrow\infty}\Prob(A_\infty>x)$ also holds. To this end, for each $x\in\R$, define the stopping time
    \begin{equation}
        \tau_d(x) := \inf\left\{ T_{2n} \:|\:n\in\N, \xi_{T_{2n}} \geq x \right\}
    \end{equation}
    and notice that $\tau_d(x)<\infty$ if and only if $\sup_{n\in\N}\xi_{T_{2n}}\geq x$. Furthermore, $J_{\tau_d(x)}=J_0$ whenever $\tau_d(x)<\infty$. Since $\xi_{T_2}$ is strong subexponential its integrated tail, $H$, is subexponential  and thus, by \cite[pp 2, Theorem 1(ii)]{ZacharyVeraverbeke}, $\Prob(\tau_d(x)<\infty)$ is also subexponential. Then, by considering the random walk $\left(\xi_{T_{2n}}\right)_{n\in\N}$ in the place of $\left( \xi_n \right)_{n\in\N}$ in the proof of \cite[pp 12, Lemma 4.3]{MaulikZwart2006156}, we have that for every $y>0$,
    \begin{equation}
        \label{eqn:txitaudLongTailed}
        \lim_{x\rightarrow\infty} \Prob\left(\xi_{\tau_d(x)}-x>y \:\middle|\: \tau_d(x)<\infty \right) = 1.
    \end{equation}

    We are now able to prove Theorem \ref{thm:subexponTails}.

    \begin{proof}[Proof of Theorem \ref{thm:subexponTails}]
        Equation (\ref{eqn:expTails}) of Theorem \ref{thm:subexponTails} follows from the inequality (\ref{eqn:expTailsUpper}) of Lemma \ref{lem:expTailsUpper} and the inequality
        \begin{equation}
                \liminf_{x\rightarrow\infty} \frac{\Prob\left(\log\left(A_\infty\right) > x\right)}{H(x)} \geq \frac{1}{|\E[\xi_{T_2}]|} = \frac{1}{\E[T_2]|K|},
        \end{equation}
        which we will now prove.

        The following inequality is immediate
        \begin{align*}
            \Prob(\log A_\infty> x)
            &\geq \Prob\left( \log\left( \int_{\tau_d(x)}^\infty |Y_t| dt \right) > x; \tau_d(x)<\infty \right).
        \end{align*}
        By applying the Markov additive property and recalling $J_{\tau_d(x)}=J_0$ gives
        \begin{align*}
            \Prob(\log A_\infty> x)
            &\geq\Prob\left( \xi_{\tau_d(x)}  + \log(\hat{A}_{\infty,J_0}) >x \:\middle|\: \tau_d(x)<\infty \right)\Prob(\tau_d(x)<\infty),
        \end{align*}
        where $\hat{A}_{\infty,j}$ is an independent and identically distributed copy of $A_\infty$ with $\hat{J}_0=j$. Then by applying equation (\ref{eqn:txitaudLongTailed}), we have
        \begin{align*}
            \liminf_{x\rightarrow\infty} \frac{\Prob\left( \log A_{\infty} > x \right)}{H(x)}
            &\geq\liminf_{x\rightarrow\infty} \frac{\Prob(\tau_d(x)<\infty )}{H(x)}
            =\liminf_{x\rightarrow\infty} \frac{\Prob(\sup_{n\in\N}\xi(T_{2n})\geq x)}{H(x)}.
        \end{align*}
        However, $\xi_{T_{2n}}$ is a sum of the random variables $\xi_{T_{2m}}-\xi_{T_{2(m-1)}}$ which are i.i.d. copies of $\xi_{T_2}$. Since $Y$ is of strong subexponential type, the integrated tail, $H$, of $\xi_{T_2}$ is long-tailed and we can apply Veraverbeke's theorem \cite[pp 2, Theorem 1(i)]{ZacharyVeraverbeke} to conclude that
        \begin{align*}
            \liminf_{x\rightarrow\infty}\frac{\Prob\left(\sup_{n\in\N}\xi(T_{2n}) \geq x\right)}{H(x)} \geq \frac{1}{|\E[\xi_{T_2}]|}
            = \frac{1}{|K|\E[T_2]}.
        \end{align*}
    \end{proof}

\begin{rmk}
 The results of this paper are presented only for settings where $|E| = 2$. However, they can easily be extended to the case of any finite $E$ provided the modulating Markov chain $J$ is irreducible. In the proofs, extensive use is made of the fact that $J_{T_{2n}} = J_0$ for all $n\in N$. To extend this to the case $|E| > 2$, we replace $\{T_{2n}\}_{n\in N }$ with the sequence of return times to $J_0$ of $J$, which have finite expectation. In the case $|E| =\infty$, two further difficulties arise which may prevent an extension of the results. Firstly, even if $J$ is a recurrent Markov chain, it may be the case that the expected return time of $J$ is infinite. Secondly, arguments which rely on taking  maximums or sums over the elements of $E$ are no longer valid. The reader may also be interested by the work in \cite{Beheme-sideris} where a necessary and sufficient condition for finiteness of  $\int_0^{\infty} e^{\xi_s}d\eta_s $, where $(\xi, \eta)$ is a bivariate Markov additive process with some modulating Markov chain $J$, is  given.
\end{rmk}
{\bf Acknowledgements:} We are grateful to the anonymous referees for their careful reading of our manuscript and their valuable comments which helped improving the paper.

  \end{document}